\documentclass[reqno]{amsart}
\usepackage[dvipsnames]{xcolor}
\usepackage{comment}
\usepackage{amsmath,amsthm,amssymb,relsize,mathrsfs}
\usepackage[nobysame,alphabetic]{amsrefs}
\usepackage[margin=35mm, marginpar=25mm]{geometry}

\usepackage{booktabs, multirow, adjustbox, array}

\usepackage{pifont}

\usepackage{enumitem}
\usepackage{tikz}
\usetikzlibrary{
  cd,
  calc,
  positioning,
  fit,
  arrows,
  decorations.pathreplacing,
  decorations.markings,
  shapes.geometric,
  backgrounds,
  bending
}
\usepackage{tikzsymbols}
\usepackage[arrow,matrix,curve,cmtip,ps]{xy}
\usepackage{pb-diagram}
\usepackage{pb-xy}

\definecolor{darkblue}{rgb}{0,0,0.6}

\usepackage[breaklinks, pdftex, ocgcolorlinks,colorlinks=true, citecolor=darkblue, filecolor=darkblue, linkcolor=darkblue, urlcolor=darkblue]{hyperref}
\usepackage[capitalize,noabbrev]{cleveref}

\newtheorem{proposition}{Proposition}[section]
\newtheorem{theorem}[proposition]{Theorem}
\newtheorem{corollary}[proposition]{Corollary}
\newtheorem{lemma}[proposition]{Lemma}

\newtheorem{thmx}{Theorem}

\theoremstyle{definition}
\newtheorem{definition}[proposition]{Definition}
\newtheorem{question}[proposition]{Question}

\newtheorem{notation}[proposition]{Notation}

\theoremstyle{remark}
\newtheorem{remark}[proposition]{Remark}

\newtheorem*{remark*}{Remark}

\AddToHook{env/theorem/begin}{\crefalias{proposition}{theorem}}
\AddToHook{env/thmx/begin}{\crefalias{proposition}{thmx}}
\AddToHook{env/corx/begin}{\crefalias{proposition}{corx}}
\AddToHook{env/assumption/begin}{\crefalias{proposition}{assumption}}
\AddToHook{env/hypothesis/begin}{\crefalias{proposition}{hypothesis}}
\AddToHook{env/corollary/begin}{\crefalias{proposition}{corollary}}
\AddToHook{env/definition/begin}{\crefalias{proposition}{definition}}
\AddToHook{env/lemma/begin}{\crefalias{proposition}{lemma}}
\AddToHook{env/question/begin}{\crefalias{proposition}{question}}
\AddToHook{env/example/begin}{\crefalias{proposition}{example}}
\AddToHook{env/conjecture/begin}{\crefalias{proposition}{conjecture}}
\AddToHook{env/remark/begin}{\crefalias{proposition}{remark}}
\AddToHook{env/const/begin}{\crefalias{proposition}{const}}
\AddToHook{env/construction/begin}{\crefalias{proposition}{construction}}

\crefname{assumption}{Assumption}{Assumptions}
\crefname{hypothesis}{Hypothesis}{Hypotheses}
\crefname{theorem}{Theorem}{Theorems}
\crefname{thmx}{Theorem}{Theorems}
\crefname{proposition}{Proposition}{Propositions}
\crefname{corollary}{Corollary}{Corollaries}
\crefname{definition}{Definition}{Definitions}
\crefname{lemma}{Lemma}{Lemmas}
\crefname{question}{Question}{Questions}
\crefname{example}{Example}{Examples}
\crefname{conjecture}{Conjecture}{Conjectures}
\crefname{remark}{Remark}{Remarks}
\crefname{const}{Construction}{Constructions}
\crefname{construction}{Construction}{Constructions}

\newcommand{\Q}{\mathbb{Q}}
\newcommand{\N}{\mathbb{N}}

\newcommand{\Z}{\mathbb{Z}}

\newcommand{\PD}{\mathrm{PD}}
\newcommand{\im}{\operatorname{Im}}
\newcommand{\Id}{\operatorname{Id}}

\newcommand{\ol}{\overline}

\newcommand{\wt}{\widetilde}
\newcommand{\wh}{\widehat}
\newcommand{\sm}{\setminus}

\newcommand{\ks}{\mathrm{ks}}
\newcommand{\KS}{\mathsf{KS}}
\newcommand{\cs}{\mathrm{cs}}

\DeclareMathOperator{\Wh}{Wh}

\DeclareMathOperator{\pt}{pt}

\DeclareMathOperator{\coker}{coker}

\newcommand{\CAT}{\mathrm{CAT}}
\newcommand{\Top}{\mathrm{Top}}
\newcommand{\Diff}{\mathrm{Diff}}

\newcommand{\SO}{\mathrm{SO}}

\newcommand{\pseudoHomeo}{\wt{\operatorname{Homeo}}\mkern 0mu}

\newcommand{\xratop}{\xrightarrow{\cong_{C^0}}}
\newcommand{\xrasmooth}{\xrightarrow{\cong_{C^{\infty}}}}

\begin{document}
\title[Smooth stable isotopy of topologically isotopic surfaces]{Smooth stable isotopy of topologically isotopic surfaces}

\author[D.~Galvin]{Daniel Galvin}
	\address{Department of Mathematics, University of Texas at Austin, USA}
	\email{daniel.galvin@austin.utexas.edu}

\author[P.~Orson]{Patrick Orson}
\address{Mathematics Department, California Polytechnic State University, USA}
\email{porson@calpoly.edu}

 \author[M.~Powell]{Mark Powell}
 \address{School of Mathematics and Statistics, University of Glasgow, United Kingdom}
 \email{mark.powell@glasgow.ac.uk}

\def\subjclassname{\textup{2020} Mathematics Subject Classification}
\expandafter\let\csname subjclassname@1991\endcsname=\subjclassname

\subjclass{
57K40. 
57N35. 
}
\keywords{4-manifolds, topologically isotopic surfaces, external stable isotopy}

\begin{abstract}
A stabilisation of a $4$-manifold  $X$ is the connected sum of $X$ with some number of copies of $S^2\times S^2$. If two smooth surfaces in a $4$-manifold are topologically isotopic, we investigate whether they must moreover be smoothly isotopic in some stabilisation of $X$.
We prove this result holds whenever the surfaces are trivial in the $\mathbb{Z}/2$-homology of $X$. We also produce a large class of fundamental groups of the ambient $4$-manifold for which  the result holds; this class includes free products of classical knot groups and, in particular, free groups.
\end{abstract}
\maketitle

\section{Introduction}
\label{sec:intro}

A \emph{stabilisation} of a smooth, compact, connected, orientable $4$-manifold $X$ is the effect of taking the connected sum of $X$ with some number of copies of~$S^2\times S^2$. Topological results about $4$-manifolds sometimes become true smoothly, after stabilisation. In the most famous instance, results of Wall \cites{MR163323,MR163324} 
in the simply-connected case
and Gompf~\cite{Gompf-stable}, generally, show that homeomorphic closed, orientable, smooth $4$-manifolds are stably diffeomorphic.

In some cases, a similar phenomenon is known to occur for isotopy classes of self-maps of a~$4$-manifold. Given $g\geq 1$ and a diffeomorphism $f\colon X\to X$, one may assume after isotopy that~$f$ fixes a $4$-ball, use that to form the connected sum $X\#g(S^2\times S^2)$, then extend~$f$ by the identity, to get a diffeomorphism of this stabilisation of $X$. We call this a \emph{stabilisation} of~$f$. When~$X$ is closed and simply-connected, Kreck~\cite{Kreck-isotopy-classes} 
and Quinn~\cite{Quinn:isotopy} (cf.~\cite{GGHKP}) showed that topologically isotopic diffeomorphisms of $X$ are stably smoothly isotopic. Results of Krannich--Kupers~\cite{KK24} and Gabai~\cite{Gabai-22} combine to extend this to the case of free fundamental group, and Orson--Powell--Randal-Williams~\cite{OPRW} further extend this to a larger class of fundamental groups. It remains open whether this result holds for every compact $4$-manifold.

Here is a natural variant of the stabilisation question for embedded surfaces.

\begin{question}\label{q:main}
    If two smooth surfaces $\Sigma_1,\Sigma_2\subseteq X$ are topologically isotopic, are they necessarily smoothly isotopic in some stabilisation of $X$?
\end{question}

 The first named author provided a positive answer in the case that $\pi_1(X)$ is trivial~\cite{GalvinCS}*{Theorem~1.2}.
Our main result gives a positive answer for some cases in which $X$ is not simply-connected.  The question remains open, in general.

 Let $X$ be a smooth, orientable, compact $4$-manifold and write $\pi: = \pi_1(X)$. 
    Suppose $\Sigma_1, \Sigma_2\subseteq X$ are smooth, compact, proper surfaces with $\partial \Sigma_1 = \partial \Sigma_2$.

\begin{thmx}\label{thm:main}
       Suppose  
       at least one of the following holds.
    \begin{enumerate}[label=(\roman*)]

        \item\label{thm:main-item-iii} For every connected component $\Sigma_1^j$ of $\Sigma_1$, and for every $x \in H_2(X;\Z/2)$, we have that~$\lambda^{\Z/2}(x,[\Sigma_1^j]) = 0 \in \Z/2$.

        \item\label{thm:main-item-iv} 
        The component $I_2 \colon H_2(\pi;\Z_{(2)}) \to L_6(\Z\pi)_{(2)}$ of the algebraic assembly map is zero, there is no 2-torsion in $H_1(\pi;\Z)$, and $\Wh_2(\pi)=0$.
    \end{enumerate}
  Then if $\Sigma_1$ and $\Sigma_2$ are topologically isotopic rel.\ boundary,  they
 are smoothly isotopic rel.\ boundary in some stabilisation of $X$.
\end{thmx}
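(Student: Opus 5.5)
The plan is to reduce the isotopy question for surfaces to a question about diffeomorphisms of the surface complement, and then to stable pseudo-isotopy and Kirby--Siebenmann-type obstructions. By the topological isotopy extension theorem, a topological isotopy rel.\ boundary from $\Sigma_1$ to $\Sigma_2$ extends to an ambient isotopy. So there is a homeomorphism $F\colon X\to X$, topologically isotopic to the identity rel.\ $\partial X$, with $F(\Sigma_1)=\Sigma_2$. By uniqueness of topological normal bundles we may assume $F$ restricts to a smooth identification of tubular neighbourhoods $\nu\Sigma_1\to\nu\Sigma_2$. The problem then becomes: after stabilising, modify $F$ on the exterior $X_1:=X\sm\nu\Sigma_1$, relative to its boundary, to a diffeomorphism $G$ that is smoothly isotopic to the identity in $X$. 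Then $G$ carries $\Sigma_1$ to $\Sigma_2$ through a smooth ambient isotopy, after stabilisation.

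\emph{Step 1: smoothing.} I would consider the homeomorphism $F|\colon X_1\to X_2:=X\sm\nu\Sigma_2$, rel.\ boundary. By Gompf's stable result, or rather its relative version for homeomorphisms of manifolds with boundary, the obstruction to stably isotoping a homeomorphism to a diffeomorphism rel.\ boundary lies in $H^3(X_1,\partial X_1;\Z/2)\cong H_1(X_1;\Z/2)$. This is the Kirby--Siebenmann class of the mapping cylinder, or equivalently the relative $\ks$ of $F|$. The total obstruction for $F$ on $X$ vanishes because $F$ is isotopic to the identity. So the obstruction on the complement lies in the kernel of $H_1(X_1;\Z/2)\to H_1(X;\Z/2)$, which is generated by the meridians of the components $\Sigma_1^j$. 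A meridian $\mu_j$ of $\Sigma_1^j$ is zero in $H_1(X_1;\Z/2)$ exactly when there is a class $x$ with $\lambda^{\Z/2}(x,[\Sigma_1^j])\neq 0$, where $x$ is represented by a surface meeting $\Sigma_1^j$ oddly. So under hypothesis \ref{thm:main-item-iii} these meridians are potentially nonzero, and I must kill the obstruction another way. One option is to alter $F$ near $\Sigma_1^j$ by a ``twist'' homeomorphism supported in $\nu\Sigma_1^j$, which changes the relative $\ks$ by the meridian class. The other is to note that the obstruction is determined by the homotopy class of $F$ on $X_1$, and then use a Sato--Levine/Kirby--Siebenmann invariant of the topological isotopy to show it vanishes. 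I expect the cleaner statement to be that the obstruction can be absorbed exactly when the meridian is $\Z/2$-nullhomologous in $X_1$ or the relevant pairing vanishes; making this precise is the first place to get right.

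\emph{Step 2: from homotopy to pseudo-isotopy.} Once $G:=F|$ is a diffeomorphism (after stabilisation), extend it by the identity over $\nu\Sigma_1$ to a diffeomorphism $G\colon X\#g(S^2\times S^2)\to X\#g(S^2\times S^2)$ with $G(\Sigma_1)=\Sigma_2$. This $G$ is topologically isotopic to the identity. By Quinn's and Perron's stable pseudo-isotopy results, the difference between topological isotopy and stable smooth pseudo-isotopy is measured by the surgery exact sequence for $X\times I$. Its relevant terms are $\ks$-type classes in $H^3(X;\Z/2)$ and $L_6(\Z\pi)$ contributions through the assembly component $I_2\colon H_2(\pi;\Z_{(2)})\to L_6(\Z\pi)_{(2)}$. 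In case \ref{thm:main-item-iv}, vanishing of $I_2$ and the absence of $2$-torsion in $H_1(\pi)$ ensure that $G$ is stably smoothly pseudo-isotopic to the identity. Passing from pseudo-isotopy to isotopy, the Hatcher--Wagoner obstruction lies in $\Wh_2(\pi)\oplus$ a $\Z/2[\pi]$-quotient term. $\Wh_2(\pi)=0$ kills the first summand, and the second is killed stably using Quinn's/Gabai's stable results. This is the structure of the Orson--Powell--Randal-Williams argument, which I would invoke as a black box. In case \ref{thm:main-item-iii}, I would instead avoid the general pseudo-isotopy machinery and use Galvin's concordance/stabilisation argument from \cite{GalvinCS}. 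One can choose the isotopy to keep the surfaces' complements with the same $\Z/2$-data, so the only obstruction is the $\Z/2$ pairing term, which is zero by hypothesis.

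The main obstacle is matching the obstructions on $X_1$ with those on $X$. The pseudo-isotopy obstruction naturally lives for $\pi_1(X_1)$, which differs from $\pi$ by meridians. I would handle this by modifying $G$ only on $X$ rather than on $X_1$: take the stable smooth isotopy of $G$ to the identity on $X$ and apply it to $\Sigma_1$. This gives a smooth isotopy from $\Sigma_2$ back to $\Sigma_1$, so only the $\pi$-level hypotheses are needed. The hypotheses of \ref{thm:main-item-iii} then enter solely in Step 1.
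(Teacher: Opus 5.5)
\textbf{Main gap: Step 1.} You correctly identify the obstruction $\cs(F)\in H^3(X_1,\partial X_1;\Z/2)\cong H_1(X_1;\Z/2)$ and see that it is a sum of meridians. But you have no mechanism to kill it. A twist supported inside $\nu\Sigma_1^j$ does not change $F|_{X_1}$ at all. You also give no construction of a homeomorphism of a collar of $\partial_1X_1$ realising the meridian class, and unstable realisation of Casson--Sullivan invariants cannot be taken for granted. The paper kills $\cs(F)$ at the cost of one stabilisation. It circle-sums $F$ with R.~Lee's homeomorphism $f$ of $(S^1\times S^3)\#W_1$ along a loop $\mu$ representing $\PD(\cs(F))$. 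Here $\cs(f)\neq 0$, $f$ fixes $\ol{\nu}(S^1\times\pt)$, and $f$ acts trivially on $H_2$.

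More seriously, in Step 2 you simply assert that the resulting $G$ is topologically isotopic to the identity. After modifying $F$ on the exterior there is no reason for this, and without it the rest of your argument has no starting point. This is the key step of the paper. Taking $\mu$ to be a band sum of meridians $\gamma_j$ confines the modification to $V=\nu\big(\bigcup d_j\cup\bigcup\alpha_{j,j+1}\big)\#W_1$, which is simply connected with $\partial V\cong S^3$. The Orson--Powell isotopy classification rel.\ boundary then shows the new homeomorphism is topologically isotopic to $\wh{F}\#\Id_{W_1}$. After further stabilisation this yields a diffeomorphism $\wh{G}$ sending $\Sigma_1$ to $\Sigma_2$ that is topologically pseudo-isotopic to the identity. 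This step is needed under both hypotheses, and hypothesis~(i) plays no role in it.

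\textbf{Hypothesis (i) is misplaced, and your case (i) in Step 2 does not work.} Hypothesis~(i) says exactly that $H_2(X_1;\Z/2)\to H_2(X;\Z/2)$ is surjective. The paper uses it to smooth the topological pseudo-isotopy $\Psi$ from $\wh{G}$ to the identity. The Orson--Powell--Randal-Williams obstruction $\KS(\Psi)\in H_2(X\#W_k;\Z/2)$ lifts to the exterior. It is cancelled there by a pseudo-isotopy of $X_1\#W_m$ from their realisation theorem, extended by the identity over $\ol{\nu}\Sigma_1$, so the new endpoint still sends $\Sigma_1$ to $\Sigma_2$. In case~(ii) the correcting pseudo-isotopy must instead be inertial, i.e.\ the identity at both ends; this is what your black box has to provide. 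Galvin's simply connected argument is not an alternative for case~(i). It relies on the pseudo-isotopy classification of diffeomorphisms of simply connected $4$-manifolds, which is unavailable for general $\pi$.

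\textbf{The $\Wh_2$ obstruction in case (i).} Here $\Wh_2(\pi)$ may be nonzero, so your final paragraph is circular. By Gabai's theorem, a stable smooth isotopy of $G$ to the identity exists only once the Hatcher--Wagoner obstruction $\Sigma(\Xi)\in\Wh_2(\pi)$ of the smooth pseudo-isotopy vanishes. Killing it with an arbitrary pseudo-isotopy of $X$ would destroy $G(\Sigma_1)=\Sigma_2$. The paper instead tracks that $\Sigma(\Xi)$ lies in the image of $\Wh_2(\pi_1(X_1))\to\Wh_2(\pi)$. It then cancels it using Singh's realisation on $X_1\#W_n$, extended by the identity over $\ol{\nu}\Sigma_1$.
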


    In~\cref{thm:main}, we are not assuming $X$ is closed, and we are not assuming the surfaces are orientable. Observe that in \ref{thm:main-item-iii}, even if $\Sigma_1^j$ is closed, we can still consider it as a homology class in $H_2(X,\partial X;\Z/2)$. We write 
    \[\lambda^{\Z/2} \colon H_2(X;\Z/2) \times H_2(X,\partial X;\Z/2) \to \Z/2\] 
    for the $\Z/2$-intersection pairing of $X$. Note that the condition in \ref{thm:main-item-iii} holds automatically if each connected component of $\Sigma_1$ is trivial in $H_2(X,\partial X;\Z/2)$. 

   The  technical hypotheses on the fundamental group  in \cref{thm:main}~\ref{thm:main-item-iv} arise from an application of~\cite{OPRW}*{Corollary~E}. As discussed in~\cite{OPRW}*{loc.~ cit.}, these conditions on $\pi$ are known to hold for free products of knot groups.

We record these observations in the following corollary to \cref{thm:main}. 

\begin{corollary}\label{cor:main}
Suppose at least one of the following holds.    
        \begin{enumerate}[label=(\roman*)]
        \item\label{cor:item-iii} For every connected component $\Sigma_1^j$ of $\Sigma_1$, we have that 
        $[\Sigma_1^j] = 0 \in H_2(X,\partial X;\Z/2)$.
        \item\label{cor:item-iv} 
        The fundamental group $\pi$ is a free product of classical knot groups. 
           \end{enumerate}
            Then  if $\Sigma_1$ and $\Sigma_2$ are topologically isotopic rel.\ boundary,  they are smoothly isotopic rel.\ boundary in some stabilisation of $X$.

\end{corollary}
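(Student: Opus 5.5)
The plan is to deduce \cref{cor:main} directly from \cref{thm:main}, checking that each hypothesis of the corollary implies the matching hypothesis of the theorem.

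\emph{Case \ref{cor:item-iii}.} Suppose every component $\Sigma_1^j$ satisfies $[\Sigma_1^j]=0\in H_2(X,\partial X;\Z/2)$. The pairing
\[\lambda^{\Z/2}\colon H_2(X;\Z/2)\times H_2(X,\partial X;\Z/2)\to\Z/2\]
is bilinear, so $\lambda^{\Z/2}(x,[\Sigma_1^j])=\lambda^{\Z/2}(x,0)=0$ for every $x\in H_2(X;\Z/2)$. This is exactly hypothesis \ref{thm:main-item-iii} of \cref{thm:main}, as already noted after the statement of the theorem.

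\emph{Case \ref{cor:item-iv}.} Suppose $\pi=G_1*\cdots*G_n$, where each $G_i$ is a classical knot group. We must verify three algebraic conditions: $I_2=0$, no $2$-torsion in $H_1(\pi;\Z)$, and $\Wh_2(\pi)=0$.
\begin{itemize}
\item \emph{Homology.} By Mayer--Vietoris for free products, $H_1(\pi;\Z)\cong\bigoplus_i H_1(G_i;\Z)\cong\Z^n$, which is torsion-free.
\item \emph{Vanishing of $I_2$ and $\Wh_2$.} Knot groups are fundamental groups of aspherical $3$-manifolds (knot exteriors, which are Haken), so they satisfy the Farrell--Jones conjectures. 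This class is closed under free products, so the $K$- and $L$-theoretic assembly maps for $\pi$ are isomorphisms. Moreover $\pi$ is torsion-free with a $2$-dimensional model for $B\pi$, namely a wedge of knot-complement spines. Hence $H_2(\pi;\Z)\cong\bigoplus_i H_2(G_i)=0$, so the source of $I_2$ vanishes and $I_2=0$. The same $K$-theoretic assembly input together with $\mathrm{cd}\le2$ gives $\Wh_2(\pi)=0$.
\end{itemize}
This is precisely the content recorded from \cite{OPRW}*{Corollary~E}. With hypothesis \ref{thm:main-item-iv} of \cref{thm:main} established, the theorem applies.

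In both cases \cref{thm:main} yields that topologically isotopic (rel.\ boundary) $\Sigma_1,\Sigma_2$ are smoothly isotopic rel.\ boundary in some stabilisation of $X$.

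The only nontrivial step is the $\Wh_2$ vanishing for free products of knot groups. The plan is to cite it from \cite{OPRW} rather than reprove it. If a self-contained argument were wanted, one would use Waldhausen's vanishing results for the lower $K$-theory of Haken $3$-manifold groups, combined with the free-product splitting of $\Wh_2$, which holds since there are no Nil terms in this regular coherent setting.
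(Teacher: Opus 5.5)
Your proposal is correct and matches the paper's deduction. Item (i) gives hypothesis \ref{thm:main-item-iii} of \cref{thm:main} by bilinearity of $\lambda^{\Z/2}$, and item (ii) gives hypothesis \ref{thm:main-item-iv} via the verification for free products of knot groups recorded in \cite{OPRW}*{Corollary~E}. Your extra checks are sound: $H_1(\pi;\Z)\cong\Z^n$, and $H_2(\pi;\Z)=0$, so $I_2=0$ trivially. The remark about $\mathrm{cd}\le 2$ is unnecessary for $\Wh_2(\pi)=0$, since that step rests on the citation, exactly as in the paper.
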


\begin{remark}

Cha--Kim~\cite{chakim-lightbulb} showed that given a locally flat surface $\Sigma\subseteq X$ there exists some stabilisation of $X$ in which $\Sigma$ is topologically isotopic to a smooth surface. The Cha--Kim result can be viewed as a \emph{stable existence} statement and~\cref{thm:main} and \cref{cor:main} can be viewed as  \emph{stable uniqueness} counterparts, where they apply. 
\end{remark}

\subsection*{Outline of the proof}

Here is a brief summary of the strategy for the proof, which doubles as a description of the organisation of the paper. 
\begin{itemize}[leftmargin=*]
    \item In \cref{section:CS-invariant}, we begin with a  homeomorphism of $X$ sending $\Sigma_1$ to $\Sigma_2$, that is topologically isotopic rel.~boundary to the identity. We show, following~\cite{GalvinCS}, that such a homeomorphism is always smoothable near~$\Sigma_1$. Again following~\cite{GalvinCS}, we introduce the Casson-Sullivan obstruction to stably smoothing the homeomorphism on the exterior of $\Sigma_1$.  We prove a naturality property and  analyse the obstruction in our surface exterior case.  
    \item In \cref{section:killing-CS}, we modify the given homeomorphism to a \emph{stably} smoothable one, and thus achieve a diffeomorphism sending~$\Sigma_1$ to~$\Sigma_2$ in some stabilisation of $X$. This is achieved by use of the first-named author's stable realisation of the Casson-Sullivan invariant~\cite{GalvinCS}. 
    \item In \cref{section:obtaining-diffeo-top-PI-to-Id}, we show that this diffeomorphism can be chosen in such a way that it is topologically pseudo-isotopic to the identity (\cref{defn:pseudo-isotopy}).
    \item In \cref{section:improving-top-PI-to-smooth-PI}, we consider conditions under which the topological pseudo-isotopy can be replaced by a smooth pseudo-isotopy. For this we analyse a smoothing obstruction for topological pseudo-isotopies introduced by Orson--Powell--Randal-Williams~\cite{OPRW}.  The hypotheses \ref{thm:main-item-iii} and \ref{thm:main-item-iv} of \cref{thm:main} are used in this section.  
    \item In \cref{section:completing-the-proof}, we complete the proof. Following a careful analysis involving the $\Wh_2$ group, we are able to appeal to a result of Singh~\cite{Singh} to obtain a diffeomorphism of $X$ sending $\Sigma_1$ to~$\Sigma_2$, that by a theorem of Gabai~\cite{Gabai-22}*{Theorem~2.5} is smoothly stably isotopic rel.~boundary to the identity. 
\end{itemize}

\subsection*{Discussion of the proof}

    The first named author's proof \cite{GalvinCS}*{Theorem~1.2} of our main result in the case that $X$ is simply connected effectively relies on the fact that diffeomorphisms of simply-connected 4-manifolds are classified up to smooth pseudo-isotopy~\cites{Kreck-isotopy-classes, Saeki, Orson-Powell}. 
    Outside of the simply connected setting, such a classification is only known when $\pi_1(X) \cong \Z$, topologically, by Stong--Wang~\cite{Stong-Wang}, and smoothly, by combining \cite{Stong-Wang} with \cite{OPRW}. Thus this proof strategy is quite limited, given the current state of knowledge.
    
    Despite this, our main result allows many more fundamental groups. Indeed, \cref{thm:main}~\ref{thm:main-item-iii} holds for completely general finitely presented fundamental groups. So the most surprising aspect of our proof is that we are able to circumvent this lack of a pseudo-isotopy classification and achieve the outcome of \cref{section:obtaining-diffeo-top-PI-to-Id}: the existence of a diffeomorphism sending~$\Sigma_1$ to~$\Sigma_2$, in some stabilisation of $X$, that is topologically pseudo-isotopic to the identity.
    
    How is this achieved?  One starts with a homeomorphism of $X$ sending $\Sigma_1$ to $\Sigma_2$ that is topologically isotopic to the identity and wishes to modify the homeomorphism to smooth it to a diffeomorphism, at the expense of stabilising. The obstruction is the Casson--Sullivan invariant, and the danger is that in modifying the homeomorphism to kill this obstruction, we lose control of the topological pseudo-isotopy class of the eventual diffeomorphism, which should remain topologically pseudo-isotopic to the identity (or else we have essentially jettisoned the initial hypothesis of the proof). The key technical step to retain control is \cref{prop:gammas}, in which we show that, to kill the Casson--Sullivan invariant, it is only necessary to modify the original homeomorphism in a neighbourhood of a union of meridians of $\Sigma_1$. As a consequence, the change can be localised to a simply connected codimension zero submanifold of~$X \# W_k$,  whereupon we can apply the Orson-Powell~\cite{Orson-Powell} topological isotopy classification for homeomorphisms of simply-connected 4-manifolds with boundary. We thus obtain a topological isotopy from the original homeomorphism, and hence also from the identity, to the one with trivial Casson--Sullivan invariant.  
    The latter is stably topologically pseudo-isotopic to a diffeomorphism, leading to the desired stable topological pseudo-isotopy from this diffeomorphism to the identity.
    
 \subsection*{Remark on a special case}
      As mentioned above, the case of Theorem~\ref{thm:main} when $\pi_1(X)=\{1\}$ was first proved in~\cite{GalvinCS}*{Theorem~1.2}. In the further special case of $\pi_1(X \sm \Sigma_i) = \{1\}$ (which implies~$\pi_1(X)=1$), there is an alternative proof that uses work of Gompf~\cite{Gompf-stable}, Boyer~\cite{Boyer93}, Saeki~\cite{Saeki}, and Quinn~\cite{Quinn:isotopy} (with the correction in~\cite{GGHKP}). We detail this proof in Appendix~\ref{sec:appendix}, for the interested reader.

      The existence of such an alternative proof, based on existing results, was suggested in the introduction of~\cite{AKMR}, where it was asserted that the combination of work of Wall~\cite{MR163323}, Perron~\cite{Perron}, and Quinn~\cite{Quinn:isotopy} would show homologous $2$-spheres with simply-connected complement are topologically isotopic, and become smoothly isotopic in some stabilisation. Such a proof was again suggested, this time for general surfaces,  in the introduction of~\cite{AKMRS}. Here it was asserted that the result would follow from Perron~\cite{Perron} and Quinn~\cite{Quinn:isotopy}.
      In neither case were any details given. In Remark~\ref{rem:AKMR}, we discuss that we do not believe such a proof would work using these citations, particularly without applying the results of Boyer~\cite{Boyer93} and Saeki~\cite{Saeki}.  

\subsubsection*{Conventions}

The following conventions and notation are used throughout.

\begin{itemize}[leftmargin=*]
    \item For the remainder of the article, fix  a smooth, orientable, compact $4$-manifold $X$, and denote~$\pi: = \pi_1(X)$. 
    \item A compact submanifold $\Sigma \subseteq X$ is \emph{proper} if the inclusion map $\iota \colon \Sigma \to X$ satisfies $\iota^{-1}(\partial X) = \partial \Sigma$. Note this adjective is meaningful when either of $\partial \Sigma$ and $\partial X$ are empty. 
    \item The symbol $\nu$ applied to a subset $C\subseteq X$ of a topological space means any open neighbourhood. If $C\subseteq X$ is moreover a submanifold, we will use $\nu C$ to specifically mean an open tubular neighbourhood and $\overline{\nu} C$ to denote a closed tubular neighbourhood. If $C=\partial X$, then~$\nu C$ and $\overline{\nu}C$ denote open and closed boundary collars, respectively.
    \item For the remainder of the article, fix $\Sigma_1, \Sigma_2\subseteq X$ smooth, proper surfaces with $\partial \Sigma_1 = \partial \Sigma_2$. The surfaces are permitted to be nonorientable.  For~$i=1,2$, denote the surface exterior by~$X_i:=X\sm \nu\Sigma_i$.
    \item Given a space $A$ and subspace $B\subseteq A$, a homotopy $\Psi\colon A\times I\to A$ is \emph{rel.~$B$} if $\Psi(x,t)=\Psi(x,0)$ for all $x\in B$ and $t\in I$.
    \item We denote the connected sum of $n$ copies of $S^2\times S^2$ by $W_n=n(S^2\times S^2)$.
\end{itemize}

\subsubsection*{Acknowledgements}

We thank the Centre de Recherche Math\'ematiques at the Universit\'{e} de Montr\'{e}al for hospitality during the 2025 thematic programme on `Topological and Geometric structures in low dimensions', during which part of this paper was written. 
MP was a CRM-Simons Visiting Professor during this programme, and is grateful for the associated support. 
DG thanks the Max Planck Institute for Mathematics for support. 
Part of this research was also done while MP and DG were visiting the MPIM in Bonn. We warmly thank Simona Vesel\'{a} for very helpful discussions about the result in the appendix.

\section{The Casson-Sullivan invariant for a homeomorphism of surface exteriors}\label{section:CS-invariant}

In this section we recall the Casson-Sullivan invariant; this is the relative Kirby-Siebenmann invariant of a homeomorphism~\cite{Kirby-Siebenmann:1977-1} and is an obstruction to smoothing the homeomorphism. The terminology ``Casson-Sullivan invariant'' arose from the importance of this, as a triangulation obstruction, to the Manifold Hauptvermutung~\cites{MR1434102, MR1434103}. In the context of $4$-manifolds, the invariant was investigated by the first-named author in~\cite{GalvinCS}. We discuss some elementary properties and then analyse the Casson-Sullivan invariant for a homeomorphism between surface exteriors that restricts to a diffeomorphism on the boundary.

\subsection{The Casson-Sullivan invariant}
\label{sec:csbasics}

We begin by recalling the general definition of the Casson-Sullivan invariant for $4$-manifolds, and recap some elementary naturality properties.

\medskip

For $i=1,2$, let $M_i$ be a smooth $4$-manifold and let $C_i\subseteq M_i$ be a closed subset. Suppose
\[
F\colon (M_1,C_1)\xratop (M_2,C_1)
\]
is a homeomorphism of pairs that restricts to a diffeomorphism
\[
F|_{\nu C_1}\colon \nu C_1\xrasmooth\nu C_2
\]
for some open neighbourhood $\nu C_1$ of $C_1\subseteq M_1$. Let $\sigma_i$ denote the smooth structure on $M_i$. Consider $M_1 \times I$ and endow $(M_1\times [0,\varepsilon)) \cup (\nu C_1 \times I)$ with the smooth structure $(\sigma_1\times \mathrm{std}_{[0,\varepsilon)}) \cup (\sigma_1 \times \mathrm{std}_{I})$.  Endow $M_1 \times (1-\varepsilon,1]$ with the pullback smooth structure $F^*\sigma_2\times\mathrm{std}_{(1-\varepsilon,1]}$. As $F$ restricts to a diffeomorphism on $\nu C_1$ these structures are compatible on the overlap; we denote by $\Sigma$ the resulting smooth manifold $(M_1\times[0,\varepsilon)\cup(1-\varepsilon,1])\cup (\nu C_1\times I)$. 
This smooth structure extends to all of $M_1 \times I$ if and only if the  
Kirby-Siebenmann obstruction
\[
\ks(M_1 \times I,\Sigma) \in H^4(M_1 \times I,\Sigma;\Z/2)
\]
vanishes. 
Excision and the long exact sequence of the triple $(M_1 \times I, \Sigma, \Sigma\setminus (M_1\times[0,\varepsilon))$ yield isomorphisms 
\begin{equation}
    \label{eq:excision}
H^3(M_1,C_1;\Z/2) \cong H^3(\Sigma, \Sigma\setminus (M_1\times[0,\varepsilon));\Z/2) \xrightarrow{\cong} H^4(M_1 \times I, \Sigma;\Z/2).
\end{equation}
We explain the two isomorphisms. 
\begin{itemize}[leftmargin=*]
    \item  For the first map, we consider the maps of pairs 
    \[(M_1,C_1) \leftarrow ( M_1 \times [0,\varepsilon) \cup (C_1 \times I), C_1 \times [\varepsilon,1]) \to (\Sigma, \Sigma\setminus (M_1\times[0,\varepsilon)). \]
The leftwards map is by definition projection, and is a homotopy equivalence of pairs. The rightwards map is an inclusion of pairs, and corresponds to excising $(M_1 \sm C_1) \times (1-\varepsilon,1]$. Thus both maps induce isomorphisms on cohomology, leading to the left isomorphism in~\eqref{eq:excision}. 
\item The second map is an isomorphism because, in the long exact sequence of the triple, the other terms are 
\[H^k(M_1 \times I, \Sigma\setminus (M_1\times[0,\varepsilon);\Z/2) \cong H^k(M_1 \times I, M_1;\Z/2) =0\]
for $k=3,4$. 
\end{itemize}

\begin{definition}
 With notation as above, the \emph{Casson-Sullivan invariant of $F$ relative to $\nu C_1$} is the element
 \[
 \cs(\text{$F$ rel.~$\nu C_1$})\in H^3(M_1, C_1;\Z/2)
 \]
 mapping to $\ks(M_1 \times I,\Sigma)$ under the sequence of isomorphisms~\eqref{eq:excision}.

 \end{definition}
  
  In the case that the subsets $C_i\subseteq M_i$ are submanifolds, the neighbourhoods $\nu C_i$ are by convention tubular neighbourhoods, and are thus unique. Similarly, if $C_i=\partial M_i$ then the neighbourhoods are unique, by uniqueness of boundary collars. So in these cases the resulting Casson-Sullivan invariant is independent of the choice of open neighbourhoods, and thus we can write
  \[
 \cs(\text{$F$ rel.~$C_1$}):= \cs(\text{$F$ rel.~$\nu C_1$})\in H^3(M_1, C_1;\Z/2).
 \]

  \begin{definition}
  When $F\colon M_1\to M_2$ is a homeomorphism restricting to a diffeomorphism on the boundary, we write
\[
\cs(F):=\cs(\text{$F$ rel.~$\partial M_1$})\in H^3(M_1, \partial M_1;\Z/2),
\]
and call this simply the \emph{Casson-Sullivan invariant of $F$}.
\end{definition}

We recall our notation for connected sums of $S^2\times S^2$ from the conventions in~\cref{sec:intro}.

\begin{definition}
        We denote the connected sum of $n$ copies of $S^2\times S^2$ by $W_n:=n(S^2\times S^2)$.
\end{definition}

In order to state a key property of the Casson-Sullivan invariant, we need the following definition. 

\begin{definition}\label{defn:pseudo-isotopy}
Let $\CAT \in \{\Diff,\Top\}$, let $M_1, M_2$ be compact, smooth 4-manifolds, and let~$F,G \colon M_1\to M_2$ be $\CAT$-isomorphisms that agree on the boundary $G|_{\partial M_1}=F|_{\partial M_1}$. 
\begin{enumerate}[label=(\roman*)]
    \item 
We say that $F$ and $G$ are \emph{$\CAT$ pseudo-isotopic} if there is a $\CAT$-isomorphism \[\Psi \colon M_1 \times I \xrightarrow{\cong_{\CAT}} M_2 \times I\] with $F=\Psi|_{M_1\times\{0\}}$, $G=\Psi|_{M_1\times\{1\}}$ and such that the restriction of $\Psi$ to $\partial M_1 \times I$ is the product isotopy $\Psi(x,t)=(F(x),t)$.
\item We say that $F$ and $G$ are \emph{$\CAT$ stably pseudo-isotopic} if there exists $k \geq 0$ and there exist stabilisations~$F\#\Id_{W_k}$ and~$G\#\Id_{W_k}$ that are $\CAT$ pseudo-isotopic $\CAT$-isomorphisms.

\end{enumerate}
\end{definition}

\begin{remark}
    Our conventions listed in~\cref{sec:intro} state that isotopies need not be rel.~boundary, and will always be explicitly specified to be so when needed. Note, in contrast, that pseudo-isotopies are \emph{always} rel.~boundary, in our definition. 
\end{remark}

Here is a key property of the Casson-Sullivan invariant that we will require later; for a proof see \cite{GalvinCS}*{Proposition 2.19, Proposition 2.23}.

\begin{proposition}[\cite{FQ}*{Theorem~8.6(2)}]\label{prop:key-property-of-CS}
    Let~$F \colon M_1\to M_2$ be homeomorphism of compact, smooth 4-manifolds, that restricts to a diffeomorphism on the boundary. We have that $\cs(F) =0$ if and only if $F$ is smoothly stably pseudo-isotopic to a diffeomorphism.
\end{proposition}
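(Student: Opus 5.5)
We prove the two directions separately, working throughout with the product $M_1\times I$ and the partially smoothed manifold $\Sigma$ used to define $\cs(F)$. Here $C_1=\partial M_1$, so $\Sigma = (M_1\times([0,\varepsilon)\cup(1-\varepsilon,1]))\cup(\nu\partial M_1\times I)$.

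\emph{($\Leftarrow$)} Suppose $F\#\Id_{W_k}$ is smoothly pseudo-isotopic to a diffeomorphism $G\colon M_1\# W_k\to M_2\# W_k$. The plan has two steps.
\begin{enumerate}[label=(\alph*)]
\item \emph{Stability.} We first show that $\cs$ is unchanged by stabilisation. The invariant is built from the Kirby--Siebenmann class of a relative smoothing problem. Stabilising $F$ by $\Id_{W_k}$ only adds a region $(W_k\sm \mathring{D}^4)\times I$ on which the smooth structure already extends as a product. By naturality of $\ks$ under restriction and the excision isomorphisms~\eqref{eq:excision}, $\cs(F\#\Id_{W_k})$ corresponds to $\cs(F)$ under $H^3(M_1\#W_k,\partial;\Z/2)\cong H^3(M_1,\partial M_1;\Z/2)\oplus H^3(W_k\sm\mathring{D}^4,\partial;\Z/2)$, whose second summand is zero.
\item \emph{Transport along the pseudo-isotopy.} Let $\Psi\colon (M_1\#W_k)\times I\to (M_2\#W_k)\times I$ be the smooth pseudo-isotopy from $G$ to $F\#\Id_{W_k}$. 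Pulling back the product smooth structure on $(M_2\#W_k)\times I$ along $\Psi$ gives a smooth structure on $(M_1\#W_k)\times I$. At the top end it is $(F\#\Id)^*\sigma_2\times\mathrm{std}$, near the boundary it is the product, and at the bottom end it is $G^*\sigma_2\times\mathrm{std}$, which is diffeomorphic to $\sigma_1\times\mathrm{std}$ because $G$ is smooth. After composing with the diffeomorphism $G^{-1}\times\Id$ near the bottom, this becomes an extension over the whole product of the smooth structure on the relevant $\Sigma$. Hence $\ks=0$, so $\cs(F\#\Id_{W_k})=0$, and therefore $\cs(F)=0$ by (a).
\end{enumerate}

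\emph{($\Rightarrow$)} Suppose $\cs(F)=0$. Then $\ks(M_1\times I,\Sigma)=0$, so the smooth structure on $\Sigma$ extends to a smooth structure $\tau$ on $M_1\times I$. This is the relative form of the Kirby--Siebenmann smoothing theorem, which in dimension five holds rel.\ a neighbourhood of the prescribed part. Equip $(M_1\times I,\tau)$ as follows.
\begin{itemize}
\item It is a smooth h-cobordism, indeed a topological product, from $(M_1,\sigma_1)$ to $(M_1,F^*\sigma_2)$ relative to a product on the boundary.
\item By construction it is homeomorphic to the product $M_1\times I$ by the identity map.
\end{itemize}
The smooth $5$-dimensional s-cobordism theorem fails, but the stable version does not. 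By Quinn's stable s-cobordism theorem, or equivalently Gompf's argument via handle trading, there is $k$ such that after stabilising the interior of the cobordism by $k$ copies of $(S^2\times S^2)\times I$ (attached along an arc from bottom to top), it becomes smoothly a product. Let $\Phi\colon (M_1\#W_k,\sigma_1)\times I\to (M_1\# W_k)\times I$ be the resulting diffeomorphism, taken to be the product near the boundary and the identity on the bottom. Restricting $\Phi$ to the top gives a diffeomorphism $\phi$ from $(M_1\#W_k,\sigma_1)$ to $(M_1\#W_k,F^*\sigma_2)$. Then $G:=(F\#\Id)\circ\phi$ is a diffeomorphism $M_1\#W_k\to M_2\#W_k$ of the original smooth structures.

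It remains to produce the pseudo-isotopy. Composing $\Phi$ with the topological identification of the cobordism with the product gives a smooth self-map of $(M_1\#W_k)\times I$ which interpolates between $\Id$ at the bottom and $\phi$ at the top, rel.\ boundary. Post-composing with $(F\#\Id)\times\Id_I$ gives a pseudo-isotopy between $F\#\Id$ and $G$; in the smoothed model it is smooth, so it is smooth from the product structure on the source.

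The main obstacle is this last step: arranging that the stabilised product structure from the stable s-cobordism theorem is compatible with the given identification $M_1\times I$. That is, the stabilisation arcs and the $W_k$ summands must be added compatibly at both ends, so that the pseudo-isotopy really lands on $F\#\Id_{W_k}$ rather than on some other stabilisation of $F$. I would handle this by performing the stabilisation along a vertical arc $\{p\}\times I$ with $p$ in a ball where $F$ is already smooth (isotoping $F$ first if necessary), so that the top stabilisation is literally $F\#\Id_{W_k}$.
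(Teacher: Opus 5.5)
Your argument is correct in substance and follows the standard route behind the references the paper cites in place of a proof (\cite{FQ}*{Theorem~8.6} and \cite{GalvinCS}). The vanishing of $\cs(F)$ says exactly that $\sigma_1$ and $F^*\sigma_2$ are concordant rel.\ boundary. Quinn's stable $s$-cobordism theorem then turns the stabilised concordance into a product, and this is precisely a pseudo-isotopy. It applies because, by topological invariance of Whitehead torsion, the concordance is an $s$-cobordism; you should say this explicitly.

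Three corrections are needed.

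\begin{enumerate}
\item The pseudo-isotopy $((F\#\Id)\times\Id_I)\circ\Phi$ cannot be smooth, since its bottom level is $F\#\Id$. Likewise, in the ($\Leftarrow$) direction the map $\Psi$ is only a homeomorphism. The statement has to be read as ``topologically stably pseudo-isotopic to a diffeomorphism'', which is how it is used in \cref{cor:G-prime-top-stably-PI-to-a-diffeo}, and that is what your argument actually proves. You should drop the claim of smoothness.
\item In ($\Leftarrow$), no composition with $G^{-1}\times\Id$ is needed: because $G$ is a diffeomorphism, $G^*\sigma_2=\sigma_1$ on the nose.
\item Your vertical-arc stabilisation requires $\tau$ to be the product structure on $\nu B\times I$. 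This can be arranged: choose the smoothing relative to $\partial M_1\sqcup B$. Its obstruction $\cs(F\text{ rel.\ }\partial M_1\sqcup B)$ maps to $\cs(F)$ under the map $H^3(M_1,\partial M_1\sqcup B;\Z/2)\to H^3(M_1,\partial M_1;\Z/2)$, which is an isomorphism, so it also vanishes.
\end{enumerate}
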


We will also use the following, essentially formal, naturality statement for the Casson--Sullivan invariant. 

\begin{lemma}
\label{lem:natCS}
  Let $F \colon (M_1, C_1) \to (M_2, C_2)$ be a homeomorphism of pairs, where for $i=1,2$,  $M_i$ is a smooth 4-manifold and $C_i$ is a closed subset.  Suppose that $F$ restricts to a diffeomorphism on some open neighbourhoods $\nu C_1\to \nu C_2$.
  Let $(V_i, D_i) \subseteq (M_i, C_i)$ be an inclusion of pairs where~$V_i$ is an open codimension zero submanifold and $D_i \subseteq C_i$ is also closed. Assume that~$F$ restricts to a homeomorphism $(V_1,D_1) \to (V_2,D_2)$. Write $\nu D_1\subseteq \nu C_1$ for some open neighbourhood and~$\nu D_2$ for its image under $F$. Then under the inclusion $(V_1,D_1)\subseteq (M_1, C_1)$ we have
  \[
  H^3(M_1,C_1)\to H^3(V_1, D_1);\qquad \cs(\text{$F$ rel.~$\nu C_1$})\mapsto \cs(\text{$F|_{V_1}$ rel.~$\nu D_1$}).
  \]
\end{lemma}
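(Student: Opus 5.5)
The plan is to reduce the statement to naturality of the Kirby--Siebenmann obstruction under restriction to open subsets, together with naturality of the isomorphisms~\eqref{eq:excision}.

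\textbf{Setting up the two smoothings.} Fix the data used to define $\cs(\text{$F$ rel.~$\nu C_1$})$: the partially smoothed open set
\[\Sigma=(M_1\times([0,\varepsilon)\cup(1-\varepsilon,1]))\cup(\nu C_1\times I).\]
For $F|_{V_1}$ rel.~$\nu D_1$, use the same $\varepsilon$ and form
\[\Sigma_V=(V_1\times([0,\varepsilon)\cup(1-\varepsilon,1]))\cup(\nu D_1\times I).\]
Here $\nu D_1\subseteq \nu C_1\cap V_1$; if needed, shrink $\nu D_1$ so that it lies in $V_1$. Since $V_1$ is open, the smooth structure on $\Sigma_V$ is the restriction of the one on $\Sigma$. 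Indeed, $\sigma_1$ restricts to the smooth structure on $V_1$, and $F^*\sigma_2$ restricts to $(F|_{V_1})^*\sigma_2|_{V_2}$. Hence $\Sigma_V\subseteq \Sigma\cap(V_1\times I)$ is an inclusion of open smooth submanifolds, giving a map of pairs
\[(V_1\times I,\Sigma_V)\to(M_1\times I,\Sigma).\]

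\textbf{Naturality of the obstruction.} I would invoke the naturality of the relative Kirby--Siebenmann obstruction under restriction to open subsets. Restricting the structure on $\Sigma$ to the open set $V_1\times I$ gives a structure on $\Sigma\cap (V_1\times I)$. The obstruction to extending a structure from an open subset $U$ to the whole manifold pulls back under restriction to open sets, and shrinking $U$ also pulls back the obstruction along the induced map of pairs. This is standard obstruction-theoretic naturality, since $\ks$ is the pullback of the universal class via the classifying map of the stable tangent microbundle relative to its lift to $BO$. Therefore
\[\ks(M_1\times I,\Sigma)\mapsto \ks(V_1\times I,\Sigma_V)\]
under $H^4(M_1\times I,\Sigma)\to H^4(V_1\times I,\Sigma_V)$.

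\textbf{Compatibility with~\eqref{eq:excision}.} It then remains to check that~\eqref{eq:excision} is natural for the inclusion. Each map in~\eqref{eq:excision} comes from a map of pairs or triples: the projection, the excision inclusion, and the coboundary of the triple. Each of these commutes with the inclusions of $V_1$ into $M_1$, $D_1\times I$ into $C_1\times I$, and so on. So the diagram
\[H^3(M_1,C_1)\to H^4(M_1\times I,\Sigma)\quad\text{versus}\quad H^3(V_1,D_1)\to H^4(V_1\times I,\Sigma_V)\]
commutes, with vertical restriction maps, by functoriality of cohomology and naturality of connecting homomorphisms. One subtlety is that $\nu C_1$ and $\nu D_1$ enter only through homotopy-equivalent replacements of $C_1$ and $D_1$. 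Thus I would write the middle terms with $\nu C_1$ and $\nu D_1$ and check that the projections are compatible.

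\textbf{Main obstacle.} I expect the main obstacle to be the middle step: justifying naturality of $\ks$ relative to open subsets in the nonproper, partially smoothed setting, including the shrinking of $\nu D_1$ inside $\nu C_1\cap V_1$. I would handle it via the classifying-map description of $\ks$ as a primary obstruction, where restriction naturality is formal.
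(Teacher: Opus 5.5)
Your proposal is correct and follows essentially the same route as the paper. The paper also forms the square of open inclusions $\Sigma_D\subseteq\Sigma_C$ and $V_1\times I\subseteq M_1\times I$, and invokes naturality of the relative Kirby--Siebenmann obstruction; it cites Kirby--Siebenmann, Essay IV, Theorem 10.1, where you instead sketch this via classifying maps. It then observes that the isomorphisms in \eqref{eq:excision} are compatible with restriction, and your explicit shrinking of $\nu D_1$ into $V_1$ is left implicit there.
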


\begin{proof}
     Given $F \colon (M_1,C_1) \to (M_2,C_2)$, to define the Casson-Sullivan invariant $\cs(\text{$F$ rel.~$\nu C_1$})$, at the beginning of Section~\ref{sec:csbasics}, we first built a smooth structure on the open submanifold
     \[
     (M_1\times[0,\varepsilon)\cup(1-\varepsilon])\cup (\nu C_1\times I)\subseteq M_1\times I.
     \]
     In this proof we will denote that smooth manifold by $\Sigma_C\subseteq M_1\times I$. Following the similar construction for $F|_{V_1} \colon (V_1,D_1) \to (V_2,D_2)$, we obtain a smooth structure on the open submanifold 
     \[
     (V_1\times[0,\varepsilon)\cup(1-\varepsilon])\cup (\nu D_1\times I)\subseteq V_1\times I,
     \]
     which we denote by $\Sigma_D\subseteq V_1\times I$. We thus have inclusions of open submanifolds
     \[
     \begin{tikzcd}
         \Sigma_D\ar[r]\ar[d]
         & V_1\times I\ar[d]
         \\
         \Sigma_C\ar[r]
         & M_1\times I
     \end{tikzcd}
     \]
     As the relative Kirby-Siebenmann invariant is natural under such maps \cite{Kirby-Siebenmann:1977-1}*{Essay IV, Theorem 10.1}, this induces
       \[
  H^4(M_1\times I,\Sigma_C)\to H^4(V_1\times I,\Sigma_D);\qquad \ks(M_1\times I,\Sigma_C)\mapsto \ks(V_1\times I,\Sigma_D).
  \]
  Finally, the isomorphisms described in~\eqref{eq:excision} induce the corresponding isomorphisms for the pair~$(V_1\times I,\Sigma_D)$, upon restriction, so that the following commutes
  \[
  \begin{tikzcd}
      H^3(V_1,D_1;\Z/2)
      &
      \ar[l, "\cong"'] H^3(\Sigma_D, \Sigma_D\setminus (V_1\times[0,\varepsilon));\Z/2) \ar[r, "\cong"]
      & H^4(V_1 \times I, \Sigma_D;\Z/2)
      \\
      H^3(M_1,C_1;\Z/2)\ar[u]
      &
      \ar[l, "\cong"'] H^3(\Sigma_C, \Sigma_C\setminus (M_1\times[0,\varepsilon));\Z/2) \ar[r, "\cong"]\ar[u]
      & H^4(M_1 \times I, \Sigma_C;\Z/2)\ar[u]
  \end{tikzcd}
  \]
  Thus under the inclusion $(V_1,D_1)\subseteq (M_1, C_1)$ we have
  \[
  H^3(M_1,C_1)\to H^3(V_1, D_1);\qquad \cs(\text{$F$ rel.~$\nu C_1$})\mapsto \cs(\text{$F|_{V_1}$ rel.~$\nu D_1$}),
  \]
  which completes the proof.
\end{proof}

\subsection{Casson-Sullivan for the surface exteriors}

Recall that $X$ is a fixed smooth, orientable, compact $4$-manifold and $\Sigma_1, \Sigma_2\subseteq X$ are smooth, proper surfaces with $\partial \Sigma_1 = \partial \Sigma_2$.

\begin{definition}
\label{notation1}
We say a homeomorphism of pairs
\[
\widehat{F}\colon (X,\ol{\nu}\Sigma_1)\xrightarrow{\cong_{C^0, C^\infty}}
 (X,\ol{\nu}\Sigma_2)
 \]
 is \emph{smooth near $\Sigma_1$} if
  the restriction \[\wh{F}|_{\ol{\nu}\Sigma_1}\colon \ol{\nu}\Sigma_1\xrightarrow{\cong_{C^{\infty}}} \ol{\nu}\Sigma_2\] is a diffeomorphism sending $\Sigma_1$ to $\Sigma_2$. 
\end{definition}

\begin{notation}
Given a homeomorphism that is smooth near~$\Sigma_1$
    \[
\widehat{F}\colon (X,\ol{\nu}\Sigma_1)\xrightarrow{\cong_{C^0, C^\infty}}
 (X,\ol{\nu}\Sigma_2),
 \]
for $i=1,2$, write
\[
F\colon X_1\to X_2,\quad \text{where~$F:=\wh{F}|_{X_1}$}
\]
(recall that $X_i:=X\sm \nu\Sigma_i$).
Note that $F|_{\partial X_1}$ is a diffeomorphism.
\end{notation}

We now recall that a homeomorphism sending $\Sigma_1$ to $\Sigma_2$ can always be smoothed near the surfaces. For this we will use that the surfaces $\Sigma_i\subseteq X$ and their closed tubular neighbourhoods~$\ol{\nu}\Sigma_i$ are already smooth submanifolds of $X$.

\begin{lemma}\label{lem:make-homeo-smooth-on-surface-neighbourhood}
Suppose we are given a homeomorphism
\[
\wh{F}'\colon X\xrightarrow{\cong_{C^0}} X
\]
such that $\wh{F}'(\Sigma_1)=(\Sigma_2)$.
    Then $\wh{F}'$ is topologically isotopic rel.~boundary to a homeomorphism that is smooth near~$\Sigma_1$
    \[
\widehat{F}\colon (X,\ol{\nu}\Sigma_1)\xrightarrow{\cong_{C^0, C^\infty}}
 (X,\ol{\nu}\Sigma_2),
 \]
\end{lemma}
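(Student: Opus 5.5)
We will isotope $\wh F'$ in stages: first to a homeomorphism that is a diffeomorphism on a neighbourhood of $\Sigma_1$, and then adjust it so that it carries the chosen tubular neighbourhood $\ol\nu\Sigma_1$ onto $\ol\nu\Sigma_2$. The tools are the smoothing theory of Kirby--Siebenmann (and Freedman--Quinn in dimension $4$), applied to the restriction of $\wh F'$ to a neighbourhood of $\Sigma_1$, together with the uniqueness of topological normal bundles for locally flat surfaces in $4$-manifolds \cite{FQ}*{Section~9.3}.

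\textbf{Step 1: a topological bundle map.} The surfaces $\Sigma_i$ are smooth, so $\ol\nu\Sigma_i$ are smooth disc bundles, and $\wh F'(\ol\nu\Sigma_1)$ is a topological normal bundle of $\Sigma_2 = \wh F'(\Sigma_1)$. By uniqueness of normal bundles up to ambient isotopy (rel.\ $\Sigma_2$, and rel.\ $\partial X$, since near $\partial X$ we may first use collar uniqueness to arrange that $\wh F'$ already preserves the boundary collars of the tubular neighbourhoods), there is an ambient isotopy rel.\ boundary carrying $\wh F'(\ol\nu\Sigma_1)$ onto $\ol\nu\Sigma_2$ by a bundle map. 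Composing, we obtain $\wh F''$, isotopic rel.\ boundary to $\wh F'$, which restricts to a topological disc bundle isomorphism $\ol\nu\Sigma_1\to\ol\nu\Sigma_2$ covering the homeomorphism $f:=\wh F'|\colon\Sigma_1\to\Sigma_2$.

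\textbf{Step 2: smooth the surface map and the bundle map.} The map $f$ is a homeomorphism of smooth surfaces. It is smooth on a collar of $\partial\Sigma_1$, since $\partial\Sigma_1=\partial\Sigma_2$ and $\wh F'$ fixes $\partial X$; after a small isotopy near the boundary we may assume it is smooth there. In dimension $2$, homeomorphisms are isotopic rel.\ boundary to diffeomorphisms; this is Hauptvermutung/smoothing in dimension at most $3$, with no obstruction. Lifting this isotopy through the bundle structure, we obtain an ambient isotopy supported near $\ol\nu\Sigma_2$. A topological $D^2$-bundle isomorphism covering a diffeomorphism is then a $\Top(2)$-bundle map. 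Since $\Top(2)\simeq O(2)$ (Kneser), it is homotopic through bundle maps, rel.\ the boundary region, to a smooth $O(2)$-bundle map. This homotopy of bundle isomorphisms gives an isotopy of $\ol\nu\Sigma_1\to\ol\nu\Sigma_2$, which we extend to an ambient isotopy of $X$ using a collar of $\partial\ol\nu\Sigma_2$ in $X_2$ (isotopy extension, absorbed in the collar).

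\textbf{The main obstacle.} The hardest part is the rel.\ boundary bookkeeping when $\partial\Sigma_i\ne\emptyset$. Every normal-bundle uniqueness and isotopy-extension step must be performed relative to $\partial X$, and also relative to a neighbourhood of $\partial\Sigma_1$ in $X$, where $\wh F'$ is already the identity. We first arrange this using collar uniqueness; after that, each step is applied to the pair (surface, boundary collar) in its relative form, e.g.\ \cite{FQ}*{Theorem~9.3A} relative to a closed subset where the normal bundle is already standard. The resulting composite $\wh F$ is smooth near $\Sigma_1$ in the sense of \cref{notation1}.
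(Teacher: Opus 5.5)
Your argument is correct and is essentially the paper's sketch. It uses the same three ingredients: smoothing the surface homeomorphism and extending by isotopy extension, uniqueness of topological normal bundles \cite{FQ}*{Theorem~9.3}, and a fibrewise isotopy of the resulting bundle isomorphism to a smooth one. The only differences are that you apply normal-bundle uniqueness before smoothing the base map, and you justify the fibrewise step directly via $\Top(2)\simeq O(2)$ where the paper cites \cite{GalvinCS}*{Lemma~5.3}; your use of $O(2)$ rather than $\SO(2)$ structure group is the right choice here, since the surfaces may be nonorientable.
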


\begin{proof}
This was proved in \cite{GalvinCS}*{Lemma~5.3}, and we refer the reader to there for full details. We provide a sketch for convenience. 
Smooth~$\wh{F}'\vert_{\Sigma_1}$ using that homeomorphisms of surfaces are smoothable, and extend using isotopy extension. By uniqueness of tubular neighbourhoods~\cite{FQ}*{Theorem 9.3}, we can further isotope to a homeomorphism sending a smooth tubular neighbourhood of~$\Sigma_1$ to a smooth tubular neighbourhood of~$\Sigma_2$, via a $D^2$-bundle (with $\SO(2)$ structure group) isomorphism covering a diffeomorphism.  As shown in \cite{GalvinCS}*{Lemma~5.3}, such a bundle isomorphism can be fibrewise isotoped so as to give a diffeomorphism between the total spaces.
\end{proof}

We will need the following notation for the cornered structure of the surface exteriors.

\begin{notation}\label{not:decomposition}
For $i=1,2$, the exterior $X_i$ is a manifold with corners, where
\[
\partial X_i=\partial_0X_i\cup_{\partial_{01}X_i}\partial_1X_i,\qquad\text{defined by}\qquad \partial_0X_i:=\partial X_i\cap \partial X,
\]
so that $\partial_1X_i$ is the total space of the sphere bundle of the normal bundle to $\Sigma_i$.
The total space of the disc bundle to $\Sigma_i$ is also a manifold with corners, where
\[
\partial \ol{\nu}\Sigma_i=\partial_0\ol{\nu}\Sigma_i\cup_{\partial_{01}\ol{\nu}\Sigma_i}\partial_1\ol{\nu}\Sigma_i,\qquad \text{defined by}\qquad \partial_0\ol{\nu}\Sigma_i:=\partial \ol{\nu}\Sigma_i\cap \partial X,
\]
so that $\partial_1\ol{\nu}\Sigma_i=\partial_1X_i$ and $\partial_{01}\ol{\nu}\Sigma_i=\partial_{01}X_i$.
There is then a decomposition of $X$ into manifolds with corners
\[
X=X_i\cup_{\partial_1X_i}\ol{\nu}\Sigma_i,\qquad\text{with}\qquad \partial X=\partial_0X_i\cup\partial_0\ol{\nu}\Sigma_i.
\]
This is depicted schematically in Figure~\ref{fig:schematic}.

\end{notation}

\begin{figure}
    \centering
    \begin{tikzpicture}

    \begin{scope}[]
        \draw[black, thick, fill=gray, fill opacity=0.2] (0,1.5) 
        to [out=down, in=left] (2.5, 0)
        to [out=right, in=down] (5,1.5)
        to [out=up, in=right] (2.5, 3)
        to [out=left, in=up] (0,1.5);
        \draw[black, thick] (0,1.5) -- (5,1.5);
        \filldraw[black] (0,1.5) circle (2pt) node[anchor=east]{\(\partial_{01}X_i\)};
        \filldraw[black] (5,1.5) circle (2pt) node[anchor=west]{\(\partial_{01}X_i\)};
        \node at (1.6,2.25) {\(X_i\)};
        \node at (1.6,0.75) {\(\ol{\nu}\Sigma_i\)};
        \node at (3.4,1.74) {\(\partial_1 X_i = \partial_1 \ol{\nu}\Sigma_i\)};
        \node at (3.6,3.24) {\(\partial_0 X_i\)};
        \node at (3.6,-0.24) {\(\partial_0 \ol{\nu}\Sigma_i\)};
        \end{scope}

         \begin{scope}[shift={(6.7,0)}]
        \draw[black, thick] (0,1.5) 
        to [out=down, in=left] (2.5, 0)
        to [out=right, in=down] (5,1.5)
        to [out=up, in=right] (2.5, 3)
        to [out=left, in=up] (0,1.5);
        \draw[black, thick] (0,1.5) -- (5,1.5);
        \filldraw[black] (0,1.5) circle (2pt);
        \filldraw[black] (5,1.5) circle (2pt);
        \node at (3.6,3.24) {\(Y\)};
        \end{scope}
    \end{tikzpicture}
    \caption{Left: a schematic for the decomposition of $X$ as described in \Cref{not:decomposition}. Right: the closed subspace $Y$ used in the proof of Proposition~\ref{prop:gammas}.}
    \label{fig:schematic}
\end{figure}

We now wish to analyse the Casson-Sullivan invariant of a homeomorphism $F\colon X_1\to X_2$ that restricts to a diffeomorphism on the boundary.

\begin{definition}
    Given a connected component $\Sigma_i^j\subseteq \Sigma_i$, a \emph{meridian} to that component is a simple closed curve in~$\partial_1\ol{\nu} \Sigma_i^j$ that is the boundary of a $D^2$-fibre in a tubular neighbourhood of~$\Sigma_i^j$. 
    Note that by uniqueness of tubular neighbourhoods, any two such curves are homologous in $H_1(\partial_1\ol{\nu} \Sigma_i^j;\Z/2)$.
\end{definition}

The following technical lemma shows that the Casson-Sullivan invariant of $F$ is governed by the $\Z/2$-homology classes of meridians.

\begin{proposition}\label{prop:gammas}
    Suppose $\widehat{F}\colon (X,\ol{\nu}\Sigma_1)\xrightarrow{\cong_{C^0, C^\infty}}
 (X,\ol{\nu}\Sigma_2)$ is a homeomorphism that is smooth near~$\Sigma_1$, and that moreover~$\widehat{F}$ is topologically isotopic rel.~boundary to a diffeomorphism. Then the class $\mathrm{PD}(\cs(F))$ is equal to $\sum_{j=1}^N [\gamma_j]$ in $H_1(X_1;\Z/2)$, for $\{\gamma_j\}_{j=1}^N$ meridians to some collection of pairwise distinct connected components $\Sigma^j_1$ of $\Sigma_1$. 
\end{proposition}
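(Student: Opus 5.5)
The plan is to compare $\cs(F)$ with the Casson--Sullivan invariant of $\wh F$ on all of $X$, which vanishes because $\wh F$ is topologically isotopic rel.~boundary to a diffeomorphism, and then read off the discrepancy from the long exact sequence of a pair. Let $Y:=\partial X\cup\ol{\nu}\Sigma_1$, the closed subspace in Figure~\ref{fig:schematic}. Since $\wh F$ is smooth near $\Sigma_1$ and is a diffeomorphism near $\partial X$ (after a collar adjustment), $\wh F$ restricts to a diffeomorphism on a neighbourhood $\nu Y$. Hence $\cs(\text{$\wh F$ rel.~$\nu Y$})\in H^3(X,Y;\Z/2)$ is defined.

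\textbf{Step 1.} We show $\cs(\wh F)=0\in H^3(X,\partial X;\Z/2)$. Let $G$ be a diffeomorphism topologically isotopic rel.~boundary to $\wh F$. The isotopy gives a homeomorphism $X\times I\to X\times I$ that is the identity near $X\times\{0\}\cup\partial X\times I$. It carries the structure used to define $\ks$ for $\wh F$ to the one for $G$. Naturality of the relative Kirby--Siebenmann invariant under such homeomorphisms then gives $\cs(\wh F)=\cs(G)=0$, since the smooth structure for a diffeomorphism $G$ obviously extends over $X\times I$.

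\textbf{Step 2.} We apply \cref{lem:natCS} twice.
\begin{itemize}
\item With $(V,D)=(X,\partial X)\subseteq (X,Y)$, the class $\cs(\text{$\wh F$ rel.~$\nu Y$})$ maps to $\cs(\wh F)=0$ under $H^3(X,Y)\to H^3(X,\partial X)$.
\item With $V$ the interior of $X_1$ together with a collar and $D=\partial X_1$, using the excision isomorphism $H^3(X,Y)\cong H^3(X_1,\partial X_1)$, the class $\cs(\text{$\wh F$ rel.~$\nu Y$})$ corresponds to $\cs(F)$.
\end{itemize}
Here I must take care with corners and with the choice of neighbourhoods, so that $\nu\partial X_1$ sits inside $\nu Y$.

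\textbf{Step 3.} From the long exact sequence of the triple $(X,Y,\partial X)$, $\cs(F)$ lies in the image of the coboundary
\[
\delta\colon H^2(Y,\partial X;\Z/2)\to H^3(X,Y;\Z/2)\cong H^3(X_1,\partial X_1;\Z/2).
\]
By excision, $H^2(Y,\partial X)\cong H^2(\ol{\nu}\Sigma_1,\partial_0\ol{\nu}\Sigma_1)$. Via the Thom isomorphism and Poincaré--Lefschetz duality, this is $\bigoplus_j \Z/2$, with one generator per component $\Sigma_1^j$ (closed or not), dual to a $D^2$-fibre of that component.

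\textbf{Step 4.} We identify $\mathrm{PD}\circ\delta$ of the $j$th generator with the meridian class. Under Poincaré--Lefschetz duality, $\delta$ corresponds to
\[
H_2(\ol{\nu}\Sigma_1,\partial_1\ol{\nu}\Sigma_1)\xrightarrow{\ \partial\ } H_1(\partial_1 X_1)\to H_1(X_1).
\]
This map sends the fibre disc to its boundary circle, which is a meridian $\gamma_j$. A general element of $H^2(Y,\partial X)$ is a sum of distinct generators, so $\mathrm{PD}(\cs(F))=\sum[\gamma_j]$ over pairwise distinct components, as required.

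The main obstacle is making the duality and sign-free compatibility in Step 4 precise on these cornered manifolds, together with the isotopy invariance in Step 1. For Step 4 I would first check that the duality squares commute using the standard ladder of long exact sequences for $(X_1,\partial_0X_1,\partial_1X_1)$.
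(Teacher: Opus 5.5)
Your proof is correct. It follows the paper's overall strategy: $\cs(\wh F)=0$, two applications of \cref{lem:natCS}, and the coboundary of the triple $(X,Y,\partial X)$. The difference is your choice of $Y$, and that choice genuinely shortens the argument.

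The paper's $Y$, the one in Figure~\ref{fig:schematic}, is $\partial X_1\cup\partial(\ol{\nu}\Sigma_1)=\partial X\cup\partial_1X_1$, not $\partial X\cup\ol{\nu}\Sigma_1$. With that smaller $Y$ the paper proceeds as follows:
\begin{enumerate}
\item It gets a splitting $H^3(X,Y)\cong H^3(X_1,\partial X_1)\oplus H^3(\ol{\nu}\Sigma_1,\partial\ol{\nu}\Sigma_1)$ and kills the second component using smoothness of $\wh F$ on $\ol{\nu}\Sigma_1$.
\item It must then handle $H^2(Y,\partial X)\cong H^2(\partial_1X_1,\partial_{01}X_1)$, which is dual to all of $H_1$ of the circle bundle.
\item It needs \cref{lem:elementary} to identify the connecting map with the pair of inclusions into $X_1$ and $\ol{\nu}\Sigma_1$.
\item It needs a Leray--Serre argument to show that classes dying in $H_1(\ol{\nu}\Sigma_1)$ are sums of meridians.
\end{enumerate}

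By putting all of $\ol{\nu}\Sigma_1$ into $Y$, you feed in the same smoothness input, but at the level of defining $\cs$ relative to $\nu Y$. Then $H^3(X,Y)\cong H^3(X_1,\partial X_1)$ by a single excision. Also $H^2(Y,\partial X)\cong H^2(\ol{\nu}\Sigma_1,\partial_0\ol{\nu}\Sigma_1)\cong(\Z/2)^c$ is already spanned by duals of fibre discs. So the meridians appear directly through the Thom isomorphism, with no spectral sequence and no splitting. The paper's symmetric decomposition shows the relative invariants of both pieces separately; yours buys brevity.

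To make Step~4 precise, first use naturality of the triple sequence under the map of triples $(X_1,\partial X_1,\partial_0X_1)\to(X,Y,\partial X)$. This shows that $\delta$, after the excision identifications, is restriction to $H^2(\partial_1X_1,\partial_{01}X_1)$ followed by the coboundary of $(X_1,\partial X_1,\partial_0X_1)$. Then apply the standard duality ladders for a manifold whose boundary splits as $\partial_0\cup\partial_1$, once for $\ol{\nu}\Sigma_1$ and once for $X_1$. These turn the two maps into $\partial\colon H_2(\ol{\nu}\Sigma_1,\partial_1\ol{\nu}\Sigma_1)\to H_1(\partial_1X_1)$ followed by inclusion into $H_1(X_1)$, with signs irrelevant mod~$2$.

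Your Step~1 is a valid way to justify what the paper simply asserts. You build a homeomorphism of $X\times I$ from the isotopy, equal to the identity near $X\times\{0\}\cup\partial X\times I$, carrying one boundary smooth structure to the other.
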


\begin{proof}
Throughout this proof, $\Z/2$-coefficients in homology and cohomology are understood. 
For brevity, we write for $i=1,2$
\[
\partial_i:=\partial_i\overline{\nu}\Sigma_1,\quad \partial_{01}:=\partial_{01}\overline{\nu}\Sigma_1.
\]
Define
\[Y = \partial X_1 \cup \partial(\ol{\nu}\Sigma_1)\]
and take the specific open neighbourhood
\[
\nu Y:=\nu(\partial X_1)\cup\nu(\partial(\overline{\nu}\Sigma_1));
\]
in other words, the union of an open boundary collar on $\partial X_1$ and an open tubular neighbourhood of $\partial(\overline{\nu}\Sigma_1)$. 
The closed subspace $Y$ is depicted schematically in Figure~\ref{fig:schematic}. 
As $\wh{F}$ is topologically isotopic rel.~boundary to a diffeomorphism, in particular it is a diffeomorphism upon restriction to $\partial X$. This, together with the hypothesis that $\wh{F}$ is smooth near $\Sigma_1$, implies $\wh{F}$ may be assumed to be a diffeomorphism on $\nu Y$. This means there is a Casson-Sullivan invariant~$\cs(\text{$\wh{F}$ rel.~$\nu Y$})\in H^3(X,Y)$.

We develop
the following diagram, show that it commutes, and justify the claimed isomorphisms.
\[
\begin{tikzcd}[row sep = large]
    H^2(Y,\partial X) \ar[r, "\delta"] \ar[d,"\mathrm{PALD}","\cong"'] 
    & H^3(X,Y) \ar[d,"\mathrm{PALD}","\cong"']\ar[r,"\cong"'] & 
H^3(X_1,\partial X_1)\oplus H^3(\ol{\nu}\Sigma_1,\partial \ol{\nu}\Sigma_1)
     \ar[d,"{(\mathrm{PALD},\mathrm{PALD})}","\cong"'] \\
    H_2(X\sm \partial X,X\sm Y) \ar[r, "\partial"] \ar[d, "\cong", "\alpha"']
    & H_1(X\sm Y)  \ar[d, "\cong"]
    & \ar[l,"\cong"] 
    H_1(X_1\sm\partial X_1)\oplus H_1(\ol{\nu}\Sigma_1\sm\partial)
    \ar[d,"\cong"]
     \\
      H_1(\partial_1\sm\partial_{01})\ar[r,"{(k_*,\ell_*)}"]  & H_1(X_1\sm\partial X)\oplus H_1(\ol{\nu}\Sigma_1\sm\partial_0)\ar[r,"\cong"]
      &
      H_1(X_1)\oplus H_1(\ol{\nu}\Sigma_1)
\end{tikzcd}
\]

In the top left square, the map $\delta$ is the connecting map in the long exact sequence of the triple~$(X,Y,\partial X)$ and the map $\partial $ is the connecting map in the long exact sequence of the triple~$(X\sm \partial X, X\sm Y, \emptyset)$. The downward maps denoted~$\mathrm{PALD}$ are Poincar\'{e}-Alexander-Lefschetz duality isomorphisms~\cite[Theorem~VI.8.3]{Br93}, given by cap product, applied with $X$ (or $X_1$ or $\ol{\nu}\Sigma_1$) union an open exterior collar. The top left square commutes by~\cite[Lemma VI.8.1]{Br93}; we note that Bredon only considers the case of a pair, but the proof readily extends to the case of a triple, as in our case.

For the lower left square, the right-most and lower maps are inclusion-induced. We postpone the definition of the map $\alpha$ and the proof that the square commutes to the upcoming Lemma~\ref{lem:elementary}. As the particulars are not relevant to this proof, we will proceed, assuming this map is defined and the square commutes. For future reference, the lemma applies, using
\[
X\sm\partial X=(X_1\sm\partial X)\cup (\ol{\nu}\Sigma_1\sm\partial_0)
\quad\text{and}\quad
X\sm Y=\mathrm{int}(X_1\sm\partial X)\sqcup \mathrm{int}(\ol{\nu}\Sigma_1\sm\partial_0),
\]
in the notation of that lemma.

The lower right square is all inclusion-induced and thus commutes. The inclusions are all from deformation retracts and thus all are isomorphisms.

In the top right square the horizontal arrows are inclusion-induced, and hence the square commutes by naturality of Poincar\'{e}-Alexander-Lefschetz duality.
To see that the top arrow in this square is an isomorphism, consider that the Mayer-Vietoris  sequence for 
\[
(X,Y) =\left( X_1\cup \ol{\nu}\Sigma_1, \partial X_1 \cup \partial\ol{\nu}\Sigma_1 \right)
\]
yields an isomorphism 
\[H^3(X,Y) \xrightarrow{\cong} H^3(X_1,\partial X_1) \oplus H^3(\ol{\nu}\Sigma_1,\partial\ol{\nu}\Sigma_1), \]
since the other terms in the long exact sequence are 
\[H^n(X_1 \cap \ol{\nu} \Sigma_1, \partial X_1 \cap \partial \ol{\nu}\Sigma_1) =0\]
as $X_1 \cap \ol{\nu} \Sigma_1 = \partial X_1 \cap \partial \ol{\nu}\Sigma_1$.

Having established the diagram, we now use it to prove the lemma.
The top right horizontal isomorphism in the diagram is induced by inclusions $(X_1,\partial X_1)\subseteq (X,Y)$ and $(\ol{\nu}\Sigma_1,\partial \ol{\nu}\Sigma_1)\subseteq (X,Y)$. We may now apply Lemma~\ref{lem:natCS} to each of these inclusions. More precisely, in order to arrange that these are inclusions of open submanifolds, use open neighbourhoods~$\nu(X_i)\subseteq X$ and $\nu(\overline{\nu}{\Sigma_i})\subseteq X$ as the manifolds $V_i$ in the respective applications of the lemma. We thus see that~$\cs(\text{$\wh{F}$ rel.~$\nu Y$})\in H^3(X,Y)$ is mapped to 
\[
(\cs(F),\cs(\wh{F}|_{\ol{\nu}\Sigma_1}))\in H^3(X_1,\partial X_1) \oplus H^3(\ol{\nu}\Sigma_1,\partial\ol{\nu}\Sigma_1)
\]
under this isomorphism.
Since $\wh{F}$ is smooth on $\ol{\nu}\Sigma_1$,  \cref{lem:make-homeo-smooth-on-surface-neighbourhood}, implies that $\cs(\wh{F}|_{\ol{\nu}\Sigma_1})=0$, so in fact the image of $\cs(\text{$\wh{F}$ rel.~$\nu Y$})\in H^3(X,Y)$ is
\[
(\cs(F),0) \in H^3(X_1,\partial X_1) \oplus H^3(\ol{\nu}\Sigma_1,\partial\ol{\nu}\Sigma_1).
\]

The top left horizontal map in the diagram is part of the long exact sequence of the triple $(X,Y, \partial X)$ and we consider the subsequent map $H^3(X,Y) \to H^3(X,\partial X)$ in that long exact sequence (this map is not depicted in the diagram). Another application of Lemma~\ref{lem:natCS} shows that the element $\cs(\text{$\wh{F}$ rel.~$\nu Y$})$ maps to $\cs(\wh{F})\in H^3(X,\partial X)$ under this subsequent map. As $\wh{F}$ is topologically isotopic rel.~boundary to a diffeomorphism, we have $\cs(\wh{F})=0$. Hence there exists some class $d \in H^2(Y,\partial X)$ mapping to~$\cs(\text{$\wh{F}$ rel.~$\nu Y$}) \in H^3(X,Y)$ along the top left horizontal map.

Sending $d$  clockwise around the boundary of the diagram to the bottom right corner gives $(\mathrm{PD}(\cs(F)),0)\in H_1(X_1)\oplus H_1(\ol{\nu}\Sigma_1)$. Here, we have used that one definition of Poincar\'{e}-Lefschetz duality is as the composition of the the right-most column of this diagram~\cite[\textsection~VI.9]{Br93}.
Now consider sending $d$ anti-clockwise around the diagram to the bottom right corner. Define $e := \alpha\circ \mathrm{PALD}(d) \in H_1(\partial_1\sm\partial_{01})$.
By commutativity of the diagram we have that the image of $(k_*e,\ell_*e)$ in $H_1(X_1)\oplus H_1(\ol{\nu}\Sigma_1)$ is $(\mathrm{PD}(\cs(F)),0)$. This shows that $\ell_*e=0$, and we claim this is enough to show that $e$ is a sum of meridians of~$\Sigma_1$. Given this, it follows that~$\mathrm{PD}(\cs(F)) \in H_1(X_1)$, which equals the image of $k_*e$ under $H_1(X_1 \sm \partial X) \to H_1(X_1)$, is a sum of meridians. This completes the proof, modulo the claim.  

It remains to prove the claim. For this consider the Leray--Serre spectral sequence for the fibration 
\[S^1 \to (\partial_1\sm\partial_{01}) \to (\Sigma_1\sm\partial\Sigma_1).\]
The $E^2$ page is 
\[E^2_{p,q} \cong H_p(\Sigma_1\sm\partial\Sigma_1;H_q(S^1)).\]
For $p+q=1$ this leads to  $E^{\infty}_{1,0} \cong H_1(\Sigma_1\sm\partial\Sigma_1)$ and \[E^{\infty}_{0,1} \cong \coker \big(d^2_{2,0} \colon H_2(\Sigma_1\sm\partial\Sigma_1) \to H_0(\Sigma_1\sm\partial\Sigma_1;H_1(S^1))\big),\]
where $\coker d^2_{2,0}$ is generated by meridians $[\gamma_j]$, $j=1,\dots,N$, i.e.\ boundaries of $D^2$-fibres of a tubular neighbourhood, to a sub-collection of the connected components of $\Sigma_1\sm\partial\Sigma_1$. 
We obtain a short exact sequence 
\[0 \to \coker d^2_{2,0} \to H_1(\partial_1\sm\partial_{01}) \to  H_1(\Sigma_1\sm\partial\Sigma_1) \to 0.\]
The right hand map factors as 
\[H_1(\partial_1\sm\partial_{01}) \to H_1(\ol{\nu}\Sigma_1) \xrightarrow{\cong} H_1(\Sigma_1\sm\partial\Sigma_1).\]
It follows that every element of the kernel of the map 
$H_1(\partial_1\sm\partial_{01}) \to H_1(\ol{\nu}\Sigma_1)$
is a sum of meridians $\sum_{j=1}^N [\gamma_j]$, as desired.
\end{proof}

We now prove the lemma promised in the proof of Proposition~\ref{prop:gammas}. 

\begin{lemma}
    \label{lem:elementary}
    If $M=N\cup_\partial N'$ is a union of $n$-manifolds along their boundary $\partial N=\partial N'$ then for all $r\geq 0$ there is a map $\alpha$ such that the following diagram commutes.
    \begin{equation}
    \label{eq:diagram1}
    \begin{tikzcd}
        H_r(M,\mathring{N}\sqcup\mathring{N}';\Z/2)\ar[r, "\partial"]\ar[d, "\alpha","\cong"']
        & H_{r-1}(\mathring{N}\sqcup\mathring{N}';\Z/2)\ar[d, "\cong"',"\iota"]
        \\
        H_{r-1}(\partial N;\Z/2)\ar[r, "{(k_*,\ell_*)}"]
        &
        H_{r-1}(N;\Z/2)\oplus H_{r-1}(N';\Z/2).
    \end{tikzcd}
    \end{equation}
    Here, the bottom and right maps are inclusion-induced and $\partial$ is the connecting map in the long exact sequence of the pair.
    \end{lemma}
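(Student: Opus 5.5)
We will construct $\alpha$ as a composite of excision, Thom-type identification via a collar, and a connecting map, and then check commutativity at the chain level. Throughout, $\mathring{N}$ and $\mathring{N}'$ denote $N\sm\partial N$ and $N'\sm\partial N'$, and $\Z/2$ coefficients are understood.

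First we choose a bicollar $c\colon \partial N\times(-1,1)\hookrightarrow M$ with $c(\partial N\times(-1,0])\subseteq N$ and $c(\partial N\times[0,1))\subseteq N'$, and write $U$ for its image. Excising the closed set $M\sm U$, which lies in the interior of $\mathring N\sqcup\mathring N'$ after shrinking, gives an isomorphism
\[H_r(U, U\sm c(\partial N\times 0))\xrightarrow{\cong} H_r(M,\mathring N\sqcup\mathring N').\]
The pair $(U,U\sm c(\partial N\times0))$ is $\partial N\times\big((-1,1),(-1,1)\sm 0\big)$. Its homology is therefore $H_{r-1}(\partial N)$, via the boundary map of the pair composed with the projection of one side; explicitly, we use the connecting map $\partial$ followed by the inclusion-retraction $H_{r-1}(\partial N\times(-1,0))\cong H_{r-1}(\partial N)$. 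We define $\alpha$ as this composite applied to the inverse of the excision. It is an isomorphism by the Künneth formula, since $H_*((-1,1),(-1,1)\sm0)$ is $\Z/2$ in degree $1$ and zero otherwise.

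For commutativity, we compute $\partial$ on the excised representative $c(z\times[-\tfrac12,\tfrac12])$ for a cycle $z$ in $\partial N$. The boundary of this chain is $c(z\times\{\tfrac12\})-c(z\times\{-\tfrac12\})$, which is the same as the sum modulo $2$. Under $\iota$, this element lands in $H_{r-1}(\mathring N)\oplus H_{r-1}(\mathring N')$ as the pair of pushed-off copies of $z$. These copies are homologous in $N$, respectively $N'$, to $z$ itself via the collar, so the image equals $(k_*[z],\ell_*[z])$. This verifies the square. Since every class in the excised relative group is represented by such a product chain (again by Künneth), this check suffices. The map $\iota$ is an isomorphism because $\mathring N\hookrightarrow N$ is a homotopy equivalence.

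The main subtlety is the excision step. We need $M\sm U$ to have closure contained in the interior of $\mathring N\sqcup\mathring N'$, which forces us to replace $U$ by a smaller bicollar and to use the homotopy invariance $(M,\mathring N\sqcup\mathring N')\simeq(M,M\sm c(\partial N\times0))$. We also need to track signs, but these are irrelevant with $\Z/2$ coefficients, which is exactly why the lemma is stated mod $2$. The same argument applies verbatim when $\partial N$ is noncompact, as in the application $\partial_1\sm\partial_{01}$, since only a collar of the interface is used.
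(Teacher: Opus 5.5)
Your proposal is correct and takes essentially the same route as the paper: excise down to a collar of the interface, identify the relative homology of the collar pair with $H_{r-1}(\partial N)$, and check the square via push-offs. The paper does the identification using the long exact sequence, showing the connecting map is injective onto the diagonal, whereas you use Künneth and product chains $z\times[-\tfrac12,\tfrac12]$. One minor remark: no shrinking or homotopy invariance is needed in the excision step, since $M\sm U$ is closed and $\mathring N\sqcup\mathring N'=M\sm\partial N$ is already open.
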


    \begin{proof}
    Throughout this proof, $\Z/2$-coefficients in homology are understood.  To begin, we make some abstract observations. For any space $A$, the long exact sequence of the pair $(A\times[-1,1],A\times [-1,0)\sqcup (0,1])$ has the following portion.
    \begin{multline*}
       \dots\to H_r(A\times[-1,1])\xrightarrow{0} H_r(A\times[-1,1],A\times [-1,0)\sqcup (0,1])\\
       \xrightarrow{\partial} \underbrace{H_{r-1}(A\times [-1,0))\oplus H_{r-1}(A\times (0,1])}_{\cong\,{H_{r-1}(A\times \{-1\})\oplus H_{r-1}(A\times \{1\})}} 
       \xrightarrow{(i_-, i_+)} H_{r-1}(A\times[-1,1])\to\dots
    \end{multline*}
    Identifying $A=A\times\{\pm 1\}$, the image of the connecting map is the diagonal subgroup
    \[
    \Delta:=\im(\partial)=\ker(i_+, i_-)=\{(x,x)\,|\,x\in H_{r-1}(A)\}\subseteq H_{r-1}(A)\oplus H_{r-1}(A).
    \]
    Note that we have a canonical isomorphism $\Delta = H_{r-1}(A)$.  In addition, $(i_+, i_-)$ is surjective, justifying the zero map displayed in our sequence. This  shows that $\partial$ is injective. Combining all this, $\partial$ determines an isomorphism to the diagonal subgroup, which we will write as
    \[
    \beta\colon H_r(A\times[-1,1],A\times [-1,0)\sqcup (0,1])\xrightarrow{\cong} H_{r-1}(A).
    \]
    
    We turn to the proof of the lemma. 
    We consider diagram \eqref{eq:diagram2} below; its purpose is to define the isomorphism $\alpha$ and to show that the topmost triangle commutes.  The left vertical arrow is the excision isomorphism, where we excise $M \sm (\partial N\times[-1,1])\subseteq M$, the complement of a closed tubular neighbourhood of $\partial N$.  The map $(k',\ell')$ is induced by the inclusion into each component of push-offs of~$\partial N=\partial N'$ into the respective interiors.  The map $\beta$ was defined above, taking $A = \partial N$, as was the down-and-right pointing inclusion map.

     \begin{equation}
    \label{eq:diagram2}
    \begin{tikzcd}[column sep = small]
        H_r(M,\mathring{N}\sqcup\mathring{N}')\ar[rr, "\partial"] \ar[rd,"\alpha","\cong"']
        & & H_{r-1}(\mathring{N}\sqcup\mathring{N}')
        \\
        & \Delta = H_{r-1}(\partial N) \ar[rd, "\subseteq"] \ar[ur,"(k'{,}\ell')"] & \\
        H_r(\partial N\times[-1,1],\partial N\times [-1,0)\sqcup (0,1])\ar[rr, "\partial"]\ar[uu, "\cong","\mathrm{exc}"']\ar[ur, "\beta","\cong"']
        & & H_{r-1}(\partial N\times [-1,0)\sqcup (0,1])\ar[uu]
    \end{tikzcd}
    \end{equation}

   The rightmost triangle commutes by definition of $(k',\ell')$.  The abstract discussion above, again with $A=\partial N$, implies that the lower triangle commutes.  The isomorphism $\alpha$ is then defined as $\beta\circ\mathrm{exc}^{-1}$, making the leftmost triangle commute.  
    The outer square of diagram \eqref{eq:diagram2} commutes because it is induced by an inclusion of pairs. 
    The commutativity of the topmost triangle then follows from combining all of the other commutativity statements. 

    Now we use this to prove that diagram~\eqref{eq:diagram1} commutes.  For the clockwise route in~\eqref{eq:diagram1} from $H_{r-1}(\partial N)$ to $H_{r-1}(N)\oplus H_{r-1}(N')$, the topmost triangle in \eqref{eq:diagram2} yields 
    \[
    \iota \circ \partial\circ\alpha^{-1}=\iota \circ (k'_*,\ell'_*).
    \]
    Then observe that $\iota \circ (k'_*,\ell'_*) = (k_*,\ell_*)$, which is the anti-clockwise route.  So diagram~\eqref{eq:diagram1} commutes as claimed. 
    \end{proof}

\section{Killing the Casson-Sullivan invariant at the expense of stabilisation}\label{section:killing-CS}

In this section, we recall a realisation result for the Casson-Sullivan invariant derived by the first named author using unpublished work of R.~Lee. We then show how to use this to kill the Casson-Sullivan invariant at the expense of stabilisation.

\medskip

The following statement was shown in \cite{GalvinCS}*{Proposition~3.1}, making use of unpublished work of R.~Lee to show that one stabilisation suffices; see \cite{galvin_2024}. 

\begin{proposition}\label{prop:ronnielee}
There exists a homeomorphism
\[
f\colon (S^1\times S^3)\# W_1 \xrightarrow{\cong_{C^0}} (S^1\times S^3)\# W_1
\]
with $\cs(f)\neq0$, inducing $f_*=\Id_{H_2((S^1\times S^3)\# W_1)}$, and such that $f|_{\ol{\nu}(S^1\times \pt)}=\Id_{\ol{\nu}(S^1\times \pt)}$. 
\end{proposition}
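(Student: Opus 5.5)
Since $M:=(S^1\times S^3)\# W_1$ is closed, $\cs(f)$ lies in $H^3(M;\Z/2)\cong\Z/2$, which is Poincaré dual to $H_1(M;\Z/2)$ and generated by $[S^1\times\pt]$. So we need a self-homeomorphism that acts trivially on $H_2$, is the identity near the circle $S^1\times\pt$, and cannot be stably smoothed. The plan is to construct $f$ as a ``twisted'' homeomorphism supported in a collar $S^3\times I$ (or in $S^3\times I$ connected with the $W_1$ summand), using the circle direction. We cut $M$ open along $\pt\times S^3$. Then a homeomorphism of $M$ that is the identity off a neighbourhood $(S^3\times[-1,1])\# W_1$ of the cut is determined by a self-homeomorphism $g$ of $(S^3\times I)\# W_1$ rel.\ boundary. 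By the additivity of the relative Kirby--Siebenmann obstruction and \cref{lem:natCS}, $\cs(f)$ is the image of $\cs(g)\in H^3((S^3\times I)\#W_1,\partial;\Z/2)\cong H_1(\,\cdot\,;\Z/2)=0$. This is no good, so instead we transport via the mapping torus.

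We realise $M$ as (up to the $W_1$ summand) the mapping torus of the identity of $S^3$, with the $W_1$ summand sitting in a slab. The nonzero class in $H^3(M;\Z/2)$ corresponds, under the identification $\cs\leftrightarrow\ks$ of $M\times I$ relative to its ends, to a $4$-manifold homeomorphism whose mapping torus (a $5$-manifold $T_f$ fibred over $S^1$) has $\ks(T_f)\neq 0$ evaluated on the class dual to $S^1$. Concretely, we take a fake smooth structure: there is an exotic (stably distinct concordance-wise) smooth structure on $S^3\times I\times$ something. More directly, we use the classical fact (Kirby--Siebenmann, $\pi_4(\Top/\mathrm{O})$ stably, $\Top_4/\mathrm{O}_4\to\Top/\mathrm{O}$, $\pi_3(\Top/\mathrm{PL})=\Z/2$) that there is a smoothing of $S^3\times\R\times S^1$ whose concordance class is nontrivial. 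Equivalently, there is a homeomorphism $h$ of $S^3\times S^1$ (isotopic to the identity topologically) that is not pseudo-isotopic to a diffeomorphism, with $\ks$ of the mapping torus nonzero. Lee's unpublished result supplies that, after one stabilisation by $W_1$, this can be realised so that it fixes a neighbourhood of $S^1\times\pt$ and acts trivially on $H_2$.

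The steps, in order, are as follows.
\begin{enumerate}
\item Identify $H^3(M;\Z/2)\cong\Z/2$ and show that $\cs$ is additive under composition, so that any homeomorphism hitting the generator works.
\item Produce a homeomorphism $f_0$ of $S^1\times S^3\# W_1$ with $\cs(f_0)\ne0$. To do this, take a smooth $5$-dimensional $h$-cobordism or an exotic smoothing of $S^1\times S^3\times I$ rel.\ ends; its nontrivial $\ks$ in $H^4(\,\cdot\,,\partial;\Z/2)\cong\Z/2$ exists by Kirby--Siebenmann realisation in dimension $5$, and its ends are stably diffeomorphic, giving $f_0$ after one stabilisation (this is Lee's input).
\item Correct $f_0$ on $H_2$. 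Since $f_{0*}$ is an isometry of the hyperbolic form, compose with a diffeomorphism realising the inverse isometry; this does not change $\cs$.
\item Isotope $f_0$ to fix $\ol\nu(S^1\times\pt)$. The circle's image is homotopic, hence topologically isotopic in dimension $4$, to the circle itself. Then adjust the framing using a diffeomorphism, since the framings form $\pi_1(\SO(3))$, which is realised by smooth twists. Topological isotopy does not change $\cs$ relative to the boundary.
\end{enumerate}

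The main obstacle is step 2, namely showing that a single $W_1$ suffices to realise the nontrivial invariant. This is exactly Lee's unpublished input, and I would cite \cite{galvin_2024} for it rather than reprove it.
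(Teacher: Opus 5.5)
Your proposal matches the paper's treatment. The paper gives no argument of its own beyond citing \cite{GalvinCS}*{Proposition~3.1}, which already includes the conditions $f_*=\Id$ and $f|_{\ol{\nu}(S^1\times \pt)}=\Id$, with Lee's work \cite{galvin_2024} supplying the single stabilisation. Your extra steps~3--4 correct the $H_2$-action and the framing of $S^1\times\pt$ by isotopies and compositions with diffeomorphisms, which cannot change whether $\cs$ vanishes. They are sound up to routine orientation bookkeeping, and redundant given that citation.

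Do discard the heuristics around step~2, though, since several are false.
\begin{enumerate}[label=(\roman*)]
\item A homeomorphism topologically isotopic to the identity has $\cs=0$: pulling back the product structure along $(x,t)\mapsto(h_t(x),t)$, for an isotopy $h_t$ from $\Id$ to $h$, extends the boundary smoothings over $M\times I$. So your parenthetical contradicts the goal.
\item A smooth $h$-cobordism has vanishing relative $\ks$.
\item The claimed equivalence with a homeomorphism of unstabilised $S^1\times S^3$ is unjustified. Converting Kirby--Siebenmann's $5$-dimensional smoothing of $S^1\times S^3\times\mathbb{R}$ into a $4$-dimensional homeomorphism uses Freedman--Quinn stable smoothing \cite{FQ} and Gompf's stable diffeomorphism \cite{Gompf-stable}, each costing stabilisations. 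Bounding these by one is exactly Lee's contribution.
\end{enumerate}
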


We now recall the technique from~\cite{GalvinCS} for using the homeomorphism from~\cref{prop:ronnielee} to kill the Casson-Sullivan invariant of $F$, at the expense of stabilising the manifold once. 

\medskip

Suppose $\widehat{F}\colon (X,\ol{\nu}\Sigma_1)\xrightarrow{\cong_{C^0, C^\infty}}
 (X,\ol{\nu}\Sigma_2)$ is a homeomorphism that is smooth near~$\Sigma_1$.
Let~$\mu\subseteq \mathring{X}_1$ be an embedded loop in the interior of $X_1$. We consider the circle sum operation of $X_1$ and~$X_2$ with $(S^1\times S^3)\# W_1$, along $\mu$ and $F(\mu)$ respectively, to obtain
\[
X_1 \#_{\mu= S^1\times\mathrm{pt}} \big((S^1 \times S^3) \# W_1\big) \cong X_1 \# W_1
\] 
and
\[
X_2 \#_{F(\mu)= S^1\times\mathrm{pt}} \big((S^1 \times S^3) \# W_1\big) \cong X_2 \# W_1.
\] 
Recall that $F\colon X_1\to X_2$ is the restriction of $\wh{F}$ to the surface exteriors. As the homeomorphism
\[
f \colon (S^1 \times S^3) \# W_1 \xrightarrow{\cong_{C^0}} (S^1 \times S^3) \# W_1,
\]
from \cref{prop:ronnielee} fixes a neighbourhood of $S^1\times\mathrm{pt}$ pointwise, it makes sense to extend the homeomorphism $F$ over the circle-summed manifolds to obtain a homeomorphism
 \[
    G':= F\#_{\mu= S^1\times\mathrm{pt}}f\colon X_1\# W_1 \xrightarrow{\cong_{C^0}}  X_2\# W_1. 
    \]

\begin{theorem}\label{thm:galvin-Thm-32}
 Suppose $\widehat{F}\colon (X,\ol{\nu}\Sigma_1)\xrightarrow{\cong_{C^0, C^\infty}}
 (X,\ol{\nu}\Sigma_2)$ is a homeomorphism that is smooth near~$\Sigma_1$, and that $\widehat{F}$ is topologically isotopic rel.~boundary to a diffeomorphism. Let $\mu\subseteq \mathring{X}_1$ be an embedded loop homologous to $\PD(\cs(F))$.  Then the circle-summed homeomorphism along~$\mu$ 
    \[
    G':= F\#_{\mu= S^1\times\mathrm{pt}}f\colon X_1\# W_1 \xrightarrow{\cong_{C^0}}  X_2\# W_1. 
    \]
    has trivial Casson-Sullivan invariant.
\end{theorem}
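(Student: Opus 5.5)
The plan is to compute $\cs(G')$ by splitting $X_1\# W_1$ along the boundary of a neighbourhood of the circle-sum region, and to use additivity of the Casson--Sullivan invariant together with \cref{lem:natCS}.

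Write $\ol{\nu}\mu\cong S^1\times D^3$. Then
\[
X_1\# W_1 = (X_1\sm \nu\mu)\cup_{S^1\times S^2} P,
\]
where $P:=\big((S^1\times S^3)\# W_1\big)\sm \nu(S^1\times\pt)$. The map $G'$ restricts to $F$ on $X_1\sm\nu\mu$ and to $f|_P$ on $P$. Both restrictions are the identity or smooth near the gluing region, since $f$ fixes $\ol\nu(S^1\times\pt)$ pointwise. We may first isotope $F$ so that it is a diffeomorphism near $\mu$: a homeomorphism is smoothable near a circle, and one can then use isotopy extension.

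The key steps are as follows.

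\textbf{Step 1: additivity.} Let $Z:=\partial(X_1\sm\nu\mu)\cup\partial P$. The Mayer--Vietoris argument from \cref{prop:gammas}, applied to $(X_1\#W_1, Z)$, gives an isomorphism
\[
H^3(X_1\#W_1,Z)\to H^3(X_1\sm\nu\mu,\partial)\oplus H^3(P,\partial P).
\]
By \cref{lem:natCS}, this sends $\cs(G' \text{ rel. } \nu Z)$ to $(\cs(F|),\cs(f|_P))$. Mapping along $H^3(X_1\#W_1,Z)\to H^3(X_1\#W_1,\partial)$ sends it to $\cs(G')$, again by \cref{lem:natCS}.

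\textbf{Step 2: dualise.} After Poincar\'e--Lefschetz duality, $\cs(G')$ is the image in $H_1(X_1\#W_1)$ of the sum of the duals of these two pieces. Since $H_1(X_1\#W_1)\cong H_1(X_1)$, the first piece contributes $\PD(\cs(F))$. The inclusion of $X_1\sm\nu\mu$ only removes a circle, so it is surjective on $H_1$, and naturality under the restriction $H^3(X_1,\partial)\to H^3(X_1\sm\nu\mu,\partial)$ applies. For the second piece, $\cs(f)\neq 0$ in $H^3(S^1\times S^3\#W_1)\cong\Z/2$, and its dual generates $H_1\cong\Z/2$, which is represented by $S^1\times\pt$. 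By the same naturality, $\PD(\cs(f|_P))$ is the push-off of $S^1\times\pt$ into $P$. Under the gluing, this push-off is identified with a parallel copy of $\mu$, so its class in $H_1(X_1\#W_1)$ is $[\mu]$.

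\textbf{Step 3: conclude.} We get
\[
\PD(\cs(G'))=\PD(\cs(F))+[\mu]=2\,\PD(\cs(F))=0
\]
with $\Z/2$ coefficients, so $\cs(G')=0$.

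The hypothesis that $\wh F$ is isotopic rel.\ boundary to a diffeomorphism is used only to ensure that $F$ is smooth on $\partial X_1$, so that $\cs(F)$ is defined, as in \cref{prop:gammas}.

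The main obstacle is Step 2: making rigorous that the duality isomorphisms for the pieces, which are relative to their boundaries, are compatible with the map to $H_1(X_1\#W_1)$. This is the same bookkeeping as the large diagram in \cref{prop:gammas}. I would handle it by working in $H^3(X_1\#W_1,\partial)\cong H_1(X_1\#W_1)$ directly and checking the image of each summand via the transfer maps $H^3(\text{piece},\partial)\to H^3(X_1\#W_1,\partial)$, which are extension by zero and are dual to inclusion on $H_1$.
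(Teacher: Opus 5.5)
Your argument is correct, but it does something different from the paper. The paper's proof is a one-line appeal to \cite{GalvinCS}*{Theorem~3.2}, the general statement that circle-summing with $f$ along a loop dual to the Casson--Sullivan class kills that class.

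What you have written is in effect a self-contained proof of that cited result in the present setting, using only tools already in the paper:
\begin{enumerate}
\item the relative Mayer--Vietoris splitting together with Lemma~\ref{lem:natCS}, exactly as in the proof of Proposition~\ref{prop:gammas};
\item the fact that the extension-by-zero maps $H^3(\text{piece},\partial)\to H^3(X_1\# W_1,\partial)$ are Poincar\'e dual to inclusion on $H_1$.
\end{enumerate}
The citation is shorter and absorbs the routine technicalities. These are smoothing $F$ near $\mu$ by a normal-bundle argument as in Lemma~\ref{lem:make-homeo-smooth-on-surface-neighbourhood}, invariance of $\cs(F)$ under that interior isotopy, and framing choices in the circle sum. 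Your version instead makes the formula $\PD(\cs(F\#_\mu f))=\PD(\cs(F))+[\mu]$ explicit. It also shows that the isotopy hypothesis on $\wh F$ is used only to make $F$ smooth near $\partial X_1$.

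One point in Step~2 needs correcting. The comparison of $\cs(F|_{X_1\sm\nu\mu})$ with $\cs(F)$ must go through the extension-by-zero map
\[
H^3(X_1\sm\nu\mu,\partial)\cong H^3(X_1,\partial X_1\cup\ol{\nu}\mu)\to H^3(X_1,\partial X_1).
\]
Concretely, apply Lemma~\ref{lem:natCS} to $(X_1,\partial X_1)\subseteq(X_1,\partial X_1\cup\ol{\nu}\mu)$. This yields $\PD(\cs(F))=\iota_*\PD(\cs(F|_{X_1\sm\nu\mu}))$.

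The restriction you wrote, $H^3(X_1,\partial X_1)\to H^3(X_1\sm\nu\mu,\partial X_1)$, does not work. It is dual to $H_1(X_1)\to H_1(X_1,\ol{\nu}\mu)$, which kills $[\mu]=\PD(\cs(F))$. So it sends $\cs(F)$ to zero and would not determine the first contribution.

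The same issue arises for $f|_P$, where restriction lands in $H^3(P;\Z/2)=0$. What you need instead is $\PD(\cs(f))=\iota_*\PD(\cs(f|_P))$ in $H_1((S^1\times S^3)\# W_1;\Z/2)\cong\Z/2$. Since $H_1(P;\Z/2)\cong\Z/2$, this forces $\PD(\cs(f|_P))$ to be the class of the push-off of $S^1\times\pt$.

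Your final paragraph on transfer maps suggests this is what you intended, and with that reading Steps~2 and~3 go through. Note also that $H_1(X_1\sm\nu\mu)\to H_1(X_1)$ is in fact an isomorphism, though surjectivity is not what the argument uses.
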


\begin{proof}
    The result follows from a direct application of {\cite{GalvinCS}*{Theorem~3.2}}.
\end{proof}

\begin{corollary}\label{cor:G-prime-top-stably-PI-to-a-diffeo}
After some number $k-1$ of further stabilisations, $G' \# \Id_{W_{k-1}}$ is topologically pseudo-isotopic to a diffeomorphism, denoted  
 \[
G \colon X_1 \# W_k \xrightarrow{\cong_{C^{\infty}}} X_2 \# W_k.
 \]
\end{corollary}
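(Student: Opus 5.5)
By \cref{thm:galvin-Thm-32}, the homeomorphism $G'\colon X_1\# W_1\to X_2\# W_1$ has $\cs(G')=0$. Here $\cs(G')$ is taken relative to the boundary. This makes sense because $G'$ restricts to $F|_{\partial X_1}$ on $\partial(X_1\# W_1)=\partial X_1$, and that restriction is a diffeomorphism, since $\wh{F}$ is smooth near $\Sigma_1$ and smooth on $\partial X$. So the plan is to apply \cref{prop:key-property-of-CS} directly to $G'$, with $M_1=X_1\# W_1$ and $M_2=X_2\# W_1$. Both are compact smooth $4$-manifolds (with corners, which we smooth in the standard way; the corners play no role). The proposition then says that $G'$ is stably pseudo-isotopic to a diffeomorphism.

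Next we unwind what this gives, in the sense of \cref{defn:pseudo-isotopy}. There is some $m\geq 0$ and a diffeomorphism $G\colon X_1\# W_{1+m}\to X_2\# W_{1+m}$ such that $G'\#\Id_{W_m}$ and $G$ are pseudo-isotopic. Setting $k:=m+1$, this is the claim. We must check two points.

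\begin{enumerate}[label=(\alph*)]
\item The pseudo-isotopy is topological. We need only a topological pseudo-isotopy, since one of its ends, $G'\#\Id_{W_{k-1}}$, is merely a homeomorphism. The output of \cref{prop:key-property-of-CS} (via \cite{FQ}*{Theorem~8.6} and \cite{GalvinCS}) is a homeomorphism $\Psi$ of the product cobordism. It restricts to the stabilised $G'$ at one end and to a diffeomorphism at the other, and it is a product on the boundary. The statement's wording ``smoothly stably pseudo-isotopic'' should therefore be read as ``topologically pseudo-isotopic to a smooth map''. This is exactly the corollary's assertion.
\item The stabilisation convention is consistent. Stabilising $G'$ by $\Id_{W_{k-1}}$ requires $G'$ to fix a ball. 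We take the ball in a region where $F$ is smooth, for instance near the boundary collar, or inside the $W_1$ summand away from the support of $f$. After a small isotopy this makes $G'\#\Id_{W_{k-1}}$ well defined and still equal to $F$ on the boundary.
\end{enumerate}

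The main subtlety is point (a): the pseudo-isotopy must be topological, and it must be rel.\ boundary in our sense, so that restricting to $\partial$ gives the product of $F|_{\partial X_1}$. The boundary condition is built into the relative Kirby--Siebenmann obstruction used to define $\cs$. Indeed, the smoothing of $M_1\times I$ extends the given smooth structure on $\nu(\partial M_1)\times I$. Transporting this smoothing by the product structure then produces the desired $\Psi$, which is a product on the boundary. Once these two points are checked, the result follows formally.
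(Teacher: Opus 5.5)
Your proposal is correct and is the paper's argument: \cref{thm:galvin-Thm-32} gives $\cs(G')=0$, and \cref{prop:key-property-of-CS} then gives a topological pseudo-isotopy from $G'\#\Id_{W_{k-1}}$ to a diffeomorphism; reading ``smoothly stably pseudo-isotopic'' as ``after stabilisation, topologically pseudo-isotopic to a diffeomorphism'' is the intended meaning. Your closing gloss is not needed, since the cited proposition already includes the boundary condition, and it oversimplifies: extending the smoothing over $M_1\times I$ only gives a smooth $h$-cobordism, and making it a product is exactly where stabilisation enters.
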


\begin{proof}
Combine \cref{prop:key-property-of-CS} and \cref{thm:galvin-Thm-32}.
\end{proof}

\section{Obtaining a diffeomorphism of \texorpdfstring{$X\# W_k$}{X stabilised} sending \texorpdfstring{$\Sigma_1$}{one surface} to \texorpdfstring{$\Sigma_2$}{the other} that is topologically pseudo-isotopic to the identity }\label{section:obtaining-diffeo-top-PI-to-Id}

Suppose $\widehat{F}\colon (X,\ol{\nu}\Sigma_1)\xrightarrow{\cong_{C^0, C^\infty}}
 (X,\ol{\nu}\Sigma_2)$ is a homeomorphism that is smooth near~$\Sigma_1$, that is also topologically isotopic rel.~boundary to the identity. The outcome of the previous section is that, after a single stabilisation of~$X_1$ and~$X_2$, there is a way to extend $F\colon X_1\to X_2$, the restriction of~$\wh{F}$, across the added $W_1 = S^2\times S^2$, in such a way that the resulting homeomorphism $G'\colon X_1\# W_1 \to X_1\# W_1$ has trivial Casson-Sullivan invariant. This means we may stabilise~$G'$ and obtain a homeomorphism topologically isotopic, rel.~boundary, to a diffeomorphism $G$ between the stabilised exteriors. This can then be filled back in to give a diffeomorphism of stabilised $X$. In this section, we will argue that, because the original $\widehat{F}$ was topologically isotopic rel.~boundary to the identity, the eventual diffeomorphism of the stabilised $X$ can be chosen so as to be topologically pseudo-isotopic to the identity.
 
\medskip

We will need the following lemma in this section.

\begin{lemma}\label{lem:nice-P-subset-means-isotopic-homeos}
    If $g,h\colon M \xrightarrow{\cong_{C^0}} M$ are two orientation-preserving homeomorphisms of a compact $4$-manifold $M$ such that $g=h$ on $M\sm \mathring{P}$, where $P$ is a compact, simply-connected codimension~$0$ submanifold $P\subseteq \mathring{M}$, with $\partial P$ connected, $\dim H_1(\partial P;\Q)\leq 1$, and $g_*=h_*\colon H_2(P)\to H_2(P)$, then $g$ is topologically isotopic rel.\ boundary to $h$.
\end{lemma}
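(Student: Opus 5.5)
Since $g$ and $h$ agree on $M\sm\mathring{P}$, we reduce to a problem on $P$ and apply the Orson--Powell classification~\cite{Orson-Powell} of homeomorphisms of compact simply-connected $4$-manifolds up to topological isotopy rel.\ boundary. Set $\phi:=h^{-1}\circ g\colon M\to M$. This homeomorphism is the identity on $M\sm\mathring{P}$, so it restricts to a homeomorphism $\phi|_P\colon P\to P$ that is the identity on $\partial P$. If we find a topological isotopy rel.\ $\partial P$ from $\phi|_P$ to $\Id_P$, it extends by the identity over $M\sm\mathring{P}$ to an isotopy rel.\ $\partial M$ from $\phi$ to $\Id_M$. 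Composing with $h$ then gives an isotopy rel.\ boundary from $g$ to $h$. So the lemma reduces to showing that $\phi|_P$ is topologically isotopic rel.\ boundary to $\Id_P$.

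For this we use the Orson--Powell theorem. For $P$ compact and simply-connected, it says that a homeomorphism fixing $\partial P$ pointwise is topologically isotopic rel.\ boundary to the identity if and only if it induces the identity on $H_2(P)$ and a further obstruction vanishes. This obstruction is a variation-type invariant, roughly a map $H_2(P,\partial P)\to H_2(P)$ or equivalently a map into $H_1(\partial P)$, measuring the interaction with the boundary. The hypothesis $g_*=h_*$ on $H_2(P)$ gives $(\phi|_P)_*=\Id$ on $H_2(P)$, since $\phi_*=h_*^{-1}g_*$. (Here we use that $h$ preserves $P$, because it preserves $M\sm\mathring{P}$.) Orientation-preservation of $g$ and $h$ ensures that $\phi$ preserves orientation, as the classification requires.

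The main obstacle is the boundary term. The hypotheses that $\partial P$ is connected and $\dim H_1(\partial P;\Q)\le 1$ are exactly the conditions under which Orson--Powell show that the action on $H_2(P)$ alone detects the isotopy class rel.\ boundary. More precisely, their result states that the variation obstruction is automatically determined by the action on second homology when $H_1(\partial P;\Q)$ has rank at most one (for example, when $\partial P$ is a rational homology sphere or has $b_1=1$). I would cite that corollary directly. If the statement available there instead requires a variation map, I would compute the variation of $\phi|_P$ as follows. Since $\phi$ extends by the identity, the variation is determined by $\phi_*-\Id$ on $H_2(P)$ up to a term in $H_1(\partial P)$ that is constrained by the rank condition. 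I would then check that this leftover term vanishes, using that $P$ is simply connected together with the long exact sequence of $(P,\partial P)$, so that $H_2(P,\partial P)\to H_1(\partial P)$ has finite cokernel. This bookkeeping step is the one I expect to need the most care.
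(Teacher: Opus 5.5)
Your proposal is correct and is essentially the paper's proof: the paper applies Orson--Powell's Corollary~C (whose hypotheses are exactly connected $\partial P$ with $\dim H_1(\partial P;\Q)\leq 1$) to get an isotopy rel.~boundary on $P$, then extends it by the constant isotopy on $M\sm\mathring{P}$. Your passage to $\phi=h^{-1}\circ g$ is a harmless repackaging and your fallback variation computation is not needed; one small slip is that $h$ preserving $M\sm\mathring{P}$ does not follow from $g=h$ there (it is implicit in the hypothesis $g_*=h_*\colon H_2(P)\to H_2(P)$, and in any case your argument only uses $g(P)=h(P)$, which is automatic).
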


\begin{proof}
    Apply~\cite{Orson-Powell}*{Corollary~C} to conclude that $g|_P$ and $h|_P$ are topologically isotopic rel.~boundary. Extend this by the identity isotopy to the rest of $M$ to conclude that $g$ is topologically isotopic rel.~boundary to~$h$.
\end{proof}

We describe a standard way to ``fill'' a homeomorphism between the surface-exteriors back to a homeomorphism of the whole manifold.

\begin{definition}\label{remark:convention-on-hats}
    Suppose $\widehat{F}\colon (X,\ol{\nu}\Sigma_1)\xrightarrow{\cong_{C^0, C^\infty}}
 (X,\ol{\nu}\Sigma_2)$ is a homeomorphism that is smooth near~$\Sigma_1$.
    Let $n\geq 0$. For any homeomorphism $G\colon X_1\# W_n\to X_2\# W_n$ such that $G=\wh{F}$ on~$\partial_1 X_1$, define the extension \[\wh{G}:= G\cup \wh{F}\vert_{\nu \Sigma_1}\colon X\# W_n \to X\# W_n.\]  If $G$ is a diffeomorphism then $\wh{G}$ is also a diffeomorphism. Note $\wh{G}$ sends $\Sigma_1$ to~$\Sigma_2$. 
\end{definition}

\begin{lemma}\label{lem:wh-G-top-PI-to-Id}
Suppose $\widehat{F}\colon (X,\ol{\nu}\Sigma_1)\xrightarrow{\cong_{C^0, C^\infty}}
 (X,\ol{\nu}\Sigma_2)$ is a homeomorphism that is smooth near~$\Sigma_1$, and that $\widehat{F}$ is topologically isotopic rel.~boundary to $\Id_X$. Then for some choice of embedded loop~$\mu\subseteq \mathring{X}_1$
 homologous to $\PD(\cs(F))$, the resultant $G'$ from~\cref{thm:galvin-Thm-32} and corresponding
    \[
    G \colon X_1 \# W_k \xrightarrow{\cong_{C^{\infty}}} X_2 \# W_k
    \]
    from~\cref{cor:G-prime-top-stably-PI-to-a-diffeo} are such that the diffeomorphism $\wh{G} \colon X\# W_k \xrasmooth X\# W_k$ is topologically pseudo-isotopic to $\Id_{X\# W_k}$.
\end{lemma}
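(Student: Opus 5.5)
The plan follows the strategy sketched in the introduction. First, choose $\mu$ carefully. By \cref{prop:gammas}, $\PD(\cs(F)) = \sum_{j=1}^N [\gamma_j]$, where the $\gamma_j$ are meridians to pairwise distinct components $\Sigma_1^j$. The hypothesis of that proposition holds, since $\Id_X$ is a diffeomorphism. Take $\mu$ to be a single embedded loop in $\mathring{X}_1$ obtained by pushing the meridians $\gamma_j$ slightly into the interior and band-summing them along arcs in a small neighbourhood of $\ol{\nu}\Sigma_1$. Then $\mu$ is homologous to $\PD(\cs(F))$, and $\mu$ lies in a small neighbourhood of the union of a few normal discs together with arcs.

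With this choice, $\mu$ bounds an embedded disc (in fact a union of normal discs banded together) meeting $\Sigma_1$ transversely. Hence $\mu$ is null-homotopic in $X$, though not in $X_1$.

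Next, compare $\wh{G'}$ with the stabilisation $\wh{F}\#\Id_{W_1}$. Here $\wh{G'} := G'\cup \wh{F}|_{\nu\Sigma_1}$ is a homeomorphism of $X\#W_1$. In $X$, the circle sum along $\mu$ takes place inside a regular neighbourhood $B$ of $\mu$ together with its null-homotopy disc. We can arrange that $B\cup(\text{the circle-summed piece})$ is a compact codimension-zero submanifold
\[
P\cong \big(\text{neighbourhood of }D\big)\cup\big((S^1\times S^3)\# W_1 \setminus \nu(S^1\times\pt)\big)
\]
inside $X\#W_1$. Here $D$ is the disc, so $\nu D \cong D^4$, and $P$ is obtained from $S^1\times D^3$ with the $S^1$ null-homotopic by gluing in. Concretely, $P\cong (D^2\times D^2 \text{ union})\ldots$. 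The cleanest choice is $P = D^4\# W_1$, i.e.\ a ball containing $\mu$ and a spanning disc, circle-summed. This needs $\mu$ to lie in a ball in $X$ disjoint from everything relevant. The difficulty is that $\wh{F}$ is not the identity near the normal discs.

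To handle this, first modify $\wh{F}$. Since $\wh{F}$ is topologically isotopic rel.\ boundary to $\Id_X$, we can use the isotopy and the fact that $\mu$ lies near normal discs: after an isotopy of $\wh{F}$ supported near $\Sigma_1$, keeping it smooth near $\Sigma_1$, we may assume $\wh{F}$ is the identity on a small ball $B\supseteq \mu\cup D$ centred on a point of each $\Sigma_1^j$, joined by tubes. Alternatively, and more simply, note that $\wh{G'}$ and $\wh{F}\#\Id_{W_1}$ agree outside $P := B\#W_1$ (circle sum inside $B$), where $B$ is a $4$-ball containing $\mu$ and the discs, on which $\wh{F}$ restricts to a map $B\to\wh{F}(B)$.

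Then $P$ is simply connected, $\partial P = S^3$ is connected with $H_1=0$, and on $H_2(P) = H_2(W_1)$ both maps act as the identity, since $f_*=\Id$ by \cref{prop:ronnielee}. \Cref{lem:nice-P-subset-means-isotopic-homeos}, applied to $\wh{G'}\circ(\wh{F}\#\Id)^{-1}$ and the identity, then gives that $\wh{G'}$ is topologically isotopic rel.\ boundary to $\wh{F}\#\Id_{W_1}$, hence to $\Id_{X\#W_1}$.

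Finally, by \cref{cor:G-prime-top-stably-PI-to-a-diffeo}, $G'\#\Id_{W_{k-1}}$ is topologically pseudo-isotopic to $G$ rel.\ boundary. Gluing in the product pseudo-isotopy $\wh{F}|_{\nu\Sigma_1}\times I$ gives a topological pseudo-isotopy from $\wh{G'}\#\Id$ to $\wh{G}$. Concatenating this with the isotopy to the identity yields the claim.

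The main obstacle is arranging that $\wh{F}$ is the identity on a ball containing $\mu$, so that the two maps genuinely agree off a simply connected $P$. I would first try to place the balls at points of $\Sigma_1$ where $\wh{F}$ can be isotoped, smoothly near $\Sigma_1$, to the identity on a normal disc neighbourhood.
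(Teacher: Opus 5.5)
Your argument is correct and is essentially the paper's proof. As in the paper, you take $\mu$ to be a band sum of the meridians from \cref{prop:gammas}, take $P$ to be a neighbourhood of the meridional discs and connecting arcs circle-summed with $W_1$, apply \cref{lem:nice-P-subset-means-isotopic-homeos} using $f_*=\Id$, and concatenate with the filled-in pseudo-isotopy from \cref{cor:G-prime-top-stably-PI-to-a-diffeo}.

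The obstacle you flag at the end is not real. Your ``alternative'' route already works without making $\wh{F}$ the identity on the ball, provided you apply the lemma to $(\wh{F}\#\Id_{W_1})^{-1}\circ\wh{G}'$, which is supported in $B\#W_1$; the order you wrote is supported in $\wh{F}(B)\#W_1$ instead. Also, the connecting arcs can be arbitrary arcs in $X_1$, and between distinct components they cannot stay near $\ol{\nu}\Sigma_1$ anyway.
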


\begin{remark}
It is worth recalling again that our convention is isotopies are explicitly specified to be rel.\ boundary when relevant, but that all pseudo-isotopies are by definition rel.\ boundary, so we do not need to specify this property each time.
\end{remark}

\begin{proof}
By definition, $\wh{F}$, and hence also suitable stabilisations thereof, is topologically isotopic rel.\ boundary to the identity.  So $\Id_{X\# W_k}$ and $\wh{F} \# \Id_{W_k}$ are topologically isotopic rel.\ boundary. 

We now choose $\mu$. By~\cref{prop:gammas}, we may represent the homology class $\PD(\cs(F))$ by the sum $\sum_{j=1}^N[\gamma_j] \in H_1(X_1;\Z/2)$, where
$\{\gamma_j\}_{j=1}^N$ is a collection of meridians to $\Sigma$. 
For each $j\in\{1,\dots,{N-1}\}$, choose a smoothly embedded path $\alpha_{j,j+1}\subseteq X_1$ from $\gamma_j$ to $\gamma_{j+1}$. Perform band sums using this arc collection, and push this curve slightly off $\partial X_1$, to yield the desired~$\mu$. Observe that the map $G'$ from~\cref{thm:galvin-Thm-32} and $\wh{F}\# \Id_{W_1}$ agree upon restriction to
\[
\big(X_1\# W_1\big)\sm(\nu(\mu)\# W_1).
\]
In particular $G'=\wh{F}$ on $\partial_1X_1$, so we may extend $G'$ over the tubular neighbourhood using~$\wh{F}$, to define $\widehat{G}'$.

Now, for each $j\in\{1,\dots,{N-1}\}$, let $d_j\subseteq X$ be a meridional disc to the connected component~$\Sigma^j_1$ with boundary the meridian $\gamma_j$. We define
\[
V := \nu\Big(\bigcup_{j=1}^N  d_j\cup \bigcup_{j=1}^{N-1} \alpha_{j,j+1}\Big) \# W_1.
\]
\cref{lem:nice-P-subset-means-isotopic-homeos} may now be applied to the maps $\wh{G}'$ and $\wh{F}\#\Id_{W_1}$, with $M=X\#W_1$ and $P=V$ in the language of that lemma. To see this, note that since~$\nu\mu\subseteq V$, we have $\wh{G}'=\wh{F}\#\Id_{W_1}$ on $X\sm V$. Moreover, $\pi_1(V) = \{1\}$, $\partial P\cong S^3$ and $(\wh{G}')_*=\Id=(\wh{F}\#\Id_{W_1})_*$ on~$H_2(P)$. 
Thus \cref{lem:nice-P-subset-means-isotopic-homeos} implies that $\wh{F} \# \Id_{W_1}$ and $\wh{G}'$ are topologically isotopic rel.\ boundary.
Thus $\wh{F} \# \Id_{W_k}$ and $\wh{G}' \# \Id_{W_{k-1}}$ are topologically isotopic rel.\ boundary. 

Finally since $G' \# \Id_{W_{k-1}}$ and $G$  are topologically pseudo-isotopic by \cref{cor:G-prime-top-stably-PI-to-a-diffeo}, it follows that after filling in we have that 
$\wh{G}' \# \Id_{W_{k-1}}$ and $\wh{G}$ are topologically pseudo-isotopic. 

We consider the two isotopies produced above as pseudo-isotopies, and 
concatenate all three pseudo-isotopies to obtain the desired pseudo-isotopy between $\wh{G}$ and~$\Id_{X\# W_k}$. 
\end{proof}

In the next lemma we make use of the topological Hatcher--Wagoner obstruction $\Sigma^{\Top}(\Psi) \in \Wh_2(\pi)$ of a topological pseudo-isotopy $\Psi \colon X \times I \to X \times I$, developed by Nonino and the first-named author~\cite{Galvin:2025aa}. 

\begin{lemma}\label{lem:sigma-comes-from-exterior}
Let 
\[
\Psi \colon (X \# W_k) \times I \xrightarrow{\cong_{C^{0}}} (X \# W_k) \times I
\]
be a pseudo-isotopy as produced by \cref{lem:wh-G-top-PI-to-Id}. 
    Then the primary Hatcher--Wagoner obstruction~$\Sigma^{\Top}(\Psi)$ lies in the image of
    \[
    \Wh_2(\pi_1(X_1)) \to \Wh_2(\pi).
    \]
\end{lemma}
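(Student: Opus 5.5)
The pseudo-isotopy $\Psi$ from \cref{lem:wh-G-top-PI-to-Id} is a concatenation of three pieces. I would compute $\Sigma^{\Top}$ of each piece and use additivity of the Hatcher--Wagoner obstruction under concatenation, which I take as known from \cite{Galvin:2025aa}.

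\textbf{Step 1: the two isotopies.} The first piece is a topological isotopy rel.\ boundary from $\Id_{X\# W_k}$ to $\wh{F}\#\Id_{W_k}$. The second is an isotopy from $\wh{F}\#\Id_{W_k}$ to $\wh{G}'\#\Id_{W_{k-1}}$. Regarded as pseudo-isotopies, both are level-preserving. For a level-preserving pseudo-isotopy, the parametrised Cerf-theoretic data (the product structure of $X\times I$ pulled back) has no critical points. Hence $\Sigma^{\Top}$ of each of these pieces is zero.

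\textbf{Step 2: the third piece.} The remaining piece is a topological pseudo-isotopy from $\wh{G}'\#\Id_{W_{k-1}}$ to $\wh{G}$. By construction it is obtained from a pseudo-isotopy $\Phi$ on the exterior, from $G'\#\Id_{W_{k-1}}$ to $G$ on $X_1\# W_k$ (\cref{cor:G-prime-top-stably-PI-to-a-diffeo}), extended by the product $\wh{F}|_{\ol{\nu}\Sigma_1}\times \Id_I$ on $\ol{\nu}\Sigma_1\times I$.

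I would then use naturality of $\Sigma^{\Top}$ under codimension-zero inclusions, in the form of a relative version. If a pseudo-isotopy is supported in $(X_1\# W_k)\times I$ and is the product on the complement, its obstruction is the image of the obstruction of the restricted pseudo-isotopy under $\Wh_2(\pi_1(X_1\# W_k))\to \Wh_2(\pi)$. Here $\pi_1(X_1\# W_k)=\pi_1(X_1)$.

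The point is that the Hatcher--Wagoner invariant is computed from a generic one-parameter family of handle structures, or from the associated Morse-function data, on $X\times I$ relative to the product. Since the product region contributes no handles, the Steinberg-group element is formed entirely from handles in the exterior. It therefore lifts to $\mathrm{St}(\Z[\pi_1(X_1)])$ and maps to $\Sigma^{\Top}$ under the induced map.

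One subtlety is that $\Phi$ does not start at the identity. I would handle this by composing with the inverse of the fixed homeomorphism $G'\#\Id_{W_{k-1}}$. Alternatively, I would work with the invariant of the difference, which is what additivity and naturality in \cite{Galvin:2025aa} are phrased for.

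\textbf{Step 3: conclusion.} Summing the three contributions gives $\Sigma^{\Top}(\Psi)\in\im(\Wh_2(\pi_1(X_1))\to\Wh_2(\pi))$.

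\textbf{Main obstacle.} The hard step is the naturality under the inclusion $X_1\# W_k\subseteq X\# W_k$, because $X_1$ has boundary and corners along $\partial_1 X_1$. I would first check whether \cite{Galvin:2025aa} states such a naturality (or a gluing formula) explicitly. If it does not, I would prove it directly from the definition: take the topological handle decomposition of $\Phi$ rel.\ the product on $\partial X_1\times I$, and glue on the product $\ol{\nu}\Sigma_1\times I$. This gives a decomposition for the third piece of $\Psi$ whose Whitehead-group data is the pushforward.
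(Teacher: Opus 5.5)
Your proposal is correct and follows essentially the same route as the paper: split $\Psi$ into the two topological isotopies, which contribute nothing to $\Sigma^{\Top}$, and the third pseudo-isotopy, which is filled in from the exterior. For that third piece, naturality of $\Sigma^{\Top}$ under codimension-zero inclusions, which the paper cites as Proposition~1.3 of \cite{Galvin:2025aa}, places the obstruction in the image of $\Wh_2(\pi_1(X_1))\to\Wh_2(\pi)$; your extra remarks on additivity and on the third piece not starting at the identity just make explicit what the paper's brief proof leaves implicit.
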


\begin{proof}
    This follows from the construction of $\Psi$. The first two pseudo-isotopies in the construction come from topological isotopies, so these have vanishing obstructions in $\Wh_2(\pi)$. The final pseudo-isotopy is obtained from a pseudo-isotopy of $X_1 \# W_k$ by filling in the tubular neighbourhoods of the $\Sigma_i$, and hence by naturality of $\Sigma^{\Top}$ \cite{Galvin:2025aa}*{Proposition 1.3} we see that $\Sigma^{\Top}(\Psi)$ lies in the image of $\Wh_2(\pi_1(X_1)) \to \Wh_2(\pi)$, as desired. 
\end{proof}

\section{Improving \texorpdfstring{$\Psi$}{the topological pseudo-isotopy} to a smooth pseudo-isotopy after further stabilisations}\label{section:improving-top-PI-to-smooth-PI}

The next step will be to improve our topological pseudo-isotopy $\Psi$ into a smooth pseudo-isotopy, possibly after further stabilising $X \# W_k$ to $X \# W_m$ for some $m \geq k$.    
Before splitting into cases, we collect some results that will be useful during the proofs.

\subsection{The smoothing obstruction for topological pseudo-isotopies}
Given a smooth, compact 4-manifold $U$ and a topological pseudo-isotopy $F \colon U \times I \to U \times I$ that is smooth near~$\partial(U \times I)$, Orson--Powell--Randal-Williams defined an obstruction $\KS(F) \in H_2(U;\Z/2)$, as follows.  Let $\sigma$ denote the product smooth structure on $U \times I$. We consider $U \times I \times I$ as a topological manifold and place a smooth structure on its boundary.  Endow $U \times I \times \{1\}$ with the pullback smooth structure~$F^*\sigma$, and then use the standard structure on the rest of $\partial(U \times I \times I)$. Denote the resulting smooth manifold by $(U \times I \times I)_F$. We then consider the Kirby-Siebenmann obstruction to extending this smooth structure over $X \times I \times I$:
\[\ks(U \times I \times I,\partial(U \times I \times I)_F) \in H^4(X \times I \times I,\partial;\Z/2).\]
Taking its Poincar\'{e} dual, and implicitly applying that $U \simeq U \times I \times I$, we obtain: 
\[\KS(F) := PD\big(\ks(U \times I \times I,\partial(U \times I \times I)_F)\big) \in  H_2(U;\Z/2).\]

The next proposition gives some useful properties of $\KS$, that it is a homomorphism, it detects the difference between smooth and topological pseudo-isotopies, and that it satisfies a gluing formula. 

\begin{proposition}\label{lem:properties-of-KS}
Let $Q(U)$ denote the group of topological pseudo-isotopies $F \colon U \times I \to U \times I$ such that $F_1:= F|_{U \times \{1\}}$ is smooth, up to topological pseudo-isotopy relative a smooth pseudo-isotopy on $U \times I \times \{1\}$ and to the identity on the rest of $\partial(U \times I \times I)$. The group structure comes from composition.
    \begin{enumerate}[label=(\roman*)]
                \item\label{item:lem-properties-KS-i} The map $\KS$ is a homomorphism $\KS \colon Q(U) \to H_2(U;\Z/2)$.
        \item\label{item:lem-properties-KS-ii} We have that $\KS(F)=0$ if and only if $F$ is topologically isotopic rel.\ $\partial(U \times I)$ to a smooth pseudo-isotopy. 
        \item\label{item:lem-properties-KS-iii} 
        Suppose $M$ and $N$ are manifolds with corners, where $\partial M=\partial_0M\cup\partial_1M$, $\partial N=\partial_0N\cup\partial_1N$ and $\partial_1M=\partial_1N$. Let $U=M\cup_{\partial_1M=\partial_1N}N$.
        Let  $\wh{\Psi} \colon U \times I \xrightarrow{\cong_{C^0}} U \times I$ be a topological pseudo-isotopy that restricts to a smooth pseudo-isotopy $\Psi_M \colon M \times I \xrasmooth M \times I$ and to a topological pseudo-isotopy $\Psi_{N} \colon N \times I \xratop N\times I$. Then under the inclusion induced map 
        \begin{align*}
           (i_N)_*\colon H_2(N;\Z/2) &\to H_2(U;\Z/2) \\
            \KS(\Psi_{N}) &\mapsto \KS(\wh{\Psi}). 
        \end{align*} 
    \end{enumerate}
\end{proposition}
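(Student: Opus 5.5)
Each of the three properties will be derived from the description of $\KS(F)$ as a relative Kirby--Siebenmann obstruction, using the standard facts about $\ks$ from Kirby--Siebenmann: naturality under open inclusions, additivity under gluing, and the fact that it is the complete obstruction to extending a smooth structure in dimensions $\geq 5$. Here $U\times I\times I$ is $6$-dimensional. Throughout, $H^4(U\times I\times I,\partial;\Z/2)$ is identified with $H_2(U;\Z/2)$ by Poincar\'e--Lefschetz duality, as in the definition of $\KS$.

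\emph{Well-definedness and \ref{item:lem-properties-KS-i}.} First check that $\KS$ is constant on equivalence classes in $Q(U)$. A topological pseudo-isotopy $H$ between $F$ and $F'$, relative to a smooth pseudo-isotopy on the top face, gives a $7$-dimensional cobordism. On it, the smooth structures on the boundaries of $(U\times I\times I)_F$ and $(U\times I\times I)_{F'}$ are joined by a smooth structure on the remaining boundary. Naturality of $\ks$ then shows $\KS(F)=\KS(F')$.

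For the homomorphism property, represent the composite $F'\circ F$ by stacking $F$ and $F'$ in the second $I$-coordinate. Since pseudo-isotopies compose by concatenation up to topological isotopy, the resulting $U\times I\times[0,2]$ is a union of two copies of $U\times I\times I$. These are glued along $U\times I\times\{1\}$, which carries a smooth structure: the one pulled back by the middle map. Additivity of $\ks$ under gluing along a smooth codimension-one face gives $\KS(F'\circ F)=\KS(F)+\KS(F')$ in $\Z/2$-cohomology.

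\emph{Property \ref{item:lem-properties-KS-ii}.} The forward direction is clear. If $F$ is topologically isotopic rel.~$\partial$ to a smooth pseudo-isotopy $F'$, then $F^*\sigma$ and $F'^*\sigma$ are concordant rel.~boundary. The trace of the isotopy then extends the boundary structure over $U\times I\times I$, so $\ks=0$.

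For the converse, suppose $\ks=0$. Then the smooth structure on $\partial$ extends to a structure $\tau$ on $U\times I\times I$, which is a smooth concordance from $\sigma$ to $F^*\sigma$ rel.~$\partial(U\times I)$. By the concordance-implies-isotopy theorem (the product structure theorem, valid since $\dim(U\times I)=5$), there is a topological isotopy rel.~boundary, $\phi_t$, of $U\times I$ with $\phi_1^*F^*\sigma=\sigma$. Hence $F\circ\phi_1$ is a smooth pseudo-isotopy, topologically isotopic to $F$ rel.~$\partial(U\times I)$. The main care needed is at the top face $U\times I\times\{1\}$, where one must keep $F_1$ unchanged; this is handled by working rel.~$\partial$ throughout. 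I expect this step, together with making sure the relative version of concordance-implies-isotopy applies with corners, to be the main technical point.

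\emph{Property \ref{item:lem-properties-KS-iii}.} I would argue in the same way as \cref{lem:natCS}, using naturality of relative $\ks$ under inclusion of open submanifolds. Since $\Psi_M$ is smooth, the structure $\wh\Psi^*\sigma$ on $U\times I$ already agrees with $\sigma$ on $M\times I$. So the boundary structure on $(U\times I\times I)_{\wh\Psi}$ extends over $M\times I\times I$ by the product structure. The obstruction therefore lives in $H^4(U\times I\times I,\partial\cup M\times I\times I)$. By excision this group is $H^4(N\times I\times I,\partial)$, and there it equals $\ks$ for $(N\times I\times I)_{\Psi_N}$.

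Dualising turns the map $H^4(N\times I\times I,\partial)\to H^4(U\times I\times I,\partial)$, induced by the inclusion of pairs, into $(i_N)_*$ on $H_2$. This uses the same compatibility between Poincar\'e--Lefschetz duality and inclusions that was used in the proof of \cref{prop:gammas}. It gives $(i_N)_*\KS(\Psi_N)=\KS(\wh\Psi)$.
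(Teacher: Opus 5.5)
Your argument for (iii) is the paper's argument in different packaging. The paper writes $A=M\times I\times I$ and $B=N\times I\times I$, and uses the Mayer--Vietoris isomorphism $H^4(A\cup B,C\cup D;\Z/2)\cong H^4(A,C;\Z/2)\oplus H^4(B,D;\Z/2)$ followed by restriction to $H^4(A\cup B,\partial(A\cup B);\Z/2)$. It then discards the first summand because $\ks(A,C)=0$ when $\Psi_M$ is smooth. You instead extend over $M\times I\times I$ first and then excise, which is the same computation.

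One phrasing point: the map $H^4(N\times I\times I,\partial)\to H^4(U\times I\times I,\partial)$ is not literally induced by an inclusion of pairs, because $\partial_1N\times I\times I$ lies in the interior of $U\times I\times I$. It is the inverse of your excision isomorphism followed by restriction along $(U\times I\times I,\partial)\subseteq(U\times I\times I,\partial\cup M\times I\times I)$. That composite is what is dual to $(i_N)_*$.

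For (i) and (ii) the paper gives no argument and simply cites \cite{OPRW}*{Lemma~3.2} and \cite{OPRW}*{Theorem~A}. Your sketches are the standard smoothing-theory proofs:
\begin{itemize}
\item ks is the complete obstruction to extending a smooth structure over the $6$-manifold $U\times I\times I$;
\item concordance implies isotopy for the $5$-manifold $U\times I$ rel.\ boundary;
\item ks is invariant along a $7$-dimensional relative problem and additive under gluing along a smoothly structured slice.
\end{itemize}
These make the statement self-contained, at the cost of the corner and collar bookkeeping you already flag.

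The homomorphism step deserves one more line. In $U\times I\times[0,2]$ with structures $\sigma$, $F^*\sigma$, $(F'\circ F)^*\sigma$ at heights $0,1,2$, the upper block computes $\KS(F')$ only after pulling back along $F\times\Id_I$. You then need that $F$ acts trivially on $H_2(U\times I;\Z/2)$, which holds since $F_0=\Id$. Your appeal to concatenation in the first $I$-coordinate is not what is being used and can be dropped.
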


\begin{proof}
 Part \ref{item:lem-properties-KS-i} is proven in \cite{OPRW}*{Lemma~3.2}, and part \ref{item:lem-properties-KS-ii} is proven in \cite{OPRW}*{Theorem~A}. 
 We prove \ref{item:lem-properties-KS-iii}. Let 
 \begin{align*}
     A  := M \times I \times I,\;\;\; 
     B := N  \times I \times I,\;\;\;
     C := \partial A,\;\;\;
     D := \partial B,\text{ and }
     E  := C \cap D =  A \cap B.
 \end{align*}
\begin{figure}
    \centering
    \begin{tikzpicture}
        \draw[black, thick] (0,0) -- (8,0);
        \draw[black, thick] (0,3) -- (8,3);
        \draw[black, thick] (0,3) -- (0,0);
        \draw[black, thick] (0,1.5) -- (8,1.5);
        \draw[black, thick] (8,3) -- (8,0);
        \node at (1.6,2.25) {\(A=M\times I\times I\)};
        \node at (1.6,0.75) {\(B=N\times I\times I\)};
        \node at (5,1.74) {\(E=C\cap D=A\cap B\)};
        \node at (5,3.24) {\(C=\partial(M\times I\times I)\)};
        \node at (5,-0.26) {\(D=\partial(N\times I\times I)\)};
    \end{tikzpicture}
    \caption{A schematic for the decomposition used in the proof of \Cref{lem:properties-of-KS}.}
    \label{fig:schematic2}
\end{figure}
Endow $C\cup D$ with the standard smooth structure on $(U\times I\times\{0\})\cup((\partial M\cup\partial N)\times I\times[0,1))$ and on $U\times I\times\{1\}$ pull back the smooth structure using $\wh{\Psi}$. As the restriction $\wh{\Psi}|_{(\partial M\cup \partial N)\times I}$ is a diffeomorphism, this indeed determines a smooth structure on $C\cup D$. Consider the diagram
\[
\begin{tikzcd}
    H^4(A,C;\Z/2)\oplus H^4(B,D;\Z/2)\ar[d, "PD\oplus PD", "\cong"']
    &
    \ar[l, "\cong"'] H^4(A\cup B, C\cup D;\Z/2)\ar[r]
    &
    H^4(A\cup B,\partial(A\cup B);\Z/2)\ar[d,"PD", "\cong"']
    \\
    H_2(M;\Z/2)\oplus H_2(N;\Z/2)\ar[rr]
    &&
    H_2(U;\Z/2)
\end{tikzcd}
\]
where the top left map comes from the Mayer-Vietoris sequence
\[0= H^3(E,E) \to H^4(A\cup B, C\cup D;\Z/2) \xrightarrow{\cong }H^4(A,C;\Z/2)\oplus H^4(B,D;\Z/2) \to H^4(E,E)=0,\] 
noting that $E= A \cap B = C \cap D$. 
The top right map is induced by inclusion.  The diagram commutes by naturality and linearity of cap products.  As $C$ and $D$ have smooth structures, there are Kirby-Siebenmann invariants $\ks(A,C) \in H^4(A,C;\Z/2)$ and $\ks(B,D) \in H^4(B,D;\Z/2)$. By naturality of the Kirby-Siebenmann invariant, we have that \[(\ks(A,C),\ks(B,D))\mapsto \ks(A\cup B,C\cup D)\mapsto \ks(A\cup B,\partial(A\cup B))\] 
along the top row of the diagram. The clockwise composition around the diagram thus sends $(\ks(A,C),\ks(B,D))$ to~$\KS(\wh{\Psi})$.  The Kirby-Siebenmann invariant $\ks(A,C)$ vanishes because the restriction of $\wh{\Psi}$ to $M\times I$ is a diffeomorphism, so the standard smooth structure on $M\times I\times I$ extends that on the boundary~$C$. Thus the anticlockwise composition is $(\ks(A,C),\ks(B,D))=(0,\ks(B,D))\mapsto (0,\KS(\Psi_N))\mapsto 0+(i_N)_*\KS(\Psi_N)$. Thus $(i_N)_*\KS(\Psi_N)=\KS(\wh{\Psi})$ as claimed.
\end{proof}

We will also need realisation results for the smoothing obstruction $\KS$. The first one allows us to realise all possible values, at the expense of stabilising the $4$-manifold (which in the context of this article is no problem).  

\begin{theorem}\label{prop:realisation-of-OPRW-KS-obstruction} 
    There exists $m \geq k$ such that for every $x \in H_2(X_1\# W_m;\Z/2)$ there is a topological pseudo-isotopy
    \[
    \Phi \colon (X_1 \# W_m) \times I \xrightarrow{\cong_{C^0}} (X_1 \# W_m) \times I\]
    such that $\Phi|_{\sqsubset} = \Id_{\sqsubset}$, the restriction 
    \[
    \Phi|_{(X_1 \# W_m) \times \{1\}} \colon X_1\# W_m \times \{1\} \xrightarrow{\cong_{C^{\infty}}} X_1 \# W_m  \times \{1\}
    \]
    is a diffeomorphism, and $\KS(\Phi) =x \in H_2(X_1\# W_m;\Z/2)$.
\end{theorem}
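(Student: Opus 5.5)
The plan is to construct pseudo-isotopies realising a generating set of $H_2(X_1\# W_m;\Z/2)$ and then take composites, using that $\KS$ is a homomorphism on $Q(X_1\# W_m)$ (\cref{lem:properties-of-KS}\ref{item:lem-properties-KS-i}). Composites of pseudo-isotopies that are the identity on $\sqsubset$ and smooth at time $1$ keep both properties, so this reduction is harmless. The natural source of realisations is the relevant result of Orson--Powell--Randal-Williams~\cite{OPRW}. There the smoothing obstruction for topological pseudo-isotopies is shown to be realisable after stabilisation; one builds, for each class, a topological pseudo-isotopy of some stabilisation supported near a surface representing the class, whose time-one end is a diffeomorphism. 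We would cite that realisation theorem directly.

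\textbf{Step 1 (local model).} Fix a single model. Choose a topological pseudo-isotopy $\Phi_0$ of a compact piece, say $(S^2\times D^2)\# W_r$ or a plumbing neighbourhood, that is the identity near $\sqsubset$ and a diffeomorphism at time $1$, with $\KS(\Phi_0)$ the generator coming from the core $S^2$. This could be extracted from \cite{OPRW}. Alternatively, apply \cref{prop:key-property-of-CS} and \cref{prop:ronnielee}: a homeomorphism $h$ with nonzero Casson--Sullivan invariant, supported in a neighbourhood of a circle crossed with a stabilisation, is topologically isotopic to the identity after stabilising. We would then concatenate the isotopy $\Id\simeq h$ with a topological pseudo-isotopy from $h$ to a diffeomorphism. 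The $\KS$ of the result should be dual to $\cs(h)$ under the suspension $H^3\to H^4(\,\cdot\times I)$.

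\textbf{Step 2 (transplanting).} For each basis element $x_i$ of $H_2(X_1\# W_{k};\Z/2)$, represent it by an immersed (then, after stabilising, embedded, since stabilisation removes intersections by tubing into $S^2\times S^2$ summands) surface $S_i$. Implant the local model into a neighbourhood of $S_i$ connect-summed with new $W$ summands. Extend by the identity outside the neighbourhood. The gluing formula \cref{lem:properties-of-KS}\ref{item:lem-properties-KS-iii}, with $M$ the complement (where the pseudo-isotopy is the smooth identity) and $N$ the neighbourhood, gives $\KS(\Phi_i)=(i_N)_*\KS(\Phi_{0})=x_i$. We must also realise the new classes in the added $W$ summands. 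These are handled the same way, perhaps needing a few more summands, and this is why a single $m$ is chosen large enough to serve all finitely many basis classes at once. Here finiteness of $H_2(X_1;\Z/2)$, from compactness, is used.

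\textbf{Step 3 (main obstacle).} The hardest point is ensuring that a fixed $m$ works for \emph{every} class of $H_2(X_1\# W_m;\Z/2)$, including those in the summands added in the process. Stabilising to realise old classes creates new classes that must also be realised. I would resolve this by showing that the classes in each $S^2\times S^2$ summand are realised inside that summand plus one further summand, using a model on $W_2$. Then the classes of $W_m$ are handled by pairing summands, or by noting that realising classes of $X_1\# W_k$ inside $X_1\# W_{m}$ and then applying a diffeomorphism of $W_m$ that permutes summands shows that all of $H_2$ is covered. Once the realisation for a basis is in place, composition with the homomorphism property finishes the proof.
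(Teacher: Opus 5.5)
Your opening paragraph is already the paper's entire proof. The statement, including a single $m$ that serves every class of $H_2(X_1\# W_m;\Z/2)$, is obtained by applying \cite{OPRW}*{Theorem~E}, so no local model, transplanting, or uniformity argument is needed.

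If Steps~1--3 are meant to stand on their own, however, there is a genuine gap: the one concrete mechanism you offer for the local model fails.
\begin{itemize}
\item A homeomorphism $h$ with $\cs(h)\neq 0$ is never topologically isotopic rel.\ boundary to $\Id$, even after stabilising. If $h\#\Id_{W_j}$ were, then $h$ would be stably pseudo-isotopic to the diffeomorphism $\Id$, contradicting \cref{prop:key-property-of-CS}.
\item The same proposition says there is no (stable) topological pseudo-isotopy from $h$ to any diffeomorphism. So neither half of your concatenation exists.
\item The degrees are also wrong. The class $\cs(h)\in H^3(M,\partial M;\Z/2)\cong H_1(M;\Z/2)$ suspends once to $H^4(M\times I,\partial;\Z/2)$. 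By contrast, $\KS(\Phi)$ is dual to a class in $H^4(M\times I\times I,\partial;\Z/2)$, i.e.\ it lies in $H_2(M;\Z/2)$.
\end{itemize}
Lee's homeomorphism (\cref{prop:ronnielee}) is the tool for the one-parameter-lower obstruction killed in \cref{section:killing-CS}, not for realising $\KS$.

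The other option, ``extract the model from OPRW'', just reinstates the citation. Step~2 would also need more than you state. A surface representing a $\Z/2$-class can be taken embedded without any stabilisation, by resolving double points and allowing non-orientable surfaces. But its normal disc bundle has arbitrary genus, orientability and Euler number, so a single model on $(S^2\times D^2)\# W_r$ cannot be implanted. You would need realisations on all such stabilised disc bundles, which is essentially the theorem itself.

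If you want a self-contained argument, the missing idea is dimension-$5$ smoothing theory:
\begin{itemize}
\item Every $x\in H^3(U\times I,\partial;\Z/2)\cong H_2(U;\Z/2)$ is the difference class of a smoothing $\tau_x$ of $U\times I$ rel.\ boundary.
\item Then $(U\times I)_{\tau_x}$ is a smooth s-cobordism, which becomes a product after $\natural\,(W_r\times I)$ by Quinn's stable s-cobordism theorem.
\item That product structure, viewed as a self-homeomorphism of $(U\# W_r)\times I$ normalised to be the identity on $\sqsubset$, is a pseudo-isotopy from $\Id$ to a diffeomorphism with $\KS=x$.
\end{itemize}
Your Step~3 (composition, plus conjugating by diffeomorphisms permuting the $S^2\times S^2$ summands, after first arranging $k\geq 1$) then correctly upgrades these per-class realisations to a uniform $m$.
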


\begin{proof}
    This is an application of Orson--Powell--Randal-Williams~\cite{OPRW}*{Theorem~E}.
\end{proof}

In \cref{prop:realisation-of-OPRW-KS-obstruction} we obtain a topological pseudo-isotopy from the identity to some diffeomorphism, but we have no control on the diffeomorphism that arises on $X_1\# W_m \times \{1\}$.  In the next result, also proven in Orson--Powell--Randal-Williams, we obtain more control on this diffeomorphism, at the expense of requiring more assumptions.

\begin{theorem}\label{thm:inertial-realisation-result-OPRW}
Let $U$ be a compact, smooth, orientable 4-manifold with $\pi := \pi_1(U)$. 
Suppose that the map $I_2 \colon H_2(\pi;\Z_{(2)}) \to L_6(\Z\pi)_{(2)}$ is zero, and there is no 2-torsion in $H_1(\pi;\Z)$. Then for every $x \in H_2(U;\Z/2)$ there is an topological pseudo-isotopy $\Phi \colon U \times I \xratop U \times I$ with $\KS(\Phi) =x$ and $\Phi|_{\partial(U \times I)} = \Id_{U \times I}$, i.e.\ $\Phi$ is an inertial pseudo-isotopy. 
\end{theorem}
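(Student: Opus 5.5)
The plan is to construct $\Phi$ with topological surgery in dimension $\geq 5$, where no restriction on $\pi$ is needed for the surgery machinery itself. A topological pseudo-isotopy $\Phi$ of $U$ with $\Phi|_{\partial(U\times I)}=\Id$ is a self-homeomorphism of the $5$-manifold $U\times I$ rel.\ boundary. We aim to obtain $\Phi$ as an element of the relative topological structure set that is homotopic to the identity rel.\ boundary but has nontrivial normal invariant. I expect this is how \cite{OPRW} proves it.

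\textbf{Step 1: relate $\KS$ to a normal invariant.} Consider the surgery exact sequence for $(U\times I\times I)$ rel.\ boundary, or equivalently the sequence for $U\times I$ one dimension up,
\[
\cdots\to L_7(\Z\pi)\to \mathcal{S}^{\Top}_\partial(U\times I\times I)\to \mathcal{N}_\partial(U\times I\times I)=[\,U\times I\times I/\partial,\ G/\Top\,]\to L_6(\Z\pi).
\]
A pseudo-isotopy $\Phi$ that is the identity on $\partial(U\times I)$ gives a homeomorphism of $U\times I\times I$ rel.\ a neighbourhood of the boundary piece where the smooth structure is pulled back by $\Phi$. Its $\KS$ is the Kirby--Siebenmann class of the twisted boundary structure. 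This class should be the image of the normal invariant under the degree-$4$ class $\mathcal{N}\to H^4(U\times I\times I,\partial;\Z/2)\cong H_2(U;\Z/2)$. This uses the $2$-local splitting of $G/\Top$ in low degrees, whose first $k$-invariant is $\delta\mathrm{Sq}^2$, together with the formula relating $\ks$ to the degree-$4$ class.

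\textbf{Step 2: realise $x$ by a normal map with vanishing surgery obstruction.} Given $x\in H_2(U;\Z/2)$, take its Poincar\'e dual $\PD(x)\in H^4(U\times I\times I,\partial;\Z/2)$. We want a normal invariant $\eta\in\mathcal{N}_\partial$ whose degree-$4$ component reduces to $\PD(x)$. We also want $\eta$ to have trivial surgery obstruction in $L_6(\Z\pi)$, so that it lifts to the structure set. The surgery obstruction map factors through the assembly map. After localising at $2$, its relevant component on the degree-$4$ (that is, $H_2(\pi)$) part is exactly $I_2\colon H_2(\pi;\Z_{(2)})\to L_6(\Z\pi)_{(2)}$, which vanishes by hypothesis.

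The mod-$2$ reduction involves lifting $\PD(x)$ to an integral or $\Z_{(2)}$ class. The obstruction to that lift is a Bockstein, which lives in the $2$-torsion of $H_1$. That $2$-torsion sits in $H_1(\pi)$, because the map $U\to B\pi$ is $2$-connected, and it vanishes by the hypothesis that $H_1(\pi;\Z)$ has no $2$-torsion. This gives a structure, namely a homotopy equivalence rel.\ $\partial$ to $U\times I$, with the prescribed $\KS$.

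\textbf{Step 3: upgrade the structure to a self-homeomorphism homotopic to the identity rel.\ boundary.} The structure can be chosen in the image of $L_7$ acting on the base point, or directly as the identity homotopy equivalence twisted by $\eta$. Then the resulting manifold is $U\times I\times I$ itself, and the structure is represented by a self-homeomorphism rel.\ $\partial$. Restricting to the appropriate face and using the product structure then yields the required pseudo-isotopy $\Phi\colon U\times I\to U\times I$ with $\Phi|_{\partial(U\times I)}=\Id$ and $\KS(\Phi)=x$.

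\textbf{Main obstacle.} I expect the hardest part to be Step 2. It requires pinning down precisely that the surgery obstruction of the normal map with prescribed degree-$4$ class equals $I_2$ applied to a class determined by $x$. That needs careful $2$-local bookkeeping of the assembly map, which is exactly where both hypotheses enter. I would first try to quote the corresponding computation in \cite{OPRW} rather than redo it.
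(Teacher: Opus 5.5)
The paper gives no argument of its own here. It cites the surjectivity of $\pi_1(\pseudoHomeo^+_\partial(U))\to Q(U)\xrightarrow{\KS}H_2(U;\Z/2)$, established in the proof of \cite{OPRW}*{Theorem~C}; elements of $\pi_1(\pseudoHomeo^+_\partial(U))$ are represented by inertial pseudo-isotopies.

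Your outline reconstructs the kind of surgery argument that underlies such a statement, and it uses both hypotheses in the right places. The $2$-torsion of $H_1(U;\Z)=H_1(\pi;\Z)$ is exactly the obstruction to lifting $x$ integrally, and $I_2$ governs the surgery obstruction of the degree-$4$ part.

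However, Step~3, the step that actually produces $\Phi$, fails as written.
\begin{itemize}
\item A structure in the $L_7(\Z\pi)$-orbit of the base point has trivial normal invariant, by exactness. By your own Step~1 it therefore gives $\KS=0$.
\item A self-homeomorphism of $U\times I\times I$ rel.\ $\partial$ is the identity on every face, so restricting it to a face only yields $\Phi=\Id$.
\end{itemize}
What exactness really gives you is a simple homotopy equivalence $g\colon W\to U\times I\times I$ that restricts to a homeomorphism $\partial W\cong\partial(U\times I\times I)$, where $W$ is a priori unknown.

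The missing idea is the $s$-cobordism theorem. Regard $W$ as an $h$-cobordism from $g^{-1}(U\times I\times\{0\})$ to $g^{-1}(U\times I\times\{1\})$, relative to the sides $g^{-1}(\partial(U\times I)\times I)$. Its torsion vanishes, because $g$ is simple and is a homeomorphism on the $0$-face. The $6$-dimensional topological $s$-cobordism theorem then gives a homeomorphism $\Theta\colon U\times I\times I\to W$ with $g\circ\Theta=\Id$ on the $0$-face and on the sides. Consequently $g\circ\Theta$ is the identity on every face except $U\times I\times\{1\}$, where it is a homeomorphism $\Phi$ with $\Phi|_{\partial(U\times I)}=\Id$. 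Moreover $\KS(\Phi)$ is the relative Kirby--Siebenmann class of $W$, i.e.\ the image of the normal invariant $\eta$ under $G/\Top\to B(\Top/O)$. This is what makes your Step~1 applicable. Equivalently, it is the boundary map $\mathcal{S}_\partial(U\times D^2)\to\pi_1(\pseudoHomeo_\partial(U))$ of the block surgery fibration.

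There are also two smaller issues.

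First, $I_2=0$ only gives $\theta(\eta)=0$ in $L_6(\Z\pi)_{(2)}$. So $\theta(\eta)$ may be nonzero odd torsion, and $\eta$ need not yet lift to the structure set. This is easily fixed: $\theta$ is a homomorphism on $\mathcal{N}_\partial(U\times I\times I)=[(U\times I\times I)/\partial,G/\Top]$, so replace $\eta$ by $m\eta$ with $m$ odd and $m\theta(\eta)=0$. This does not change the mod~$2$ degree-$4$ class. You should also take the components of $\eta$ in $H^2(U\times I\times I,\partial;\Z/2)$ and $H^6(U\times I\times I,\partial;\Z/2)$ to be zero. Their assembly images are not controlled by the hypotheses; for instance, the top one hits $L_6(\Z)\cong\Z/2$, which injects into $L_6(\Z\pi)$.

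Second, $G/\Top$ is $2$-locally a product of Eilenberg--MacLane spaces; the $\delta\mathrm{Sq}^2$ $k$-invariant belongs to $G/\mathrm{PL}$. What Step~1 needs is that $G/\Top\to B(\Top/O)$ is reduction mod~$2$ on $\pi_4$. Cup products vanish on the double suspension $(U\times I\times I)/\partial$, so the relative $\ks$ class is then exactly the reduction of the degree-$4$ component.
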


\begin{proof}
   This follows directly from the proof of \cite{OPRW}*{Theorem~C}, where it is shown that the composition $\pi_1(\pseudoHomeo^+_{\partial}(U)) \to Q(U) \xrightarrow{\KS} H_2(U;\Z/2)$ is surjective; see the start of \cite{OPRW}*{Section~5}. 
\end{proof}

\cref{prop:realisation-of-OPRW-KS-obstruction} will be used for case \ref{thm:main-item-iii} of \cref{thm:main}, while \cref{thm:inertial-realisation-result-OPRW} will be needed for case~\ref{thm:main-item-iv}.

\subsection{Smoothing the topological pseudo-isotopy when the obstruction is supported on the surface exterior}

The proof of Theorem~\ref{thm:main} in case \ref{thm:main-item-iii} will make use of the following key result.

\begin{proposition}\label{prop:ks-comes-from-exterior}
    Let  $\widehat{G}\colon X\xrightarrow{\cong_{C^\infty}} X$ be a diffeomorphism sending $\Sigma_1$ to $\Sigma_2$ and write $G$ for the restriction of $\wh{G}$ to $X_1=\overline{\nu}\Sigma_1$.
    Let 
\[
\Psi \colon (X \# W_k) \times I \xrightarrow{\cong_{C^{0}}} (X \# W_k) \times I
\]
be a topological pseudo-isotopy from $\wh{G}$ to the identity map $\Id_{X\#W_k}$.
Assume that $\KS(\Psi)\in \im(H_2(X_1;\Z/2)\to H_2(X;\Z/2))$. 
Then there exists~$m\geq k$ and a diffeomorphism 
    \[
    L\colon X_1 \# W_m \xrightarrow{\cong_{C^{\infty}}} X_2\# W_m
    \]
    with $\wh{L} \colon X \# W_m \to X \# W_m$ smoothly pseudo-isotopic to the identity, via a smooth pseudo-isotopy 
    \[\Xi\colon (X\# W_m) \times I\xrightarrow{\cong_{C^{\infty}}} (X\# W_m)\times I\] with the property that $\Sigma(\Xi)\in \im(\Wh_2(\pi_1(X_1))\to \Wh_2(\pi))$.
\end{proposition}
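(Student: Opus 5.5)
Write $x:=\KS(\Psi)\in H_2(X\# W_k;\Z/2)$ and choose $y\in H_2(X_1\# W_k;\Z/2)$ mapping to $x$ under inclusion; such a $y$ exists by hypothesis, since $H_2(W_k)$ lies in the image as well. The plan is to cancel $x$ by composing $\Psi$ with a second topological pseudo-isotopy that is supported on the surface exterior and has $\KS$ equal to the image of $y$. First we stabilise: by \cref{prop:realisation-of-OPRW-KS-obstruction} there is an $m\geq k$ and a topological pseudo-isotopy $\Phi$ of $X_1\# W_m$ with $\Phi_0=\Id$, with $\Phi_1=:\phi$ a diffeomorphism, with $\Phi|_{\sqsubset}=\Id$ (so $\Phi$ is the identity near $\partial(X_1\# W_m)\times I$, including over $\partial_1X_1$), and with $\KS(\Phi)=y$, where $y$ is pushed forward into $X_1\# W_m$. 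We replace $\Psi$ by its stabilisation $\Psi\#\Id_{W_{m-k}}$. Because $\KS$ is natural under adding a product region on which the structure is standard, this does not change $x$ (now pushed into $H_2(X\# W_m)$); this naturality is a special case of \cref{lem:properties-of-KS}~\ref{item:lem-properties-KS-iii}.

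Next we fill in. Extend $\Phi$ by the identity over $\ol{\nu}\Sigma_1\times I$ to get a topological pseudo-isotopy $\wh\Phi$ of $X\# W_m$ from $\Id$ to $\wh\phi:=\phi\cup\Id$. By \cref{lem:properties-of-KS}~\ref{item:lem-properties-KS-iii}, with $N=X_1\# W_m$ and $M=\ol{\nu}\Sigma_1$ (on which the pseudo-isotopy is the smooth identity), we get $\KS(\wh\Phi)=(i_N)_*y=x$. Now set $L:=(G\#\Id_{W_m})\circ\phi$, so that $\wh L=(\wh G\#\Id)\circ\wh\phi$ sends $\Sigma_1$ to $\Sigma_2$. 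Form the composite pseudo-isotopy: first run $\wh\Phi$ from $\Id$ to $\wh\phi$, then post-compose $\wh\phi$ with the reversed stabilised $\Psi$, which runs from $\Id$ to $\wh G\#\Id$. This gives a topological pseudo-isotopy $\Theta$ from $\Id$ to $\wh L$, and hence, reversing it, one from $\wh L$ to $\Id$. We work in the group $Q(X\# W_m)$, arranging representatives in the form required there; this may need the $\KS$-homomorphism in a version that allows the diffeomorphism end to vary, and a small reparametrisation so that both ends are smooth. Using \cref{lem:properties-of-KS}~\ref{item:lem-properties-KS-i}, together with the fact that $\KS$ changes sign (trivially, mod $2$) under inversion and is invariant under conjugation or post-composition by a diffeomorphism, we get $\KS(\Theta)=x+x=0$. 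Then \cref{lem:properties-of-KS}~\ref{item:lem-properties-KS-ii} gives a topological isotopy rel.\ $\partial$ from $\Theta$ to a smooth pseudo-isotopy $\Xi$ between $\wh L$ and $\Id$.

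Finally we locate $\Sigma(\Xi)$. The topological Hatcher--Wagoner invariant is unchanged under topological isotopy rel.\ boundary, and for smooth pseudo-isotopies it agrees with the smooth one, so $\Sigma(\Xi)=\Sigma^{\Top}(\Theta)$. This also requires $\Sigma^{\Top}$ to be additive under composition, which we take from \cite{Galvin:2025aa}. Then $\Sigma^{\Top}(\Theta)=\Sigma^{\Top}(\wh\Phi)\pm\Sigma^{\Top}(\Psi)$. The first term lies in the image of $\Wh_2(\pi_1(X_1))$ by naturality \cite{Galvin:2025aa}*{Proposition 1.3}, because $\wh\Phi$ is filled in from $X_1\# W_m$. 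For the second term, \cref{lem:sigma-comes-from-exterior} only applies to the particular $\Psi$ it constructs, so here we must either assume $\Psi$ comes from \cref{lem:wh-G-top-PI-to-Id} or add that hypothesis. I expect this to be the main obstacle, together with the bookkeeping that makes the composition legitimate in $Q$, that is, checking that $\KS$ is additive and sign-insensitive when the smooth endpoints differ.
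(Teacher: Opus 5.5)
Your proposal is correct and follows the paper's proof essentially step for step. You lift $\KS(\Psi)$ to the exterior, realise the lift by \cref{prop:realisation-of-OPRW-KS-obstruction}, fill in by the identity over $\ol{\nu}\Sigma_1\times I$, compose with $\Psi$ to get $\KS=x+x=0$ and smooth via \cref{lem:properties-of-KS}, and then locate $\Sigma(\Xi)$ by additivity and naturality of $\Sigma^{\Top}$. The missing hypothesis you flag is genuine, and the paper's own proof has the same gap: it cites \cref{lem:sigma-comes-from-exterior} for $\Sigma^{\Top}(\Psi)$, so it implicitly assumes $\Psi$ comes from \cref{lem:wh-G-top-PI-to-Id}, or equivalently that $\Sigma^{\Top}(\Psi)\in\im(\Wh_2(\pi_1(X_1))\to\Wh_2(\pi))$, exactly as stated in the hypotheses of \cref{prop:aim-of-smoothing-PI-section}.
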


\begin{proof}
    Write $\KS(\Psi)=x\in H_2(X;\Z/2)$.  By assumption, we may choose a lift $\ol{x}\in H_2(X_1;\Z/2)$ of~$x$.
    Now apply \cref{prop:realisation-of-OPRW-KS-obstruction} to $\ol{x}$, to obtain, for some $m \geq k$, a topological pseudo-isotopy 
    \[
    \Phi\colon (X_1\# W_m)\times I\xrightarrow{\cong_{C^{0}}} (X_1\# W_m)\times I
    \]
    with $\KS(\Phi)=\ol{x} \in H_2(X_1;\Z/2)$, that restricts on $(X_1\# W_m)\times \{1\}$ to a diffeomorphism. Extend~$\Phi$ to the whole of $X$ using the trivial pseudo-isotopy on $\ol{\nu}\Sigma_1 \times I$, to obtain 
    \[
    \wh{\Phi} \colon (X\#W_m) \times I \xrightarrow{\cong_{C^{0}}} (X\#W_m) \times I.
    \] 
We therefore have that 
\[x = i_*(\ol{x}) = i_*(\KS(\Phi)) = \KS(\wh{\Phi}),\]
where $i \colon X_1 \to X$ is the inclusion, and the last equality follows from \cref{lem:properties-of-KS}~\ref{item:lem-properties-KS-iii}.  
    
    Compose $\wh{\Phi}$ with $\Psi$ (stabilised by extending by the identity on $W_{m-k} \times I$, but still denoted by $\Psi$) to form a topological pseudo-isotopy
    \[\Psi \circ \wh{\Phi} \colon (X\# W_m) \times I\xrightarrow{\cong_{C^{\infty}}} (X\# W_m)\times I.\]
The restriction of this to $(X_1\# W_m) \times \{1\}$ yields $L := (G \# \Id_{W_{m-k}}) \circ \Phi_1 \colon X_1 \# W_m \xrightarrow{\cong_{C^{\infty}}} X_2\# W_m$, and 
the restriction to $(X\# W_m) \times \{1\}$ yields 
\[\wh{L} := (\wh{G} \# \Id_{W_{m-k}}) \circ \Phi_1 \colon X \# W_m \xrightarrow{\cong_{C^{\infty}}} X\# W_m.\]
    
    Since  $\KS$ is a homomorphism \cref{lem:properties-of-KS}~\ref{item:lem-properties-KS-i}, we have $\KS(\Psi \circ \wh{\Phi}) = \KS(\Psi) + \KS(\wh{\Phi})=x+x=0$. Hence by \cref{lem:properties-of-KS}~\ref{item:lem-properties-KS-ii}, we have that $\Psi \circ \wh{\Phi}$ is topologically isotopic rel.\ boundary to a smooth pseudo-isotopy, as desired, which we denote by $\Xi$.

It remains to prove the statement regarding $\Sigma(\Xi)$.  We calculate
    \[
    \Sigma(\Xi)=\Sigma^{\Top}(\Xi)=\Sigma^{\Top}(\Psi\circ \widehat{\Phi})=\Sigma^{\Top}(\Psi)+\Sigma^{\Top}(\widehat{\Phi})\in\im(\Wh_2(\pi_1(X_1))\to \Wh_2(\pi)),
    \]
    where the first equality follows from \cite{Galvin:2025aa}*{Theorem 1.1}, the second from the definition of~$\Xi$, and the third from the fact that $\Sigma^{\Top}$ is a homomorphism~\cite{Galvin:2025aa}*{Lemma 3.12}.  That this sum is an element of the noted subgroup can be deduced as follows.  The first summand is by \cref{lem:sigma-comes-from-exterior} and the second summand is since $\wh{\Phi}$ was only supported on the exterior $(X_1\# W_m)\times I$, and since~$\Sigma^{\Top}$ is natural under inclusions of codimension zero submanifolds~\cite{Galvin:2025aa}*{Proposition 1.3}.
\end{proof}

\subsection{Smoothing the pseudo-isotopy assuming at least one of the hypotheses of \texorpdfstring{\cref{thm:main}}{Theorem A} hold}

Next we prove that in both sets of assumptions of \cref{thm:main}, we can improve~$\Psi$, after further stabilising, to a smooth pseudo-isotopy, while preserving its other useful properties. 
The aim is to prove the following result. The output is similar to \cref{prop:ks-comes-from-exterior}, but instead of assuming that $\KS(\Psi)\in \im(H_2(X_1;\Z/2)\to H_2(X;\Z/2))$, we assume one of the hypotheses of \cref{thm:main}.

\begin{proposition}\label{prop:aim-of-smoothing-PI-section}
Suppose that for some $k \geq 0$ there exists a diffeomorphism $\wh{G} \colon X \# W_k \xrasmooth X \# W_k$ that sends $\Sigma_1$ to $\Sigma_2$ and that is topologically pseudo-isotopic to the identity, via a topological pseudo-isotopy $\Psi$ such that $\Sigma^{\Top}(\Psi)\in \im(\Wh_2(\pi_1(X_1))\to \Wh_2(\pi))$.  
Suppose that at least one of the conditions 
\ref{thm:main-item-iii} or \ref{thm:main-item-iv} in \cref{thm:main} is satisfied. 

Then there exists~$m\geq k$ and a diffeomorphism $L\colon X_1 \# W_m \xrightarrow{\cong_{C^{\infty}}} X_2\# W_m$, which extends to a diffeomorphism \[\wh{L} \colon X \# W_m \xrasmooth X \# W_m\] that sends $\Sigma_1$ to $\Sigma_2$, and is smoothly pseudo-isotopic to the identity, via a smooth pseudo-isotopy 
    \[\Xi\colon (X\# W_m) \times I\xrightarrow{\cong_{C^{\infty}}} (X\# W_m)\times I\] such that $\Sigma(\Xi)\in \im(\Wh_2(\pi_1(X_1))\to \Wh_2(\pi))$.
\end{proposition}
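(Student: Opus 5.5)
We split according to which hypothesis of \cref{thm:main} holds, writing $x:=\KS(\Psi)\in H_2(X\# W_k;\Z/2)$.

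\emph{Case \ref{thm:main-item-iii}.} Here we reduce to \cref{prop:ks-comes-from-exterior}, so we must show $x\in\im(H_2(X_1;\Z/2)\to H_2(X;\Z/2))$; the $W_k$ summands lie in the exterior, so it suffices to work in $X$. Consider the exact sequence of the pair $(X,X_1)$. By excision and Thom/Lefschetz duality,
\[
H_2(X,X_1;\Z/2)\cong H_2(\ol{\nu}\Sigma_1,\partial_1\ol{\nu}\Sigma_1;\Z/2)\cong H^0(\Sigma_1;\Z/2)\cong\bigoplus_j\Z/2.
\]
Under this identification, the map $H_2(X;\Z/2)\to H_2(X,X_1;\Z/2)$ sends $y$ to the tuple $\big(\lambda^{\Z/2}(y,[\Sigma_1^j])\big)_j$, the mod $2$ intersection numbers with the components $\Sigma_1^j$, viewed in $H_2(X,\partial X;\Z/2)$. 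This uses the standard identification of the Thom class with counting intersections with the zero section; checking it is the one genuinely geometric point. Hypothesis \ref{thm:main-item-iii} says all these numbers vanish for every $y$, so $H_2(X_1;\Z/2)\to H_2(X;\Z/2)$ is surjective. In particular $x$ lifts, and \cref{prop:ks-comes-from-exterior} applies with $\wh{G}$ and $\Psi$, giving $m$, $L$, $\wh{L}$ and $\Xi$ with $\Sigma(\Xi)$ in the required image. The proof of that proposition only uses the assumption on $\Sigma^{\Top}(\Psi)$ through \cref{lem:sigma-comes-from-exterior}, so we should quote it with our hypothesis $\Sigma^{\Top}(\Psi)\in\im(\Wh_2(\pi_1(X_1))\to\Wh_2(\pi))$ in its place.

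\emph{Case \ref{thm:main-item-iv}.} Here we cannot lift $x$ to the exterior in general. Instead we apply \cref{thm:inertial-realisation-result-OPRW} with $U=X\# W_k$, which has fundamental group $\pi$. This gives an inertial topological pseudo-isotopy $\Phi$ with $\KS(\Phi)=x$ and $\Phi|_{\partial(U\times I)}=\Id$. Set $\Psi':=\Psi\circ\Phi$; it is again a pseudo-isotopy from $\wh{G}$ to the identity, since $\Phi$ is the identity on both ends. By \cref{lem:properties-of-KS}~\ref{item:lem-properties-KS-i} we get $\KS(\Psi')=x+x=0$. Then \cref{lem:properties-of-KS}~\ref{item:lem-properties-KS-ii} shows $\Psi'$ is topologically isotopic rel.\ boundary to a smooth pseudo-isotopy $\Xi$, and we take $m=k$, $L=G$, $\wh{L}=\wh{G}$. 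For the Whitehead obstruction,
\[
\Sigma(\Xi)=\Sigma^{\Top}(\Psi)+\Sigma^{\Top}(\Phi),
\]
and since $\Wh_2(\pi)=0$ the image condition holds trivially.

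\emph{Main obstacle.} The main difficulty is the identification in Case \ref{thm:main-item-iii} of $H_2(X)\to H_2(X,X_1)$ with intersection numbers against the components. Care is needed with closed versus proper components and with nonorientable $\Sigma_1^j$, though mod $2$ coefficients should make the Thom isomorphism valid regardless of orientability. A secondary check is that the order of composition $\Psi\circ\Phi$ is compatible with the group structure on $Q(U)$, so that the homomorphism property applies.
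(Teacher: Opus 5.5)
Your proposal is correct and follows essentially the same route as the paper. In case \ref{thm:main-item-iii} you use the exact sequence of the pair $(X,X_1)$, with $H_2(X,X_1;\Z/2)\cong(\Z/2)^c$ detected by mod~$2$ intersection numbers with the components $\Sigma_1^j$, to get surjectivity onto $H_2(X;\Z/2)$ and then invoke \cref{prop:ks-comes-from-exterior}; in case \ref{thm:main-item-iv} you cancel $\KS(\Psi)$ with an inertial pseudo-isotopy from \cref{thm:inertial-realisation-result-OPRW} and use $\Wh_2(\pi)=0$. You are right that \cref{prop:ks-comes-from-exterior} implicitly needs the hypothesis on $\Sigma^{\Top}(\Psi)$ through \cref{lem:sigma-comes-from-exterior}, and composing as $\Psi\circ\Phi$ rather than the paper's $\Phi\circ\Psi$ makes no difference.
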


The proof of \cref{prop:aim-of-smoothing-PI-section} in case~\ref{thm:main-item-iii} reduces to showing that the assumption of \cref{prop:ks-comes-from-exterior} can be arranged to hold, and then applying \cref{prop:ks-comes-from-exterior}.  The proof for case \ref{thm:main-item-iv} uses \cref{thm:inertial-realisation-result-OPRW} instead.  

\subsubsection{Proof of \cref{prop:aim-of-smoothing-PI-section} in case \ref{thm:main-item-iii}}

In this scenario we assume that for each connected component $\Sigma_1^j$ of $\Sigma_1$, we have that ~$\lambda^{\Z/2}(x,[\Sigma_1^j]) = 0 \in \Z/2$ for all $x\in H_2(X;\Z/2)$.

We consider the inclusion $i \colon X_1 \to X$.  
We want to show that the hypothesis of \cref{prop:ks-comes-from-exterior}, that \[i_* \colon \KS(\Psi)\in \im(i_* \colon H_2(X_1;\Z/2)\to H_2(X;\Z/2)),\]  
is satisfied. Then \cref{prop:ks-comes-from-exterior} directly implies \cref{prop:aim-of-smoothing-PI-section}. 

To show this, we first compute that 
\begin{align*}
H_2(X,X_1;\Z/2) &\xrightarrow{\cong} H_2(\ol{\nu} \Sigma_1,\partial_1 \ol{\nu}\Sigma_1;\Z/2) \cong H^2(\ol{\nu}\Sigma_1, \partial_0 \ol{\nu}\Sigma_1;\Z/2) \cong H^2(\Sigma_1,\partial \Sigma_1;\Z/2) \\ &\cong H_0(\Sigma_1;\Z/2) \cong (\Z/2)^c,
    \end{align*}
where $c$ is the number of connected components $\Sigma_1^j$ of $\Sigma_1$.
Consider the exact sequence of the pair:
\[H_2(X_1;\Z/2) \xrightarrow{i_*} H_2(X;\Z/2) \to H_2(X,X_1;\Z/2) \cong (\Z/2)^c.\]
The latter group is generated by meridional discs to the connected components of $\Sigma_1$. Then $\coker(i_*) \subseteq (\Z/2)^c$ is generated by a collection of homology classes $\{S_\ell\} \subseteq H_2(X;\Z/2)$ such that for each $S_{\ell}$ the $\Z/2$-intersection pairing 
\[\lambda^{\Z/2} \colon H_2(X;\Z/2) \times H_2(X,\partial X;\Z/2)\to\Z/2\] satisfies that $\lambda^{\Z/2}(S_{\ell},[\Sigma_1^j]) = 1 \in \Z/2$ for some $j$.  If $\lambda^{\Z/2}(x,[\Sigma_1^j]) = 0 \in \Z/2$ for all $x\in H_2(X;\Z/2)$, then there is no such collection~$\{S_{\ell}\}$.  It follows that $\coker(i_*)=0$, so $i_*$ is surjective, and hence $\KS(\Psi)\in \im(i_* \colon H_2(X_1;\Z/2)\to H_2(X;\Z/2))$ as desired.  \qed

\subsubsection{Proof of \cref{prop:aim-of-smoothing-PI-section} in case \ref{thm:main-item-iv}}

For  case \ref{thm:main-item-iv}, we assume that the component  \[I_2 \colon H_2(\pi;\Z_{(2)}) \to L_6(\Z\pi)_{(2)}\] of the algebraic assembly map  is zero, there is no 2-torsion in $H_1(\pi;\Z)$, and that $\Wh_2(\pi)=0$. 

Under the first two of these hypotheses, by \cref{thm:inertial-realisation-result-OPRW}, there exists a topological pseudo-isotopy from the identity to itself 
\[\Phi\colon (X\# W_k)\times I\xratop (X\# W_k)\times I,\]  such that $\KS(\Psi)=\KS(\Phi)$. Then since $\KS$ is a homomorphism by \cref{lem:properties-of-KS}~\ref{item:lem-properties-KS-i}, we have that $\KS(\Phi\circ\Psi)=0$ so that $\Phi\circ\Psi$ is topologically isotopic rel.~boundary to a smooth pseudo-isotopy $\Xi$ from $\Xi|_{(X\# W_k)\times\{1\}}=\Psi|_{(X\# W_k)\times\{1\}}$ to $\Id_{X\# W_k}$. Note that $\wh{L}:= \Xi|_{(X\# W_k)\times\{1\}}$ sends~$\Sigma_1$ to $\Sigma_2$.

Also since $\Wh_2(\pi)=0$, it is automatic  that $\Sigma(\Xi) \in \im(\Wh_2(\pi_1(X_1)) \to \Wh_2(\pi)=0)$. 
Hence the smooth pseudo-isotopy $\Xi$ satisfies the conclusion of \cref{prop:aim-of-smoothing-PI-section}. \qed

\section{Completing the proof of \texorpdfstring{\cref{thm:main}}{Theorem A}}\label{section:completing-the-proof}

The following results of Singh and Gabai, respectively, will be essentially used in the conclusion of the proof.

\begin{theorem}[{\cite{Singh}*{Theorem~E}}]
\label{thm:singh}
    Let $M$ be a smooth, compact $4$-manifold and $x\in\Wh_2(\pi_1(M))$. Then there exists $N\in\N$ and a smooth pseudo-isotopy
    \[
    \Lambda\colon (M\#W_N)\times I\to (M\#W_N)\times I,
    \]
    restricting to the identity on $(M\#W_N)\times\{0\}$ and $(\partial M\#W_N)\times I$,
    such that
    \[
    \Sigma(\Lambda)=x\in\Wh_2(\pi_1(M\#W_N))=\Wh_2(\pi_1(M)).
    \]
\end{theorem}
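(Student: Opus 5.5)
The plan is to adapt the Hatcher--Wagoner realisation argument for $\Wh_2$ to dimension $4$, with stabilisation by copies of $S^2\times S^2$ standing in for the missing Whitney trick.

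\textbf{Algebraic input.} Write $\pi=\pi_1(M)$ and recall that $\Wh_2(\pi)$ is a quotient of $K_2(\Z\pi)$. Lift $x$ to $K_2(\Z\pi)$ and represent the lift by a word $w=\prod_s x_{i_sj_s}(\pm g_s)$ in the Steinberg group $\mathrm{St}(\Z\pi)$. Its image in $\mathrm{GL}_n(\Z\pi)$ is the identity matrix.

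\textbf{Constructing the family.} I would build a one-parameter family of functions $f_t\colon M\times I\to I$, $t\in I$, such that $f_0$ and $f_1$ are the projection, all critical points have index $2$ and $3$, and the Cerf graphic reads off $w$. The family has three stages.
\begin{enumerate}[label=(\arabic*)]
\item \emph{Births.} Introduce $n$ independent cancelling pairs of $2$- and $3$-handles. The middle level is then diffeomorphic to $M\#W_n$. In it, the attaching spheres $A_i$ of the $3$-handles and the belt spheres $B_j$ of the $2$-handles are embedded $2$-spheres meeting geometrically in the Kronecker pattern $A_i\pitchfork B_j=\delta_{ij}$.
\item \emph{Handle slides.} For each letter $x_{ij}(\pm g)$ of $w$, slide the $3$-handle $i$ over the $3$-handle $j$ along a path representing $g$, with the appropriate orientation. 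At the level of $\Z\pi$-intersection matrices $\lambda(A_i,B_j)$, this multiplies by the corresponding elementary matrix.
\item \emph{Deaths.} Since $w$ maps to the identity in $\mathrm{GL}_n(\Z\pi)$, the final intersection matrix is the identity. If the intersections are in fact geometrically the identity, the pairs can be cancelled in the reverse order.
\end{enumerate}
The resulting pseudo-isotopy $\Lambda$ of $M\times I$ is the identity on $M\times\{0\}$ and on $\partial M\times I$. By the definition of the Hatcher--Wagoner invariant in terms of the Cerf graphic, $\Sigma(\Lambda)$ is the class of $w$, namely $x$.

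\textbf{Main obstacle: geometric cancellation.} The key difficulty is the final cancellation step. In the middle level $M\#W_n$, which is $4$-dimensional, algebraic intersection number $\delta_{ij}$ does not imply geometric intersection $\delta_{ij}$, because the Whitney trick fails. I would handle this by stabilising further.
\begin{itemize}
\item Pair the excess intersection points of $A_i\cap B_j$ with Whitney discs; these exist because the algebraic intersections cancel in $\Z\pi$.
\item After connected sum with extra copies of $S^2\times S^2$, tube the intersections of the Whitney discs with the $A_i$ and $B_j$, and their framing defects, into the new $S^2$ factors. This uses the standard stable techniques of Quinn, Freedman--Quinn and Gabai (Norman tricks into geometric duals) to produce embedded, framed Whitney discs with disjoint interiors.
\item Perform the resulting Whitney moves as isotopies within the $1$-parameter family.
\end{itemize}
Since these moves are isotopies, they do not change the Cerf-graphic data. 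The extra copies of $S^2\times S^2$ are added as a product stabilisation of the whole family, so the construction gives a pseudo-isotopy of $(M\#W_N)\times I$ for some $N$, still the identity on $(M\#W_N)\times\{0\}$ and on $(\partial M\#W_N)\times I$.

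\textbf{Remaining checks.} Finally I would verify the following.
\begin{itemize}
\item $\Sigma$ is natural under stabilisation, using $\pi_1(M\#W_N)=\pi_1(M)$, so the class is unchanged.
\item The relations $\{\pm g,\pm g\}$ by which one quotients $K_2$ to obtain $\Wh_2$ are irrelevant for this existence statement, so any lift of $x$ suffices.
\end{itemize}
I expect essentially all of the real work to lie in making the stabilised Whitney moves compatible with the ordering of slides in the family.
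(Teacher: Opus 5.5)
The paper does not prove this statement; it only cites it from Singh. Your outline is the natural Hatcher--Wagoner realisation argument: births of trivial $2$/$3$-pairs, $3$-handle slides spelling a Steinberg word $w$ with $\bar w=I$, geometric cancellation in a stabilised middle level, then deaths. However, the step you identify as the crux is carried out incorrectly.

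\textbf{The stabilisation step.} The product stabilisation puts each new $S^2\times S^2$ summand in every level, away from all handles. So its spheres meet no $A_i$ or $B_j$. They are not geometric duals to the $A_i$ or $B_j$, and tubing a Whitney disc into them removes none of the disc's intersections with $A\cup B$. Nor may you tube the $A_i$ or $B_j$ themselves, since they may only move by isotopy. The roles have to be reversed, as follows.
\begin{enumerate}
\item Without any stabilisation, push each interior intersection of a Whitney disc with some $A_k$ (resp.\ $B_\ell$) off across the disc's $B$-arc (resp.\ $A$-arc), by a finger move of $A_k$ (resp.\ $B_\ell$). This creates new cancelling $A$--$B$ pairs with small framed Whitney discs.
\item Fix framings by boundary twisting, followed by the same push-off.
\item You are now left with framed Whitney discs $W_1,\dots,W_r$ whose interiors miss $A\cup B$, but which meet one another and themselves. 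Only at this point is stabilisation used. Tube $W_k$ into $S^2\times\pt$ of the $k$-th new summand. Then $\pt\times S^2$ there is a framed geometric dual to $W_k$, disjoint from everything else.
\item Remove all double points among the $W_k$ by Norman tricks into parallel copies of these duals. This creates no intersections with $A\cup B$ and does not change framings.
\end{enumerate}
The Whitney moves are then isotopies of the $A_i$ inside a single level, performed after all slides and before the deaths. So the compatibility with the order of slides, which you expect to be the main difficulty, does not arise.

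\textbf{The endpoint of the family.} Unless $x=0$, the family cannot end at the projection $\mathrm{pr}\colon M\times I\to I$. The space of all functions with the standard boundary behaviour is contractible. Hence the class of a path starting at $\mathrm{pr}$ is determined by the component of its endpoint in the space of critical-point-free functions. A path from $\mathrm{pr}$ to $\mathrm{pr}$ therefore represents the identity pseudo-isotopy, which has $\Sigma=0$.

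Instead, after the deaths you arrive at some critical-point-free $f_1$. Then $\Lambda$ is the pseudo-isotopy with $\mathrm{pr}\circ\Lambda=f_1$, obtained by integrating a gradient-like vector field that is standard near $(M\times\{0\})\cup(\partial M\times I)$. This is what gives the required boundary conditions. The restriction $\Lambda|_{M\times\{1\}}$ is in general not the identity.

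With these two corrections your construction gives $\Sigma(\Lambda)=[w]=x$. This takes as given, as the paper does, that $\Sigma$ is well defined for $4$-dimensional pseudo-isotopies and is computed from a $2$/$3$-handle Cerf graphic.
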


\begin{theorem}[\cite{Gabai-22}*{Theorem~2.5}]\label{thm:PI-iff-stable-smooth-isotopy}
    Let $f\colon M\xrightarrow{\cong_{C^\infty}} M$ be an orientation-preserving diffeomorphism of a smooth, compact, oriented $4$-manifold $M$. Then $f$ is smoothly stably isotopic rel.~boundary to $\Id_M$ if and only if $f$ is smoothly pseudo-isotopic to $\Id_M$ via a smooth pseudo-isotopy $\Xi$ with vanishing Hatcher--Wagoner obstruction $\Sigma(\Xi)=0\in \mathrm{Wh}_2(\pi_1(M))$.
\end{theorem}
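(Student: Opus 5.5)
This statement is \cite{Gabai-22}*{Theorem~2.5} and is used as a black box, so the plan is to reconstruct it from pseudo-isotopy theory in the style of Cerf, Hatcher--Wagoner and Quinn, rather than from anything earlier in this paper. The backward direction (vanishing obstruction implies stable isotopy) carries the content. The forward direction is mostly formal, except for one descent step that I have not resolved. Throughout I use that $\Sigma$ is a homomorphism on smooth pseudo-isotopies, is natural under codimension-zero inclusions (and hence under stabilisation, since $\pi_1(M\# W_k)=\pi_1(M)$), and vanishes on isotopies.

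\emph{Backward direction.} Let $\Xi\colon M\times I\to M\times I$ be a smooth pseudo-isotopy from $f$ to $\Id_M$ with $\Sigma(\Xi)=0$.
\begin{enumerate}
\item Compose the projection $M\times I\to I$ with $\Xi$. This gives a one-parameter family of functions $\{h_s\}_{s\in I}$ on $M\times I$, starting and ending at nonsingular functions.
\item Put the family in Cerf general position. By the Hatcher--Wagoner reduction, deform it so that it consists only of index $2$/$3$ handle pairs, born at the start and dying at the end.
\item Read off the middle-level data: a family of $2$-spheres and $3$-dimensional belt spheres in a $4$-dimensional middle level, whose intersections along the family determine a matrix over $\Z\pi$. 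By definition $\Sigma(\Xi)$ is the class of this matrix in $\Wh_2(\pi)$ (modulo the $\Wh_1$ part, which vanishes for a pseudo-isotopy from a diffeomorphism to the identity).
\item Use $\Sigma(\Xi)=0$ to change the family by handle slides, after which the intersection data is algebraically the identity matrix in $\mathrm{St}(\Z\pi)$.
\end{enumerate}
The geometric step is then to realise this algebraic cancellation, so that the $2$-handles and $3$-handles can be paired off and cancelled for all times simultaneously. This gives a family with no critical points, and integrating the gradient-like flow turns $\Xi$ into an isotopy from $f$ to $\Id_M$.

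The obstruction to that geometric cancellation is the failure of the Whitney trick in the $4$-dimensional middle level. Following Quinn, I would stabilise by $W_k$ for $k$ large: the extra $S^2\times S^2$ summands provide the framed immersed spheres needed to remove excess intersections. This is exactly why the conclusion is only a \emph{stable} isotopy. The output is an isotopy rel.\ boundary from $f\#\Id_{W_k}$ to $\Id_{M\# W_k}$. This step, Quinn's stable Whitney trick carried out in one-parameter families, is the main technical obstacle; I would follow Quinn's argument with Gabai's corrections rather than redo it.

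\emph{Forward direction.} Suppose $f\#\Id_{W_k}$ is smoothly isotopic rel.\ boundary to $\Id_{M\#W_k}$. Regarding this isotopy as a pseudo-isotopy $\Xi_k$ gives $\Sigma(\Xi_k)=0$. What remains is to descend from $M\#W_k$ to a pseudo-isotopy $\Xi$ of $M$ itself with $\Sigma(\Xi)=0$. I do not have a complete argument for this descent, and it is the second point I expect to be delicate. I would try to use Quinn's result that stable pseudo-isotopy implies pseudo-isotopy in dimension $4$, applied to get some pseudo-isotopy $\Xi$ of $M$ from $f$ to $\Id_M$. If $\Sigma(\Xi)$ is nonzero, I would then correct it using Singh's realisation \cref{thm:singh} together with additivity of $\Sigma$, and use naturality of $\Sigma$ under stabilisation to control the correction.
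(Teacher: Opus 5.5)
The paper gives no proof of this statement. It quotes it from Gabai and only ever uses the backward implication, from a pseudo-isotopy with $\Sigma(\Xi)=0$ to stable isotopy, in the proof of \cref{lemma:hyp-of-smoothing-PI-key-prop-implies-ESSI}. Your sketch of that implication follows the Cerf/Hatcher--Wagoner/Quinn route and delegates the real content to Quinn, as corrected by Gabai--Gay--Hartman--Krushkal--Powell. That is effectively what the citation does. A few details need fixing:
\begin{enumerate}
\item The one-parameter family should be a generic path of functions from the projection $p$ to $p\circ\Xi$. It is not produced by the composition itself.
\item In the $4$-dimensional middle level of the $5$-manifold $M\times I$, the attaching spheres of the $3$-handles and the belt spheres of the $2$-handles are both $2$-spheres, not $3$-dimensional. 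The $2+2=4$ situation is exactly why the Whitney trick fails.
\item The $\Wh_1$ part vanishes because $M\times I$ is a product cobordism, whatever the end maps are.
\item Once $\Sigma=0$ has made the intersection matrices algebraically trivial, Hatcher--Wagoner theory still has a secondary obstruction in $\Wh_1^+(\pi;\Z/2\times\pi_2)$. It is carried by the geometric intersections of the two families of spheres. Disposing of it is part of what you are delegating, so Step~4 does not finish the algebra.
\end{enumerate}

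The genuine gap is the forward implication, as you acknowledge, and your proposed repair does not close it. The stable isotopy gives a level-preserving pseudo-isotopy of $M\# W_k$ with $\Sigma=0$. Suppose some descent result produced a pseudo-isotopy $\Xi$ of $M$ between $f$ and $\Id_M$. Naturality then gives only $\Sigma(\Xi)=\Sigma(\Xi\#\Id_{W_k})$. But $\Xi\#\Id_{W_k}$ and the given isotopy differ by a pseudo-isotopy of $M\#W_k$ from the identity to the identity, and nothing forces $\Sigma$ of that difference to vanish. This dependence on the choice of $\Xi$ is why the statement asks for the existence of some $\Xi$ with $\Sigma(\Xi)=0$.

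Nor can \cref{thm:singh} correct $\Sigma(\Xi)$, for two reasons. First, its $\Lambda$ lives on $M\# W_N$, so composing with it puts you back on a stabilisation and the descent problem recurs. Second, $\Lambda$ is the identity only on the bottom and the sides, and its top $\Lambda_1$ is uncontrolled, so the composite no longer has ends $f$ and $\Id$. In \cref{lemma:hyp-of-smoothing-PI-key-prop-implies-ESSI} both defects are harmless: the conclusion there is stable, and the end map only has to send $\Sigma_1$ to $\Sigma_2$.

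To prove the statement as written, you would need either a pseudo-isotopy of $M$ itself from $\Id_M$ to $\Id_M$ with prescribed $\Sigma$, or a descent theorem for pseudo-isotopies that keeps track of $\Sigma$. The tools you list supply neither. If the pseudo-isotopy in the forward direction may live on a stabilisation, that implication follows immediately from the isotopy itself. The paper never uses the forward implication in any case.
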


We apply the results of Singh and Gabai in the following lemma, then proceed to complete the proof of the main theorem.

\begin{lemma}\label{lemma:hyp-of-smoothing-PI-key-prop-implies-ESSI}
    Suppose for some $m\geq 0$ that a diffeomorphism
    \[\wh{L} \colon X \# W_m \xrasmooth X \# W_m\] satisfies $\wh{L}(\Sigma_1) = \Sigma_2$. Suppose moreover that $\wh{L}$ is smoothly pseudo-isotopic to the identity via a smooth pseudo-isotopy
    \[
    \Xi \colon (X \# W_m) \times I \xrasmooth (X \# W_m) \times I,
    \]
    such that $\Sigma(\Xi) \in \im\big(i_*\colon \Wh_2(\pi_1(X_1)) \to \Wh_2(\pi)\big)$.
  Then after some number of stabilisations of $X$, the surfaces $\Sigma_1$ and $\Sigma_2$ are smoothly isotopic rel.~boundary.
\end{lemma}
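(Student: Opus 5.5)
The plan is to use \cref{thm:singh} to kill the Hatcher--Wagoner obstruction of $\Xi$ by composing with a smooth pseudo-isotopy supported in the surface exterior, and then to apply \cref{thm:PI-iff-stable-smooth-isotopy}. Write $\Sigma(\Xi)=i_*(y)$ for some $y\in\Wh_2(\pi_1(X_1))$. Apply \cref{thm:singh} with $M=X_1\# W_m$, a compact smooth $4$-manifold with $\pi_1(M)=\pi_1(X_1)$, and with the element $-y$. This gives some $N$ and a smooth pseudo-isotopy
\[
\Lambda\colon (X_1\# W_{m+N})\times I\to (X_1\# W_{m+N})\times I
\]
that is the identity on the bottom and on $\partial(X_1\# W_{m+N})\times I$, with $\Sigma(\Lambda)=-y$. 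Since $\Lambda$ is the identity on $\partial_1X_1\times I$, it extends by the identity on $\ol{\nu}\Sigma_1\times I$ to a smooth pseudo-isotopy $\wh{\Lambda}$ of $X\# W_{m+N}$. By naturality of $\Sigma$ under codimension-zero inclusions (\cite{Galvin:2025aa}*{Proposition 1.3}, which restricts to the smooth case), we get $\Sigma(\wh{\Lambda})=-i_*(y)$.

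Stabilise $\Xi$ by the identity on $W_N\times I$; this does not change $\Sigma$. Then form the composite pseudo-isotopy $\Xi\circ\wh{\Lambda}$, with the composition order arranged so that the boundary conditions match. It runs from the identity to the diffeomorphism $\wh{L}' := (\wh{L}\#\Id_{W_N})\circ\wh{\Lambda}_1$. Here $\wh{\Lambda}_1$ is the top end of $\wh{\Lambda}$, a diffeomorphism that is the identity on $\ol{\nu}\Sigma_1$. Hence $\wh{L}'(\Sigma_1)=\wh{L}(\Sigma_1)=\Sigma_2$. By additivity of $\Sigma$ under composition, $\Sigma(\Xi\circ\wh{\Lambda})=i_*(y)-i_*(y)=0$.

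Now apply \cref{thm:PI-iff-stable-smooth-isotopy} to $\wh{L}'$ on $X\# W_{m+N}$. Orientation preservation follows because $\wh{L}'$ is pseudo-isotopic to the identity. The theorem shows that $\wh{L}'$ is smoothly isotopic rel.\ boundary to the identity after some further stabilisation $X\#W_{m+N+r}$. Provided the stabilising balls are chosen away from $\Sigma_1$ and $\Sigma_2$, the inverse of this isotopy, applied to $\Sigma_2=\wh{L}'\#\Id(\Sigma_1)$, is a smooth isotopy rel.\ boundary from $\Sigma_2$ to $\Sigma_1$ in $X\# W_{m+N+r}$.

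I expect the main obstacle to be bookkeeping rather than anything conceptual. Two points need checking: that the smooth Hatcher--Wagoner invariant is additive under composition of pseudo-isotopies, and that it is natural under extension by the identity along codimension-zero inclusions (equivalently, the same statements for $\Sigma^{\Top}$ combined with \cite{Galvin:2025aa}*{Theorem 1.1}). One also has to check that the identification $\Wh_2(\pi_1(X_1\# W))=\Wh_2(\pi_1(X_1))$ is compatible with $i_*$. A second point is that \cref{thm:PI-iff-stable-smooth-isotopy} allows the stabilisation discs to be chosen disjoint from the surfaces. If this is not clear, isotope the connected-sum balls off $\Sigma_1\cup\Sigma_2$ beforehand, using that the ambient isotopy of the identity may be taken to fix a small ball.
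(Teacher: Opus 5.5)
Your proposal is correct and is essentially the paper's proof. You realise the negative of a preimage of $\Sigma(\Xi)$ in $\Wh_2(\pi_1(X_1))$ by a smooth pseudo-isotopy of the stabilised exterior via \cref{thm:singh}, extend it by the identity over $\ol{\nu}\Sigma_1\times I$, compose with the stabilised $\Xi$ to obtain vanishing $\Sigma$, and conclude with \cref{thm:PI-iff-stable-smooth-isotopy}, noting that the top end still sends $\Sigma_1$ to $\Sigma_2$. Your extra remarks about keeping the stabilising balls away from the surfaces and reading off the surface isotopy from the diffeomorphism isotopy are harmless elaborations of points the paper leaves implicit.
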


\begin{proof}
    Write $y:=\Sigma(\Xi)$ and choose $\ol{y}\in \Wh_2(\pi_1(X_1))$ in the preimage of $-y \in \Wh_2(\pi)$. 
By~\cref{thm:singh}, there exists $n\geq m$ and a smooth pseudo-isotopy
\[
\Lambda\colon (X_1 \# W_n) \times I \xrasmooth (X_1 \# W_n) \times I,
\]
restricting to the identity on $(X_1\#W_n)\times\{0\}$ and $(\partial X_1\#W_n)\times I$,
with $\Sigma(\Lambda)=\ol{y}$. Denote by~$\wh{\Lambda}\colon (X_1 \# W_n) \times I \to (X_1 \# W_n)\times I$ the extension of $\Lambda$ over $(\overline{\nu}\Sigma_1)\times I $ by the identity map. Stabilise $\Xi$ with $\Id_{W_{n-m} \times I}$ to obtain a stabilised smooth pseudo-isotopy, which we continue to denote by  
$\Xi$, now as a diffeomorphism $\Xi\colon (X \# W_n) \times I \to (X \# W_n) \times I$. 
We compute that
\[\Sigma(\Xi \circ \wh{\Lambda}) = \Sigma(\Xi) + \Sigma(\wh{\Lambda}) = y -y=0,
\]
where the first equality uses that $\Sigma$ is a homomorphism and the second is due to the equality~$\Sigma(\wh{\Lambda})=i_*(\Sigma(\Lambda))$ which follows from the definition of the Hatcher--Wagoner obstruction, as~$\wh{\Lambda}$ restricts to an isotopy outside of~$X_1\# W_n$ (indeed, the product isotopy).  

By \cref{thm:PI-iff-stable-smooth-isotopy}, it follows that \[ (\Xi \circ \wh{\Lambda})|_{(X \# W_n) \times \{1\}} =: (\Xi \circ \wh{\Lambda})_1 = 
\wh{L} \circ \wh{\Lambda}_1\] 
is smoothly stably isotopic rel.~boundary to the identity. Also note that both $\wh{\Lambda}_1 \colon X_1 \to X_1$ and $\wh{L} \colon X_1 \to X_2$  are obtained from filling in the surface tubular neighbourhoods, and so $\wh{L} \circ \wh{\Lambda}_1$ sends $\Sigma_1$ to $\Sigma_2$. This completes the proof of the lemma.
\end{proof}

\begin{proof}[Proof of \cref{thm:main}]
Assume that $\Sigma_1$ and $\Sigma_2$ are topologically isotopic rel.\ boundary. Using~\cref{lem:make-homeo-smooth-on-surface-neighbourhood}, we may assume there exists a homeomorphism $\widehat{F}\colon X\to X$ such that the restriction $\wh{F}|_{\ol{\nu}\Sigma_1}\colon \ol{\nu}\Sigma_1\to \ol{\nu}\Sigma_2$ is a diffeomorphism sending $\Sigma_1$ to $\Sigma_2$ and such that $\wh{F}$ is topologically isotopic rel.~boundary to the identity map~$\Id_X$. By~\cref{lem:wh-G-top-PI-to-Id}, there exists $k \geq 0$ and a diffeomorphism 
\[
\wh{G} \colon X\# W_k \xrasmooth X\# W_k
\] 
that sends $\Sigma_1$ to~$\Sigma_2$ and is topologically pseudo-isotopic to $\Id_{X\# W_k}$. Choosing such a topological pseudo-isotopy~$\Psi$, by~\cref{lem:sigma-comes-from-exterior}, we have that $\Sigma^{\Top}(\Psi) \in \im\big(i_*\colon \Wh_2(\pi_1(X_1)) \to \Wh_2(\pi)\big)$.

Next, by \cref{prop:aim-of-smoothing-PI-section}, assuming the hypotheses of \cref{thm:main}, we obtain $m \geq k$ and a diffeomorphism 
\[\wh{L} \colon X\# W_m \xrasmooth X\# W_m\] that sends $\Sigma_1$ to~$\Sigma_2$ and is smoothly pseudo-isotopic to $\Id_{X\# W_m}$, via a smooth pseudo-isotopy~$\Xi$ such that $\Sigma(\Xi) \in \im\big(i_*\colon \Wh_2(\pi_1(X_1)) \to \Wh_2(\pi)\big)$.
These are exactly the hypotheses of \cref{lemma:hyp-of-smoothing-PI-key-prop-implies-ESSI}, and so that lemma implies that after some number of stabilisations of $X$, the surfaces $\Sigma_1$ and $\Sigma_2$ are smoothly isotopic rel.~boundary.
\end{proof}

\appendix

\section{The case of simply-connected $X$ and simply-connected complement}
\label{sec:appendix}

The case of Theorem~\ref{thm:main} when $X$ is simply-connected was first proved in~\cite{GalvinCS}*{Theorem~1.2}. Specialising even more, to the case when moreover the surface complements are simply-connected, we are able to give an alternative proof based on work of Gompf~\cite{Gompf-stable}, Boyer~\cite{Boyer93}, 
Saeki~\cite{Saeki}, and Quinn~\cite{Quinn:isotopy} (with the correction in~\cite{GGHKP}).

\begin{proposition}
\label{prop:saekiproof}
  Let $X$ be a smooth, orientable, closed, simply-connected $4$-manifold.
    Suppose~$\Sigma_1, \Sigma_2\subseteq X$ are smooth, orientable, closed surfaces, each with simply-connected complements. Suppose that~$\Sigma_1$ and $\Sigma_2$ are topologically isotopic. 
    Then $\Sigma_1$ and $\Sigma_2$ are smoothly isotopic in some stabilisation of $X$.
    \end{proposition}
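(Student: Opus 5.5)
The plan is to reduce the statement to a stable smooth isotopy question about a diffeomorphism of $X$ carrying $\Sigma_1$ to $\Sigma_2$, bypassing the Casson--Sullivan machinery. The only inputs are a classification of simply-connected $4$-manifolds with boundary and stable isotopy results for diffeomorphisms of simply-connected closed $4$-manifolds.

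\textbf{Step 1: the exteriors are stably diffeomorphic.} Since $\Sigma_1$ and $\Sigma_2$ are topologically isotopic, there is a homeomorphism $\wh{F}\colon X\to X$, topologically isotopic to $\Id_X$, with $\wh{F}(\Sigma_1)=\Sigma_2$. By \cref{lem:make-homeo-smooth-on-surface-neighbourhood} we may assume $\wh{F}$ is smooth near $\Sigma_1$. The restriction $F\colon X_1\to X_2$ is then a homeomorphism between simply-connected smooth $4$-manifolds that is a diffeomorphism on the boundary. Because the exteriors are simply connected, $\cs(F)\in H^3(X_1,\partial X_1;\Z/2)\cong H_1(X_1;\Z/2)=0$. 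By \cref{prop:key-property-of-CS}, after stabilising, $F\#\Id_{W_k}$ is topologically pseudo-isotopic rel.\ boundary to a diffeomorphism $G\colon X_1\# W_k\to X_2\# W_k$ agreeing with $F$ on $\partial X_1$. One could instead get such a $G$ from Gompf's stable diffeomorphism result in the relative form, using that the boundary identification is smooth.

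\textbf{Step 2: the filled-in diffeomorphism is topologically isotopic to the identity.} Fill in to obtain a diffeomorphism $\wh{G}\colon X\# W_k\to X\# W_k$ sending $\Sigma_1$ to $\Sigma_2$, as in \cref{remark:convention-on-hats}. Since $G$ is topologically pseudo-isotopic to $F\#\Id$ and $\wh{F}$ is topologically isotopic to $\Id$, $\wh{G}$ induces the identity on $H_2(X\#W_k)$. Here I would invoke Boyer's classification of simply-connected $4$-manifolds with boundary to control the realisation and uniqueness of $G$ relative to the boundary. Alternatively, one could note directly that $\wh{G}$ is topologically pseudo-isotopic to $\wh{F}\#\Id$, hence to the identity. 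Quinn (with the correction in \cite{GGHKP}), together with Perron, then upgrades this to a topological isotopy.

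\textbf{Step 3: upgrade to a stable smooth isotopy.} Now $\wh{G}$ is a diffeomorphism of a closed simply-connected manifold acting trivially on $H_2$. By Saeki's extension of Kreck's theorem, it is smoothly pseudo-isotopic to the identity after a stabilisation. Quinn's theorem for simply-connected manifolds, where $\Wh_2(1)=0$, converts this pseudo-isotopy into a smooth isotopy, possibly after a further stabilisation. Let $\Phi_t$ be this smooth isotopy from $\Id$ to $\wh{G}\#\Id$ on $X\# W_m$. Then $\Phi_t(\Sigma_1)$ is a smooth isotopy from $\Sigma_1$ to $\Sigma_2$.

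\textbf{Main obstacle.} The delicate point is Step 2, specifically ensuring that the diffeomorphism $G$ of the exteriors, once filled in, really acts trivially on $H_2$ of the stabilised $X$. The homology of $X\#W_k$ is not generated by the exterior alone. A careless choice of $G$, say one from an abstract stable diffeomorphism that is not controlled rel.\ boundary, could act nontrivially on the classes dual to the surfaces or on the added $W_k$ summands. This is exactly where Boyer's relative classification and Saeki's rel.\ boundary statement are needed, rather than Perron and Quinn alone. My first attempt would be to keep $G$ topologically pseudo-isotopic to $F\#\Id$ rel.\ boundary, as \cref{prop:key-property-of-CS} provides, so that the homology action is inherited from $\wh{F}$, and hence is the identity.
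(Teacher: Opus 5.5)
Your argument is correct, but it takes a different route from the paper's proof of this proposition.

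\textbf{The paper's route.} The paper takes $G$ from Gompf's stable diffeomorphism theorem, which gives no control over the action on $H_2$. It then uses Saeki's realisation theorem to obtain a diffeomorphism $J$ of $X_2\#W_k$ rel.\ boundary with the same Poincar\'e variation as $F\circ G^{-1}$. Boyer's argument, via $E=\{x\mid x\cdot[\Sigma_1]=0\}$, then shows that the filled-in $\wh{K}=\wh{J}\circ\wh{G}$ acts as the identity on $H_2(X\#W_k)$. Quinn/Gabai finish.

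\textbf{Your route.} You observe that $\cs(F)\in H^3(X_1,\partial X_1;\Z/2)\cong H_1(X_1;\Z/2)=0$. So \cref{prop:key-property-of-CS} gives $G$ topologically pseudo-isotopic rel.\ boundary to $F\#\Id_{W_k}$. Filling that pseudo-isotopy in with $\wh{F}|_{\ol{\nu}\Sigma_1}\times\Id_I$ shows $\wh{G}$ is homotopic to $\wh{F}\#\Id_{W_k}$, so $\wh{G}_*=\Id$. Kreck plus Quinn/GGHKP (or \cref{thm:PI-iff-stable-smooth-isotopy} with $\Wh_2(1)=0$) then finish. This is essentially the paper's main-body strategy in the special case where the Casson--Sullivan invariant vanishes for free. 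No circle sum with Lee's homeomorphism and no Orson--Powell isotopy are needed, and Kreck's theorem replaces the $\KS$ and $\Wh_2$ analysis.

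\textbf{What each buys.} Your route gets homological control at no cost and dispenses with Saeki and Boyer entirely. It also only uses $H_1(X_1;\Z/2)=0$ rather than $\pi_1(X_1)=1$. The paper's route avoids the relative stable smoothing theory for homeomorphisms (Freedman--Quinn~8.6) altogether. That is the point of the appendix: to pin down which classical inputs an argument of the kind sketched in AKMR/AKMRS actually needs.

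\textbf{Minor points in your write-up.} The appeals to Boyer's classification and to Perron--Quinn in Step~2 are superfluous, since only $\wh{G}_*=\Id$ is used afterwards. Kreck's theorem alone suffices in Step~3, because $X\#W_k$ is closed. Finally, you are not bypassing the Casson--Sullivan machinery, only its realisation step.
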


    \begin{proof}
        Using~\cref{lem:make-homeo-smooth-on-surface-neighbourhood}, we may assume there exists a homeomorphism $\widehat{F}\colon X\to X$ such that the restriction $\wh{F}|_{\ol{\nu}\Sigma_1}\colon \ol{\nu}\Sigma_1\to \ol{\nu}\Sigma_2$ is a diffeomorphism sending $\Sigma_1$ to $\Sigma_2$ and such that $\wh{F}$ is topologically isotopic rel.~boundary to the identity map~$\Id_X$. The restriction of $\wh{F}$ to the surface exteriors is a homeomorphism, which we denote $F\colon X_1\to X_2$.
By Gompf's theorem~\cite{Gompf-stable}, there exists a~$k\geq0$ and a diffeomorphism $G\colon X_1\#W_k\to X_2\#W_k$ that agrees with $F$ on the boundary (Gompf's theorem is not stated rel.~boundary, but from his proof it is clear this conclusion is available). Write $\wh{G}\colon X\to X$ for the diffeomorphism obtained by extending $G$ over the tubular neighbourhoods of the surfaces, using the diffeomorphism $\wh{F}|_{\ol{\nu}\Sigma_1}\colon \ol{\nu}\Sigma_1\to \ol{\nu}\Sigma_2$.

In general, a homeomorphism of a compact $4$-manifold $(f,\Id_{\partial M}) \colon (M,\partial M)\to (M,\partial M)$ determines what is called in~\cite{Orson-Powell}*{\textsection2.2} a \emph{Poincar\'{e} variation}. This is a certain homomorphism
\[
\Delta_f\colon H_2(M,\partial M)\to H_2(M)
\]
enjoying favourable interactions with Poincar\'{e} duality and algebraic properties; see~\cite{Saeki} and~\cite{Orson-Powell} for more details. We will only need the property that a Poincar\'{e} variation recovers the ordinary map on absolute homology as
\[
f_*=(\Id-\Delta_f\circ j)\colon H_2(M)\xrightarrow{\cong} H_2(M),
\]
where $j$ is the map in the long exact sequence of the pair.
Consider the Poincar\'{e} variation
\begin{equation}
    \label{eq:variation}
\Delta_{F\circ G^{-1}}\colon H_2(X_2,\partial X_2)\to H_2(X_2)
\end{equation}
induced by
$(F\circ G^{-1}, \Id_{\partial X_2})\colon (X_2,\partial X_2)\to (X_2,\partial X_2)$.
By~\cite{Saeki}*{Theorem~3.7}, there exists~$k\geq 0$ and a diffeomorphism $J\colon X_2\#W_k\to X_2\#W_k$ with $J|_{\partial{X_2\#W_k}}=\Id_{\partial X_2\#W_k}$ realising the variation~\eqref{eq:variation} smoothly stably
\[
\Delta_J=\Delta_{(F\circ G^{-1})\#\Id_{W_k}}=\Delta_{(F\#\Id_{W_k})\circ( G^{-1}\#\Id_{W_k})}.
\]
In particular, we have that $J$ induces the homomorphism
\[
(F\#\Id_{W_k})_*\circ( G\#\Id_{W_k})^{-1}_*\colon H_2(X_2\#W_k)\to H_2(X_2\#W_k).
\]
Write $\widehat{J}\colon X\#W_k\to X\#W_k$ for the extension of $J$ by the identity.

We now have a diffeomorphism
\[
\wh{K}=\wh{J}\circ\wh{G}\colon X\# W_k\xrasmooth X\# W_k.
\]
that agrees agrees with $\wh{F}$ on restriction to $\overline{\nu}\Sigma_1$. Moreover, by construction, we have
\begin{equation}
    \label{eq:remember}
\wh{K}_*=(\wh{F}\#\Id_{W_k})_*\colon H_2(X \# W_k)\to H_2(X\# W_k).
\end{equation}

Components of the argument made  in Boyer~\cite{Boyer93}*{\textsection 4} now suffice to show $\wh{K}$ induces the identity map on~$H_2(X\# W_k)$. We sketch the argument for the convenience of the reader. Set $E:=\{x\in H_2(X\# W_k)\,|\,x\cdot[\Sigma_1]=0\}$. By ~\cite{Boyer93}*{Lemma~4.1}, we have $E=\im(\varphi_j)$ for $j=1,2$, where $\varphi_j\colon H_2(X_j\# W_k)\to H_2(X\# W_k)$ is the inclusion-induced map. We reproduce part of~\cite{Boyer93}*{Figure~4.2}, below, which is commutative diagram where the horizontal sequences are exact. Note, Boyer is using the diagram to construct a homeomorphism of the exteriors extending the map on the boundary, whereas we already have one, so the diagrams look a little different, but do agree.
\[
\begin{tikzcd}
0 \ar[r]
& H_3(X\# W_k,X_1\# W_k) \ar[r]\ar[ddd,"\wh{K}_*"', bend right=80]
& H_2(X_1\# W_k)\ar[r, "\varphi_1"] \ar[ddd,"\wh{K}_*"']
& E \ar[r] \ar[ddd, "\wh{K}_*"']
& 0
\\
& H_3(\overline{\nu}\Sigma_1,\partial (X_1\# W_k))\ar[u,"\cong"']\ar[d, "(\wh{F}\#\Id_{W_k})_*"']
&&&
\\
&H_3(\overline{\nu}\Sigma_2,\partial (X_2\# W_k))\ar[d,"\cong"]
&&&
\\
0 \ar[r]
& H_3(X\# W_k,X_2\# W_k) \ar[r]
& H_2(X_2\# W_k)\ar[r, "\varphi_2"]
& E \ar[r]
& 0
\end{tikzcd}
\]
A similar diagram could be produced, using the homeomorphism $\wh{F}\# \Id_{W_k}$ in place of $\wh{K}$. Using~\eqref{eq:remember}, we observe the left and central columns of such a diagram would then have the same maps as the left and central columns in our diagram. In a map of short exact sequences, the left and central vertical maps uniquely determine the right vertical map. Hence we conclude $\wh{K}$ and $\wh{F}\# \Id_{W_k}$ induce the same map on~$E$. But $\wh{F}$ is topologically isotopic to the identity, and hence so is $\wh{F}\# \Id_{W_k}$. Thus the right vertical map in the diagram above is $\Id_E$.

In the proof of~\cite{Boyer93}*{Lemma~4.5}, Boyer argues that the fact  $\wh{K}_*\colon H_2(X\# W_k)\to H_2(X\# W_k)$ is the identity map on $[\Sigma_1]$ and on $E$ is enough to conclude it is the identity map overall. Briefly, in the case that the homology class $[\Sigma_i]$ has nontrivial self-intersection, this follows because $[\Sigma]$ and $E$ rationally generate $H_2(X\# W_k)$. In the case of trivial self-intersection $H_2(X\# W_k)$ is integrally generated by $E$ and an algebraic dual to $[\Sigma_1]$, which Boyer argues is enough for the conclusion. We refer the reader to~\cite{Boyer93}*{Lemma~4.5} for full details of the argument.
 
By Quinn's theorem~\cite{Quinn:isotopy} (with the correction in~\cite{GGHKP}), or alternatively \cref{thm:PI-iff-stable-smooth-isotopy}, we thus have, possibly after further stabilisations of $X\#W_k$, that~$\widehat{K}$ is smoothly isotopic to the identity.
\end{proof}

\begin{remark}
\label{rem:AKMR}
In the introduction of~\cite{AKMR}, it was asserted that the combination of work Wall~\cite{MR163323}, Perron~\cite{Perron} and Quinn~\cite{Quinn:isotopy} would show homologous $2$-spheres with simply-connected complement are topologically isotopic, and become smoothly isotopic in some stabilisation. 
In~\cite{AKMRS} it was asserted that the result of~\cref{prop:saekiproof} would follow from Perron~\cite{Perron} and Quinn~\cite{Quinn:isotopy}.  We believe that in fact citations to Boyer and Saeki are also needed to make these arguments, as we explain next.

The statement that homologous surfaces with simply-connected complements in a simply-connected $4$-manifold are topologically isotopic is a result due to Boyer~\cite{Boyer93}*{Theorem~F}. In the proof, Boyer indeed appeals to the Perron--Quinn result that homeomorphisms of closed simply-connected 4-manifolds inducing the identity on second homology are topologically isotopic to the identity, however there is much work to be done before he can invoke that result.

For the statements that topological isotopy implies smooth stable isotopy, we think both implied proofs were intended to follow the structure of the proof we gave in~\cref{prop:saekiproof}. We suspect Wall's theorem~\cite{MR163323} was intended to be used at the place where we invoked Saeki~\cite{Saeki}. Wall's theorem is insufficient to modify a diffeomorphism of a manifold with boundary, so this theorem cannot be applied in that way. Alternatively, one might try to apply Wall's theorem after the surfaces have been filled back in, but this might void the carefully arranged condition that the diffeomorphism of the stabilised manifold sends $\Sigma_1$ to $\Sigma_2$.  So Saeki's generalisation of Wall's theorem to the case of nonempty boundary seems to be necessary. 
\end{remark}

\def\MR#1{}
\bibliography{ESSI-bib}

@article{Stong-Wang,
 author = {Stong, Richard and Wang, Zhenghan},
 title = {Self-homeomorphisms of 4-manifolds with fundamental group {{\(\mathbb{Z}\)}}},
 fjournal = {Topology and its Applications},
 journal = {Topology Appl.},
 issn = {0166-8641},
 volume = {106},
 number = {1},
 pages = {49--56},
 year = {2000}}

@unpublished{galvin_2024,
Author = {Galvin, Daniel A. P.},
Year = {2024},
Title = {Ronnie {L}ee's generator for {{\(L_5(\mathbb{Z}[\mathbb{Z}])\)}}},
note = {Preprint, available on author's website at \url{https://dapgalvin.github.io/ronnie_lee_note.pdf}}
}

@unpublished{Galvin:2025aa,
	author = {Daniel Galvin and Isacco Nonino},
	note = {Preprint, available at ar{X}iv:2506.11905},
	title = {Pseudo-isotopy versus isotopy for homeomorphisms of 4-manifolds},
	url = {https://arxiv.org/pdf/2506.11905.pdf},
	year = {2025}}

@unpublished{OPRW,
	author = {Orson, Patrick and Powell, Mark and Randal-Williams, Oscar},
	note = {Preprint, available at ar{X}iv:2507.16984},
	title = {Smoothing topological pseudo-isotopies of 4-manifolds},
	url = {https://arxiv.org/pdf/2507.16984.pdf},
	year = {2025}}

@article {Saeki,
    AUTHOR = {Saeki, Osamu},
     TITLE = {Stable mapping class groups of 4-manifolds with boundary},
   JOURNAL = {Trans. Amer. Math. Soc.},
  FJOURNAL = {Transactions of the American Mathematical Society},
    VOLUME = {358},
      YEAR = {2006},
    NUMBER = {5},
     PAGES = {2091--2104},
}

@article {AKMR,
    AUTHOR = {Auckly, Dave and Kim, Hee Jung and Melvin, Paul and Ruberman,
              Daniel},
     TITLE = {Stable isotopy in four dimensions},
   JOURNAL = {J. Lond. Math. Soc. (2)},
  FJOURNAL = {Journal of the London Mathematical Society. Second Series},
    VOLUME = {91},
      YEAR = {2015},
    NUMBER = {2},
     PAGES = {439--463}
}

@article {AKMRS,
    AUTHOR = {Auckly, Dave and Kim, Hee Jung and Melvin, Paul and Ruberman,
              Daniel and Schwartz, Hannah},
     TITLE = {Isotopy of surfaces in 4-manifolds after a single
              stabilization},
   JOURNAL = {Adv. Math.},
  FJOURNAL = {Advances in Mathematics},
    VOLUME = {341},
      YEAR = {2019},
     PAGES = {609--615},
      ISSN = {0001-8708,1090-2082},
   MRCLASS = {57R52},
  MRNUMBER = {3873547},
MRREVIEWER = {Sergey\ M.\ Finashin},
       DOI = {10.1016/j.aim.2018.10.040},
       URL = {https://doi.org/10.1016/j.aim.2018.10.040},
}

@article{Gompf-stable,
	author = {Gompf, Robert E.},
	fjournal = {Topology and its Applications},
	journal = {Topology Appl.},
	number = {2-3},
	pages = {115--120},
	title = {Stable diffeomorphism of compact {$4$}-manifolds},
	volume = {18},
	year = {1984}}

@article{chakim-lightbulb,
	author = {Jae Choon Cha and Byeorhi Kim},
	journal = {J. Eur. Math. Soc., published online first},
	title = {Light bulb smoothing for topological surfaces in {4}-manifolds},
    doi = {10.4171/JEMS/1731},
	year = {2025}}

@article{GalvinCS,
	author = {Daniel A. P. Galvin},
	journal = {Arxiv 2405.07928},
	title = {The {C}asson-{S}ullivan invariant for homeomorphisms of {4}-manifolds},
	year = {2024}}

@article{MR163324,
	author = {Wall, C. Terence C.},
	doi = {10.1112/jlms/s1-39.1.141},
	fjournal = {The Journal of the London Mathematical Society},
	issn = {0024-6107},
	journal = {J. London Math. Soc.},
	mrclass = {57.10},
	mrnumber = {163324},
	mrreviewer = {N. Kuiper},
	pages = {141--149},
	title = {On simply-connected {$4$}-manifolds},
	url = {https://doi.org/10.1112/jlms/s1-39.1.141},
	volume = {39},
	year = {1964}}

@article{MR163323,
	author = {Wall, C. Terence C.},
	doi = {10.1112/jlms/s1-39.1.131},
	fjournal = {The Journal of the London Mathematical Society},
	issn = {0024-6107},
	journal = {J. London Math. Soc.},
	mrclass = {57.20},
	mrnumber = {163323},
	mrreviewer = {N. Kuiper},
	pages = {131--140},
	title = {Diffeomorphisms of {$4$}-manifolds},
	url = {https://doi.org/10.1112/jlms/s1-39.1.131},
	volume = {39},
	year = {1964}}

@incollection {MR1434103,
    AUTHOR = {Sullivan, D. P.},
     TITLE = {Triangulating and smoothing homotopy equivalences and
              homeomorphisms. {G}eometric {T}opology {S}eminar {N}otes},
 BOOKTITLE = {The {H}auptvermutung book},
    SERIES = {$K$-Monogr. Math.},
    VOLUME = {1},
     PAGES = {69--103},
 PUBLISHER = {Kluwer Acad. Publ., Dordrecht},
      YEAR = {1996},
      ISBN = {0-7923-4174-0},
   MRCLASS = {57Q15 (57Q25 57R10)},
  MRNUMBER = {1434103},
MRREVIEWER = {Oliver\ Attie},
       DOI = {10.1007/978-94-017-3343-4\{_}3}

@unpublished{KK24,
	author = {Manuel Krannich and Alexander Kupers},
	note = {Preprint, available at ar{X}iv:2402.16167},
	title = {A note on homotopy and pseudoisotopy of diffeomorphisms of 4-manifolds},
	year = {2024}}

@unpublished{GGHKP,
	author = {David Gabai and David {T.} Gay and Daniel Hartman and Vyachslav Krushkal and Mark Powell},
	note = {Preprint, available at ar{X}iv:2311.11196},
	title = {Pseudo-isotopies of simply connected 4-manifolds},
	year = {2023}}

@article{Boyer93,
	author = {Boyer, Steven},
	fjournal = {Commentarii Mathematici Helvetici},
	issn = {0010-2571},
	journal = {Comment. Math. Helv.},
	number = {1},
	pages = {20--47},
	title = {Realization of simply-connected 4-manifolds with a given boundary},
	volume = {68},
	year = {1993}}

@unpublished{Gabai-22,
	author = {David Gabai},
	note = {Preprint, available at ar{X}iv:2212.02004},
	title = {$3$-Spheres in the $4$-Sphere and Pseudo-Isotopies of ${S}^1 \times {S}^3$},
	year = {2022}}

@article {Orson-Powell,
    AUTHOR = {Orson, Patrick and Powell, Mark},
     TITLE = {Mapping class groups of simply connected 4-manifolds with
              boundary},
   JOURNAL = {J. Differential Geom.},
  FJOURNAL = {Journal of Differential Geometry},
    VOLUME = {131},
      YEAR = {2025},
    NUMBER = {1},
     PAGES = {199--275},
      ISSN = {0022-040X},
   MRCLASS = {57},
  MRNUMBER = {4947553},
       DOI = {10.4310/jdg/1755544135},
       URL = {https://doi.org/10.4310/jdg/1755544135},
}

@article{Singh,
 author = {Singh, Oliver},
 title = {Pseudo-isotopies and diffeomorphisms of 4-manifolds},
 fjournal = {Journal of Topology},
 journal = {J. Topol.},
 issn = {1753-8416},
 volume = {18},
 number = {4},
 pages = {61},
 note = {Id/No e70043},
 year = {2025}}

@article{Quinn:isotopy,
	author = {Quinn, Frank},
	fjournal = {Journal of Differential Geometry},
	journal = {J. Differential Geom.},
	number = {3},
	pages = {343--372},
	title = {Isotopy of {$4$}-manifolds},
	volume = {24},
	year = {1986}}

@book{Kirby-Siebenmann:1977-1,
	address = {Princeton, N.J.},
	author = {Kirby, Robion C. and Siebenmann, Laurence C.},
	note = {With notes by John Milnor and Michael Atiyah, Annals of Mathematics Studies, No. 88},
	pages = {vii+355},
	publisher = {Princeton University Press},
	title = {Foundational essays on topological manifolds, smoothings, and triangulations},
	year = {1977}}

@article{F,
	author = {Freedman, Michael},
	fjournal = {Journal of Differential Geometry},
	journal = {J. Differential Geom.},
	number = {3},
	pages = {357--453},
	title = {The topology of four-dimensional manifolds},
	volume = {17},
	year = {1982}}

@incollection {MR1434102,
    AUTHOR = {Casson, Andrew J.},
     TITLE = {Generalisations and applications of block bundles},
 BOOKTITLE = {The {H}auptvermutung book},
    SERIES = {$K$-Monogr. Math.},
    VOLUME = {1},
     PAGES = {33--67},
 PUBLISHER = {Kluwer Acad. Publ., Dordrecht},
      YEAR = {1996},
      ISBN = {0-7923-4174-0},
   MRCLASS = {57Q50},
  MRNUMBER = {1434102},
MRREVIEWER = {Oliver\ Attie},
       DOI = {10.1007/978-94-017-3343-4\{_}2}

@book{FQ,
	author = {Freedman, Michael and Quinn, Frank},
	pages = {viii+259},
	place = {Princeton, NJ},
	publisher = {Princeton University Press},
	series = {Princeton Mathematical Series},
	title = {Topology of $4$-manifolds},
	volume = {39},
	year = {1990}}

@book{Br93,
	author = {Bredon, Glen E.},
	isbn = {0-387-97926-3},
	mrclass = {55-01 (54-01 57-01)},
	mrnumber = {1700700},
	note = {Corrected third printing of the 1993 original},
	pages = {xiv+557},
	publisher = {Springer-Verlag, New York},
	series = {Graduate Texts in Mathematics},
	title = {Topology and geometry},
	volume = {139},
	year = {1997}}

@article{Perron,
	author = {Perron, Bernard},
	fjournal = {Topology. An International Journal of Mathematics},
	journal = {Topology},
	number = {4},
	pages = {381--397},
	title = {Pseudo-isotopies et isotopies en dimension quatre dans la cat\'{e}gorie topologique},
	volume = {25},
	year = {1986}}

@inproceedings{Kreck-isotopy-classes,
	author = {Kreck, Matthias},
	booktitle = {Algebraic topology, {A}arhus 1978 ({P}roc. {S}ympos., {U}niv. {A}arhus)},
	pages = {643--663},
	publisher = {Springer, Berlin},
	series = {Lecture Notes in Math.},
	title = {Isotopy classes of diffeomorphisms of {$(k-1)$}-connected almost-parallelizable {$2k$}-manifolds},
	volume = {763},
	year = {1979}}

\end{document}